    \tikzset{point/.style={circle,inner sep=0pt,minimum size=3pt,fill=red}}
\theoremstyle{plain}
\newtheorem{lem}{Lemma}[section]
\newtheorem{prop}[lem]{Proposition}
\newtheorem{thm}[lem]{Theorem}
\newtheorem{cor}[lem]{Corollary}
\newtheorem{rem}{Remark}
\theoremstyle{definition}
\newtheorem{defn}[lem]{Definition}
\newtheorem{ex}[lem]{Example}
\theoremstyle{remark}
\theoremstyle{Theorem A}
\theoremstyle{Theorem B}
\theoremstyle{Theorem C}
\theoremstyle{Theorem D}
\theoremstyle{Theorem E}
\newtheorem*{thmA}{Theorem A}
\newtheorem*{thmB}{Theorem B}
\newtheorem*{thmC}{Theorem C}
\newtheorem*{thmD}{Theorem D}
\numberwithin{equation}{section}
\DeclareMathOperator{\ext}{ext}
\DeclareMathOperator{\diam}{diam}
\DeclareMathOperator{\Eq}{Eq}
\DeclareMathOperator{\loc}{loc}
\DeclareMathOperator{\conv}{conv}
\DeclareMathOperator{\Diff}{Diff}
\newcommand{\R}{\mathbb R}
\newcommand{\Z}{\mathbb Z}
\newcommand{\N}{\mathbb N}
\renewcommand{\epsilon}{\varepsilon}
\title[Lyapunov spectrum properties ]{Lyapunov spectrum properties and continuity of the lower joint spectral radius}
\author{reza mohammadpour}
\address{Department of Mathematics, Uppsala University, Box 480, SE-75106, Uppsala, SWEDEN.}
\date{\today}
\subjclass[2010]{37A60,37D30, 37D35, 37H15, 37N40}
\keywords{Lyapunov spectrum, typical cocycles, thermodynamic formalism, lower joint spectral radius, entropy spectrum.}%
\email{reza.mohammadpour@math.uu.se}
\begin{document}

\maketitle

\begin{abstract}
In this paper we study ergodic optimization and multifractal behavior of Lyapunov exponents for matrix cocycles. We show that the restricted variational principle holds for generic cocycles (in the sense of \cite{BV1}) over mixing subshifts of finite type. We also show that the Lyapunov spectrum is equal to the closure of the set where the entropy
spectrum is positive for such cocycles. Moreover, we show the continuity of the entropy spectrum at boundary of Lyapunov spectrum  in the sense that $h_{top}(E(\alpha_{t}))\ \rightarrow h_{top}(E(\beta(\mathcal{A}))$, where $E(\alpha)=\{x\in X: \lim_{n\rightarrow \infty}\frac{1}{n}\log \|\mathcal{A}^{n}(x)\|=\alpha\}$, for such cocycles.

We prove the continuity of the lower joint spectral radius for linear cocycles under the assumption that linear cocycles satisfy a cone condition.

\end{abstract}
\tableofcontents
\section{Introduction and statement of the results}

Let $X$ be a compact metric space that is endowed with the
metric $d$. We call $(X, T)$ a \textit{topological dynamical system} (TDS), if $T: X\rightarrow X$ is a
continuous map on the compact metric space $X$.

We denote by $\mathcal{M}(X,T)$ the space of all $T$-invariant Borel probability measures on $X$. This space is a nonempty convex set and is compact with respect to the weak* topology. Moreover, we denote by $\mathcal{E}(X, T)\subset \mathcal{M}(X, T)$ the subset of ergodic measures, which are exactly the extremal points of $\mathcal{M}(X, T).$

Let $f:X\rightarrow \R$ be a continuous function. We denote by $S_{n}f(x):=\sum_{k=0}^{n-1}f(T^{k}(x))$ the \textit{Birkhoff sum}, and we call 
\begin{equation}\label{Birkhoff}
\lim_{n\rightarrow \infty} \frac{1}{n}S_{n}f(x)
\end{equation}
 a \textit{Birkhoff average}.

By Birkhoff theorem, for every $\mu \in \mathcal{M}(X, T)$ and $\mu$-almost every $x\in X$, the Birkhoff average is well-defined. We denote by  $\beta(f)$ and $\alpha(f)$ the supremum and infimum of the Birkhoff average over $x\in X$, respectively; we call these numbers the \textit{maximal and minimal ergodic averages of $f$}.

We say that $\Phi:=\{\log \phi_{n}\}_{n=1}^{\infty}$  is a \textit{subadditive potential} if each $\phi_{n}$ is a continuous positive-valued function on $X$ such that
 
 \[ 0 < \phi_{n+m}(x) \leq \phi_{n}(x) \phi_{m}(T^{n}(x)) \hspace{0,2cm} \forall x\in X, m,n \in \N.\]
 
 Furthermore,  $\Phi=\{\log\phi_{n}\}_{n=1}^{\infty}$ is said to be an \textit{almost additive potential} if there exists a constant $C \geq 1$ such that for any $m,n \in \N$, $x\in X$, we have
\[ C^{-1}\phi_{n}(x)\phi_{m}(T^{n})(x) \leq \phi_{n+m}(x)\leq C \phi_{n}(x) \phi_{m}(T^{n}(x)).\]

We also say that  $\Phi=\{\log\phi_{n}\}_{n=1}^{\infty}$ is an \textit{additive potential} if 
 \[\phi_{n+m}(x)=\phi_{n}(x)\phi_{m}(T^{n}(x)) \hspace{0,2cm} \forall x\in X, m,n \in \N;\]
in this case, $\phi_{n}(x) = e^{S_n\log \phi_1(x)}$.

If $\mu$ is an ergodic invariant probability measure, then the Birkhoff average converges to $\int f d \mu$ for $\mu$-almost all points, but there are plenty of ergodic invariant measures, for which the limit exists
but converges to a different quantity. Moreover, there are plenty of points
 which are not generic points for any ergodic measure or even for which
the Birkhoff average does not exist. So, one may ask about the size of the set of points
 \[E_{f}(\alpha)=\{x\in X : \frac{1}{n} S_{n}f(x)\rightarrow \alpha \hspace{0.2cm}\textrm{as}\hspace{0.2cm}n\rightarrow \infty \} ,\]
 which we call $\alpha$\textit{-level set of Birkhoff spectrum}, for a given value $\alpha$ from the set
 \[ L=\{\alpha \in \R: \exists x \in X \hspace{0.2cm}\textrm{and} \lim_{n\rightarrow \infty}\frac{1}{n}S_{n}f(x)=\alpha\},\] 
which we call \textit{Birkhoff spectrum.} 
 
 That size is usually
calculated in terms of topological entropy (see Subsection \ref{top_entropy}). Let $Z\subset X$, we denote by $h_{top}(T, Z)$ topological entropy of $T$ restricted to $Z$ or, simply, the topological
entropy of $Z$, denote $h_{top}(Z),$ when there is no confusion about $T$. In particular we write $h_{top}(T)$ for
$h_{top}(T,X)$. 

We investigate the end points of Birkhoff spectrum, i.e., $\alpha(f)$ and $\beta(f)$. Since $\alpha(f)=-\beta(-f)$, let us focus on the quantity $\beta$. It can also be characterized as
 \[ \beta(f)=\sup_{\mu \in \mathcal{M}(X, T)}\int f d\mu.\]
 
By the compactness of $\mathcal{M}(X, T)$, there is at least one measure $\mu \in \mathcal{M}(X, T)$ for which $\beta(f)=\int f d\mu$; such measures are called \textit{Birkhoff maximizing measures}.

It is well-known (see, e.g. \cite{Ol}, \cite{Fe03}, \cite{FFW}) when $(X, T)$ is a transitive subshift of finite type and $f$
is an additive potential, then
\[ E_{f}(\alpha)\neq \emptyset \Leftrightarrow \alpha \in \Omega :=\left\{ \int f d\mu : \mu \in \mathcal{M}(X,T)\right\},\]
 and
 \begin{align*}
 h_{top}(E_{f}(\alpha))&=\inf_{t\in \R}\{P_{f}(t)-\alpha t : t\in \R\}\\
 &=\sup\left\{h_{\mu}(T): \mu \in \mathcal{M}(X, T) \hspace{0.2cm}\textrm{with}\int f d\mu= \alpha\right\}\hspace{0.2cm}\forall \alpha\in \Omega,\stepcounter{equation}\tag{\theequation}\label{myeq1}
 \end{align*}
 where  $P_{f}(t)$ the topological pressure for a potential $tf$ (see \cite{PU}).

In the almost additive potentials case, (\ref{myeq1}) was proven by Feng and Huang \cite{FH} under certain assumptions.  In the subadditve potentials case, Feng and Huang \cite{FH} proved a similar result for $t>0$ under the upper semi continuity entropy assumption.

A natural example of subadditive potentials is matrix cocycles. More precisely, given a continuous map $\mathcal{A}:X \rightarrow GL(k, \R)$ taking values into the space $k \times k$ invertible matrices. We consider the products
 
 \[ \mathcal{A}^{n}(x)=\mathcal{A}(T^{n-1}(x)) \ldots \mathcal{A}(T(x))\mathcal{A}(x).\]
 
 The pair $(T, \mathcal{A})$ is called a \textit{linear cocycle}. That induces a skew-product dynamics $F$ on $X\times \R^{k}$ by $(x, v)\mapsto X\times \R^{k}$, whose $n$-th iterate is therefore \[(x, v)\mapsto (T^{n}(x), \mathcal{A}^{n}(x)v).\] If $T$ is invertible then so is $F$. Moreover, $F^{-n}(x)=(T^{-n}(x), \mathcal{A}^{-n}(x)v)$ for each $n\geq1$, where
\[\mathcal{A}^{-n}(x):=\mathcal{A}(T^{-n}(x))^{-1}\mathcal{A}(T^{-n+1}(x))^{-1}...\mathcal{A}(T^{-1}(x))^{-1}.\]

In general, one could consider vector bundles over $X$ instead of $X \times \R^{d}$, and then consider bundle endomorphisms that fiber over $T:X\rightarrow X.$

A simple class of linear cocycles is \textit{locally constant cocycles} which is defined as follows. Assume that $X=\{1,...,q\}^{\Z}$ is a symbolic space. Suppose that $T:X \rightarrow X$  is a shift map, i.e. $T(x_{l})_{l}=(x_{l+1})_{l}$. Given a finite set of matrices $\mathcal{A} =\{A_{1},\ldots,A_{q}\} \subset GL(k, \R)$, we define the function  $\mathcal{A}:X \rightarrow GL(k, \R)$ by $\mathcal{A}(x)=A_{x_{0}}.$ In this case, we say that$(T,\mathcal{A})$ is a locally constant cocycle.

By Kingman’s subadditive ergodic theorem, for any $\mu \in \mathcal{M}(X, T)$ and $\mu$ almost every $x\in X$ such that $\log^{+} \|\mathcal{A}\| \in L^{1}(\mu)$, the following limit, called the \textit{top Lyapunov exponent} at $x$, exists:

\begin{equation}\label{number}
\chi(x, \mathcal{A}):= \lim_{n\rightarrow \infty}\frac{1}{n}\log \|\mathcal{A}^{n}(x)\|,
\end{equation}
where $\|\mathcal{A}\|$ the Euclidean operator norm of a matrix $\mathcal{A}$, that is submultiplicative i.e.,
\[ 0< \|\mathcal{A}^{n+m}(x)\| \leq \|\mathcal{A}^{n}(x)\| \|\mathcal{A}^{m}(T^{n}(x))\| \hspace{0.2cm}\forall x\in X, m,n\in \N,\]
therefore, the potential $\Phi_{\mathcal{A}}:=\{\log \|\mathcal{A}^{n}\|\}_{n=1}^{\infty}$ is subadditive.

 Let us denote $\chi(\mu, \mathcal{A})=\int \chi(. , \mathcal{A} )d\mu.$ If the measure $\mu$ is ergodic, then $\chi(x, \mathcal{A})=\chi(\mu, \mathcal{A})$ for $\mu$-almost every $x\in X.$

Similarly to what we did for Birkhoff average (\ref{Birkhoff}), one can either maximize or minimize top Lyapunov exponent (\ref{number}); the corresponding quantities will be denoted by $\beta(\mathcal{A})$ and $\alpha(\mathcal{A})$, respectively; we call these numbers the \textit{maximal and minimal Lyapunov exponents of $\mathcal{A}$}. However, this time the maximization and the minimization problems are totally different. Even though $\beta(\mathcal{A})$ is always attained by at least one measure (which is called a \textit{Lyapunov maximizing measure}), that is not necessarily the case for the minimal Lyapunov exponent $\alpha(\mathcal{A})$. In fact, in the locally constant cocycles case, Bochi and Morris \cite{BM} investigated the continuity properties of the minimal Lyapunov exponent. They showed that $\alpha(\mathcal{A})$ is Lipschitz continuous at $\mathcal{A}$  under $1-$domination assumption. Breuillard and Sert \cite{BS} extended Bochi and Morris's result to the joint spectrum under domination condition. In this case the $\chi(\mu, \mathcal{A})$ depends continuously on the measure $\mu$.

Feng \cite{Fe03} proved (\ref{myeq1}) for continuous positive matrix-valued functions on the one side shift. He (see \cite{F}, \cite{FH}) also proved the first part (\ref{myeq1}) for locally constant cocycles under the irreducibility assumption.

 The linear cocycles generated by the derivative of a diffeomorphism map or a smooth map $T:X\rightarrow X$ on a closed Riemannian manifold $X$ and a family of maps $\mathcal{A}(x):=D_{x}T: T_{x} X\rightarrow T_{T(x)}X$ are called \textit{derivative cocycles}. Moreover, when $T:X\rightarrow X$ is an Anosov diffeomorphism (or expanding map), Bowen \cite{B} showed that there exists a symbolic coding of $T$ by a subshift of finite type. Therefore, one can replace the derivative cocycle of a uniformly hyperbolic map by a linear cocycle over a subshift of finite type. In general, we know much more about locally constant cocycles than about the more general derivative cocycles.
 
 In this paper, we are interested in linear cocycles $(T, \mathcal{A})$ generated by $GL(k, \R)-$\\valued functions $\mathcal{A}$ over two side subshifts of finite type $(\Sigma, T)$.   
 We denote by $\mathcal{L}$ the set of admissible words of $\Sigma$. For any $\mathcal{A}:\Sigma \rightarrow GL(k, \R)$ and $I\in \mathcal{L}$, we define
 \begin{equation}\label{defini1}
 \|\mathcal{A}(I)\|:=\max_{x\in [I]} \|\mathcal{A}^{|I|}(x)\|.
 \end{equation}
 
 We define a positive continuous function $\{\varphi_{\mathcal{A}, n}\}_{n\in \N}$ on $\Sigma$ such that
\[\varphi_{\mathcal{A}, n}(x):=\|\mathcal{A}^{n}(x)\|.\]
We denote by $\Phi_{\mathcal{A}}$ the subbadditive potential $\{\log \varphi_{\mathcal{A}, n}\}_{n=1}^{\infty}$.

 We say that $\mathcal{A}$ is \textit{quasi-multiplicative} if there exist $C>0$ and $m\in \N$ such that for every $I, J \in \mathcal{L}$, there exists $K\in \mathcal{L}$ with $|K|\leq m$ such that $IKJ \in \mathcal{L}$ and
\[\|\mathcal{A}(IKJ)\|\geq C \|\mathcal{A}(I)\|\|\mathcal{A}(J)\|.\]

Bonatti and Viana \cite{BV1} introduced the notion of \textit{typical
cocycles} among fiber-bunched cocycles (see Definitions \ref{typical1} and \ref{typical2} for precise formulations). We assume that $T:\Sigma \rightarrow \Sigma$ is a topologically mixing subshift of finite type. We denote by $H^{r}(\Sigma, GL(k, \R))$ the space of all $r-$H\"older continuous functions. We also denote by $H_{b}^{r}(\Sigma, GL(k, \R))$ the space of all $r-$H\"older continuous and fiber bunched functions, which says that the cocycles are nearly conformal. The set
\[\mathcal{W}:=\{\mathcal{A}\in H_{b}^{r}(\Sigma, GL(k, \R)) : \mathcal{A} \textrm{ is typical} \},\]
is open in $H_{b}^{r}(\Sigma, GL(k, \R))$, and also Bonatti and Viana \cite{BV1} proved that $\mathcal{W}$ is dense in
$H_{b}^{r}(\Sigma, GL(k, \R))$ and that its complement has infinite codimension, i.e., it is contained in finite unions of closed submanifolds with arbitrary codimension.

We denote $E(\alpha)=E_{\Phi}(\alpha)$ when there is no confusion about $\Phi.$

Our main results are Theorems A, B, C and D formulated as follows:

\begin{thmA}\label{thmA} Let $\mathcal{A}\in \mathcal{W}$. Then,
\[L=\overline{\{\alpha, \hspace{0.2cm} h_{top}(E(\alpha))>0\}}.\]
Furthermore, $\alpha \mapsto h_{top}(E(\alpha))$ is concave in $\alpha \in \mathring{L}.$
\end{thmA}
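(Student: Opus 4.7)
The plan is to reduce both claims to a restricted variational principle for the entropy spectrum of typical cocycles and to the Legendre--transform structure it carries. The crucial input from the hypothesis $\mathcal{A}\in \mathcal{W}$ is that typical cocycles are quasi-multiplicative, so the subadditive pressure $t\mapsto P(t\Phi_\mathcal{A})$ behaves as in the additive theory: it is convex, finite, continuous, and for every $t\in \R$ it admits an equilibrium state, unique by virtue of H\"older regularity combined with quasi-multiplicativity.

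First, the plan establishes the restricted variational principle
\[
h_{top}(E(\alpha)) \;=\; \sup\{h_\mu(T): \mu\in \mathcal{M}(\Sigma,T),\ \chi(\mu,\mathcal{A})=\alpha\} \;=\; \inf_{t\in \R}\{P(t\Phi_\mathcal{A})-t\alpha\}
\]
for every $\alpha\in \mathring L$. The lower bound on the middle quantity comes from the entropy distribution principle applied to the generic points of an ergodic measure $\mu$ with $\chi(\mu,\mathcal{A})=\alpha$; together with the classical variational principle $P(t\Phi_\mathcal{A})=\sup_\mu(h_\mu(T)+t\chi(\mu,\mathcal{A}))$ and convex duality this yields the identity with the infimum. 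The upper bound is the technical heart of the argument: a Bowen-ball covering of $E(\alpha)$ in which quasi-multiplicativity is used to replace products $\|\mathcal{A}(I)\|\|\mathcal{A}(J)\|$ by $\|\mathcal{A}(IKJ)\|$ and thereby to extract the exponential growth of $\sum_{|I|=n}\|\mathcal{A}(I)\|^{t}$, producing the bound $e^{n(P(t\Phi_\mathcal{A})-t\alpha)+o(n)}$ on the covering number for arbitrary $t\in \R$.

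Granting the principle, both assertions of Theorem A follow cleanly. For $L=\overline{\{\alpha: h_{top}(E(\alpha))>0\}}$, the inclusion $\supseteq$ is automatic since $h_{top}(E(\alpha))>0$ forces $E(\alpha)\neq \emptyset$ and hence $\alpha\in L$, and $L$ is closed. For the reverse inclusion, fix $\alpha\in \mathring L$; by strict convexity of the pressure there is $t\in \R$ with $\alpha=P'(t\Phi_\mathcal{A})$, and the corresponding equilibrium state $\mu_t$ satisfies $\chi(\mu_t,\mathcal{A})=\alpha$ and $h_{\mu_t}(T)=P(t\Phi_\mathcal{A})-t\alpha>0$, so $h_{top}(E(\alpha))>0$; the two boundary points of $L$ are then absorbed by passing to the closure. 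Concavity of $\alpha\mapsto h_{top}(E(\alpha))$ on $\mathring L$ is immediate from the Legendre representation, since it presents the entropy spectrum as an infimum of affine functions of $\alpha$.

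The main obstacle is the upper bound in the restricted variational principle for arbitrary $t\in \R$: the Feng--Huang result for subadditive potentials only covers $t>0$ under an upper semicontinuity hypothesis, while Theorem A needs the full range so that the Legendre transform exhausts $\mathring L$. Extending to $t\le 0$ requires using quasi-multiplicativity to control the sums $\sum_{I}\|\mathcal{A}(I)\|^{t}$ despite the loss of monotonicity in $I$, and this is precisely the step that makes genuine use of the typicality hypothesis rather than of mere subadditivity. A secondary, smaller obstacle is verifying that $t\mapsto P(t\Phi_\mathcal{A})$ is differentiable with derivative sweeping $\mathring L$, which again rests on uniqueness of equilibrium states for typical cocycles.
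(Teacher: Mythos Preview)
Your overall architecture matches the paper's: both derive concavity of $\alpha\mapsto h_{top}(E(\alpha))$ on $\mathring L$ from the Legendre representation $h_{top}(E(\alpha))=\inf_{t\in\R}\{P_{\Phi_{\mathcal A}}(t)-t\alpha\}$, which in turn rests on quasi-multiplicativity of typical cocycles. Where you diverge from the paper is in the positivity step, and there your argument has a genuine gap.

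You write that for $\alpha\in\mathring L$ one finds $t$ with $\alpha=P'(t)$ and then ``$h_{\mu_t}(T)=P(t\Phi_{\mathcal A})-t\alpha>0$''. Two issues. First, you invoke \emph{strict convexity} of the pressure, which is neither established nor needed; convexity together with $P'(\pm\infty)=\beta(\mathcal A),\alpha(\mathcal A)$ already gives some $t$ with $\alpha\in\partial P(t)$. Second, and more seriously, the strict inequality $P(t)-t\alpha>0$ is exactly the content of the theorem at this point and you give no reason for it. It is not automatic: for a convex function with $P(0)>0$ the tangent intercept $P(t)-tP'(t)$ can perfectly well vanish at some finite $t$. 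The inequality \emph{is} true here --- for instance, if $t>0$ pick any $\nu$ with $\chi(\nu,\mathcal A)>\alpha$ (possible since $\alpha<\beta(\mathcal A)$) and use $P(t)\ge h_\nu(T)+t\chi(\nu,\mathcal A)>t\alpha$, and symmetrically for $t<0$ --- but some such argument must be supplied. Note also that the paper only establishes uniqueness of equilibrium states and differentiability of $P$ for $t>0$, so routing through $\mu_t$ for negative $t$ would require additional work.

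The paper sidesteps all of this with a cleaner device: it first proves concavity (same Legendre argument as yours), observes that $h_{top}(E(\alpha))\ge 0$ via the restricted variational principle, and then argues that positivity at a \emph{single} point (obtained by contradiction: if every level set had zero entropy then every ergodic measure would have zero entropy, contradicting $h_{top}(T)>0$) propagates to all of $\mathring L$ by the elementary lemma that a nonnegative concave function positive at one point is positive throughout the interior of its domain. This avoids any analysis of equilibrium states for $t\le 0$.
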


\begin{thmB}\label{thmB}
 Suppose that $\mathcal{A}:\Sigma \rightarrow GL(k, \R)$ belongs to typical functions $\mathcal{W}$. Then,
\begin{align*}
h_{top}(E(\alpha))&=
\sup\{h_{\mu}(T) : \mu\in \mathcal{M}(\Sigma, T), \chi(\mu, \mathcal{A})=\alpha\}\\
&=\inf\{P_{\Phi_{\mathcal{A}}}(q)-\alpha. q : q\in \R \} \hspace{0.2cm}\forall \alpha \in \mathring{\Omega}\stepcounter{equation}\tag{\theequation}\label{second_eq},
\end{align*}
where $\Omega:=\{\chi(\mu, \mathcal{A}) : \mu \in \mathcal{M}(\Sigma, T)\}.$
\end{thmB}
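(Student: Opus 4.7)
The plan is to establish the three-way equality through the chain
\[
\sup\{h_\mu(T):\chi(\mu,\mathcal{A})=\alpha\}\;\le\; h_{top}(E(\alpha))\;\le\;\inf_{q\in\R}\{P_{\Phi_\mathcal{A}}(q)-\alpha q\}\;\le\;\sup\{h_\mu(T):\chi(\mu,\mathcal{A})=\alpha\},
\]
so that all three quantities coincide. The leftmost inequality and the middle covering inequality are relatively soft; the real content lies in the closing inequality, which is where the typicality hypothesis $\mathcal{A}\in\mathcal{W}$ enters in an essential way.

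For the first inequality I would restrict to ergodic measures (which does not change the supremum): if $\mu$ is ergodic with $\chi(\mu,\mathcal{A})=\alpha$, then Kingman's subadditive ergodic theorem applied to $\Phi_\mathcal{A}$ gives $\mu(E(\alpha))=1$, and hence $h_\mu(T)\le h_{top}(E(\alpha))$ by the standard variational inequality for the Bowen--Pesin--Pitskel entropy on invariant subsets of full measure. For the middle inequality, a covering argument bounds $h_{top}(E(\alpha))$: one covers $E(\alpha)$ by cylinders $[I]$ of length $n$ on which $\|\mathcal{A}(I)\|$ is close to $e^{n\alpha}$ and compares the resulting Carath\'eodory-dimension sum with the variational sum $\sum_{|I|=n}\|\mathcal{A}(I)\|^{q}$ defining $P_{\Phi_\mathcal{A}}(q)$; this produces $h_{top}(E(\alpha))\le P_{\Phi_\mathcal{A}}(q)-\alpha q$ for every $q\in\R$, and hence for the infimum.

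For the closing inequality I would invoke the thermodynamic formalism available for typical cocycles. The hypothesis $\mathcal{A}\in\mathcal{W}$ yields quasi-multiplicativity of $\Phi_\mathcal{A}$, and in this setting each subadditive potential $q\Phi_\mathcal{A}$ admits a unique ergodic equilibrium state $\mu_q$, while the pressure function $q\mapsto P_{\Phi_\mathcal{A}}(q)$ is convex and differentiable on $\R$ with $\frac{d}{dq}P_{\Phi_\mathcal{A}}(q)=\chi(\mu_q,\mathcal{A})$. Because $\alpha\in\mathring{\Omega}$, one can pick $q=q(\alpha)\in\R$ with $\chi(\mu_q,\mathcal{A})=\alpha$; the equilibrium identity $h_{\mu_q}(T)+q\,\chi(\mu_q,\mathcal{A})=P_{\Phi_\mathcal{A}}(q)$ then reads $h_{\mu_q}(T)=P_{\Phi_\mathcal{A}}(q)-\alpha q\ge\inf_{q'\in\R}\{P_{\Phi_\mathcal{A}}(q')-\alpha q'\}$, so $\mu_q$ witnesses the required inequality in the supremum over measures.

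The principal obstacle is the construction of the equilibrium state $\mu_q$ and the differentiability of $q\mapsto P_{\Phi_\mathcal{A}}(q)$ in the \emph{subadditive} setting: classical Ruelle--Bowen transfer-operator arguments rely on additivity, so one must substitute quasi-multiplicativity, supplied by typicality, for the Gibbs property. A secondary technical point is to ensure that $q\mapsto\chi(\mu_q,\mathcal{A})$ sweeps out all of $\mathring{\Omega}$ as $q$ ranges over $\R$; this is precisely why the statement is restricted to the interior of $\Omega$, where the derivative of the pressure attains $\alpha$ for a finite $q$.
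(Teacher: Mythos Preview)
Your chain of inequalities is a sound skeleton, and the first two links are standard. The gap is in the closing step. You assert that quasi-multiplicativity of $\Phi_{\mathcal{A}}$ yields, for \emph{every} $q\in\R$, a unique ergodic equilibrium state $\mu_q$ with $P_{\Phi_{\mathcal{A}}}'(q)=\chi(\mu_q,\mathcal{A})$. But the Gibbs/equilibrium-state machinery for quasi-multiplicative subadditive potentials (Theorem~\ref{uniq11} in the paper, following Feng--K\"aenm\"aki and Park) is established only for $q\in\R_{+}$: when $q<0$ the sequence $q\Phi_{\mathcal{A}}$ is \emph{super}additive rather than subadditive, and the quasi-multiplicativity inequality $\|\mathcal{A}(IKJ)\|\ge C\|\mathcal{A}(I)\|\,\|\mathcal{A}(J)\|$ flips the wrong way after raising to a negative power, so the Gibbs construction does not go through. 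As written, your argument therefore secures the closing inequality only for $\alpha$ in the range $\{P_{\Phi_{\mathcal{A}}}'(q):q\ge 0\}$, that is, for $\alpha$ between the Lyapunov exponent of the measure of maximal entropy and $\beta(\mathcal{A})$, not for all of $\mathring{\Omega}$.

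The paper avoids this obstacle by taking a different route for $\sup=\inf$: instead of producing $\mu_q$ for negative $q$, it invokes abstract Fenchel duality (Theorem~\ref{Cor}). One checks that $g(\alpha):=\sup\{h_\mu(T):\chi(\mu,\mathcal{A})=\alpha\}$ is concave on the convex set $\Omega$, that its conjugate is $q\mapsto P_{\Phi_{\mathcal{A}}}(q)$ by the variational principle (which, under quasi-multiplicativity, extends to all $q\in\R$ via Theorem~\ref{convex11} and Feng \cite{F}), and then biconjugation returns $g(\alpha)=\inf_{q\in\R}\{P_{\Phi_{\mathcal{A}}}(q)-\alpha q\}$ on $\operatorname{ri}(\Omega)=\mathring{\Omega}$. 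The remaining equality $h_{top}(E(\alpha))=\inf_{q\in\R}\{P_{\Phi_{\mathcal{A}}}(q)-\alpha q\}$ is handled separately (Theorem~\ref{concave1}, via \cite[Theorem 4.10]{FH}). This convex-analytic path needs only convexity of $P_{\Phi_{\mathcal{A}}}$ on $\R$ and the variational identity $P_{\Phi_{\mathcal{A}}}(q)=\sup_{\mu}\{h_\mu(T)+q\,\chi(\mu,\mathcal{A})\}$; it does not require differentiability or uniqueness of equilibrium states on the negative half-line, which is precisely the ingredient you are missing.
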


Barreira and Gelfert \cite{BG06} first obtained similar results for repellers of $C^{1+\alpha}$ maps satisfying
the cone condition (see Section \ref{5}) and bounded distortion. Feng and Huang \cite[Theorem 4.8]{FH} improved the result to subadditive potentials for $q \in \R_{+}$ (see \eqref{second_eq}), and then Park \cite[Corollary 6.6]{P} used their result for cocycles that are quasi-multiplicative. We extend the result for $q \in \R$ (see \eqref{second_eq}).

We show the continuity of the entropy spectrum of Lyapunov exponents for
generic matrix cocycles.

\begin{thmC}\label{thmC}
Suppose $\mathcal{A}_{l}, \mathcal{A}\in \mathcal{W}$ with $\mathcal{A}_{l} \rightarrow \mathcal{A}$, and $t_{l},t\in \R_{+}$ such that $t_{l}\rightarrow t.$
Let $\alpha_{t_{l}}=P_{\Phi_{\mathcal{A}_{l}}}^{'}(t_{l})$ and $\alpha_{t}=P_{\Phi_{\mathcal{A}}}^{'}(t)$. Then
\[\lim_{l\rightarrow \infty} h_{top}(E_{\mathcal{A}_{l}}(\alpha_{l})) =h_{top}(E_{\mathcal{A}}(\alpha)).\]
 Moreover,
\[ h_{top}(E(\alpha_{t})) \rightarrow h_{top}(E(\beta(\mathcal{A})) \hspace{0,2cm}\textrm{when}\hspace{0,2cm} t \rightarrow \infty.\]
\end{thmC}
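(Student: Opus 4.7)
The plan is to use Theorem B to rewrite the entropy spectrum as the Legendre transform of the topological pressure, and thereby reduce Theorem C to the joint continuity of the subadditive pressure and of its derivative in $t$. By Theorem B, for $\alpha \in \mathring{\Omega}$ one has
\[h_{top}(E_{\mathcal{A}}(\alpha)) = \inf_{q \in \R}\bigl\{P_{\Phi_\mathcal{A}}(q) - \alpha q\bigr\}.\]
Since $t \mapsto P_{\Phi_\mathcal{A}}(t)$ is convex and $\alpha_t = P'_{\Phi_\mathcal{A}}(t)$, the infimum is attained at $q = t$, giving the clean formula
\[h_{top}(E_\mathcal{A}(\alpha_t)) = P_{\Phi_\mathcal{A}}(t) - t\,P'_{\Phi_\mathcal{A}}(t),\]
and analogously with $\mathcal{A}_l,t_l$ in place of $\mathcal{A},t$. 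The first convergence statement then reduces to the joint continuity of $(\mathcal{A},t)\mapsto P_{\Phi_\mathcal{A}}(t)$ and of $(\mathcal{A},t)\mapsto P'_{\Phi_\mathcal{A}}(t)$ on $\mathcal{W}\times \R_+$.

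Continuity of the subadditive pressure in $\mathcal{A}$ for fixed $t$ is by now standard: the entropy map $\mu\mapsto h_\mu(T)$ is upper semicontinuous on the mixing SFT $\Sigma$ and $t\Phi_{\mathcal{A}_l}$ converges uniformly to $t\Phi_\mathcal{A}$ in the $C^0$ sense; combined with convexity and uniform Lipschitz estimates in $t$ on compact intervals, this promotes to joint continuity in $(\mathcal{A},t)$. For the derivative, typicality of $\mathcal{A}$ (equivalently quasi-multiplicativity, following Park) guarantees that $t\Phi_\mathcal{A}$ admits a unique equilibrium state $\mu_t$, so $P_{\Phi_\mathcal{A}}$ is differentiable in $t$ with $P'_{\Phi_\mathcal{A}}(t)=\chi(\mu_t,\mathcal{A})$. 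The weak-$*$ continuity of $(\mathcal{A},t)\mapsto \mu_t$ (from uniqueness plus upper semicontinuity of $h_\mu+t\int\Phi\,d\mu$) together with continuity of the Lyapunov exponent along this family of equilibrium states yields $\alpha_{t_l}\to \alpha_t$. Putting these together,
\[h_{top}(E_{\mathcal{A}_l}(\alpha_{t_l}))=P_{\Phi_{\mathcal{A}_l}}(t_l)-t_l\,\alpha_{t_l}\ \longrightarrow\ P_{\Phi_\mathcal{A}}(t)-t\,\alpha_t = h_{top}(E_\mathcal{A}(\alpha_t)).\]

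For the second assertion, fix a typical $\mathcal{A}$ and let $t\to\infty$. From $\lim_{t\to\infty}P_{\Phi_\mathcal{A}}(t)/t=\beta(\mathcal{A})$ and convexity of the pressure, $\alpha_t=P'_{\Phi_\mathcal{A}}(t)\to \beta(\mathcal{A})$. The quantity $P_{\Phi_\mathcal{A}}(t)-t\alpha_t$ is the $y$-intercept of the tangent line to the pressure at $t$, and in the limit it equals
\[\sup\bigl\{h_\mu(T):\mu\in\mathcal{M}(\Sigma,T),\ \chi(\mu,\mathcal{A})=\beta(\mathcal{A})\bigr\},\]
because any weak-$*$ accumulation point of $\mu_t$ is a Lyapunov-maximizing measure, and upper semicontinuity of entropy along this sequence ensures that the supremum is attained. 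The last task is to identify this supremum with $h_{top}(E(\beta(\mathcal{A})))$: this follows by approximating $\alpha\nearrow\beta(\mathcal{A})$ from inside $\mathring{\Omega}$, applying Theorem B at $\alpha$, and passing to the limit using the concavity (hence continuity) of $\alpha\mapsto h_{top}(E(\alpha))$ on $\mathring{L}$ from Theorem A, combined again with the upper semicontinuity of $h_\mu(T)$ on the compact set of Lyapunov-maximizing measures.

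The main obstacle is precisely this boundary identification: Theorem B is stated only for $\alpha\in\mathring{\Omega}$, yet the limiting value $\alpha=\beta(\mathcal{A})$ sits on the boundary of $\Omega$, so the Legendre-duality formula must be transferred to the endpoint. The resolution is the two-sided squeeze just described, in which concavity of $\alpha\mapsto h_{top}(E(\alpha))$ from Theorem A controls the interior approximation, while compactness of the set of Lyapunov-maximizing measures together with upper semicontinuity of the entropy functional controls the variational supremum on the boundary, ensuring that $\lim_{t\to\infty}[P_{\Phi_\mathcal{A}}(t)-t\alpha_t]=h_{top}(E(\beta(\mathcal{A})))$.
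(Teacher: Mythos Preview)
Your argument for the first assertion is essentially the paper's: both routes reduce to the identity $h_{top}(E_{\mathcal{A}}(\alpha_t))=h_{\mu_t}(T)=P_{\Phi_{\mathcal{A}}}(t)-t\,\alpha_t$ and then invoke continuity of the pressure (Park) together with uniqueness of the equilibrium state to force $\mu_{t_l}\to\mu_t$ and hence $h_{\mu_{t_l}}(T)\to h_{\mu_t}(T)$. The only difference is packaging: the paper gets $h_{top}(E(\alpha_t))=h_{\mu_t}(T)$ directly from Feng--Huang's Theorem~4.8 (Theorem~\ref{derivative} here), whereas you derive it from the Legendre formula of Theorem~B. The paper's route is slightly safer because Theorem~B is stated only for $\alpha\in\mathring{\Omega}$ and you do not verify that $\alpha_t$ lies there, while Theorem~\ref{derivative} needs only uniqueness of $\mu_t$.

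For the second assertion there is a genuine gap. You correctly isolate the obstacle---identifying $h_{top}(E(\beta(\mathcal{A})))$ with $\sup\{h_\mu(T):\chi(\mu,\mathcal{A})=\beta(\mathcal{A})\}$---but your proposed resolution does not close it. Concavity of $\alpha\mapsto h_{top}(E(\alpha))$ on $\mathring{L}$ (Theorem~A) yields continuity only on the \emph{interior}; a concave function can drop at the boundary, so knowing $\lim_{\alpha\nearrow\beta}h_{top}(E(\alpha))$ tells you nothing a~priori about $h_{top}(E(\beta(\mathcal{A})))$. Upper semicontinuity of $h_\mu$ on the set of maximizing measures likewise controls the \emph{variational} side, not the \emph{level-set} side, so the two halves of your squeeze do not meet.

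The paper bypasses this entirely by citing two external results. First, Feng--Huang's Theorem~1.1 (Theorem~\ref{level set1} here) gives the boundary identity
\[
h_{top}(E(\beta(\mathcal{A})))=\sup\{h_\mu(T):\mu\in\mathcal{M}(\Sigma,T),\ \chi(\mu,\mathcal{A})=\beta(\mathcal{A})\}
\]
directly, with no appeal to Theorems~A or~B. Second, the author's own zero-temperature result \cite[Theorem~1.1]{Moh20} (Theorem~\ref{cont1en} here) supplies precisely the statement that any weak-$\ast$ accumulation point $\mu$ of $(\mu_t)_{t\to\infty}$ is Lyapunov maximizing with $h_{\mu_t}(T)\to h_\mu(T)=\sup\{h_\nu(T):\chi(\nu,\mathcal{A})=\beta(\mathcal{A})\}$; this is the fact you sketch but do not prove, and it is what actually makes the limit land on the right value. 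With those two inputs the second assertion is immediate, and the concavity of Theorem~A plays no role.
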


We also investigate the continuity of the minimal Lyapunov exponents for general cocycles. We prove the continuity of the minimal Lyapunov exponent under a cone condition. Moreover, our result implies the continuity of the minimal Lyapunov exponent under 1-domination assumption.
\begin{thmD}\label{thmD}
Let $(X, T)$ be a topologically mixing subshift of finite type. Suppose that $\mathcal{A}_{n}, \mathcal{A}:X \rightarrow GL(k, \R)$ are matrix cocycles over $(X, T)$, and $\Phi_{\mathcal{A}}$ has bounded distortion. Assume that $(C_x)_{x\in X}$ is an invariant cone field on $X$ \footnote{See Section \ref{5}.}. Then $\alpha(\mathcal{A}_{n}) \rightarrow \alpha(\mathcal{A})$ when $\mathcal{A}_{n}\rightarrow \mathcal{A}.$
\end{thmD}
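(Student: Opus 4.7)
The plan is to reduce the continuity of $\alpha(\cdot)$ from a subadditive problem to a standard ergodic-optimization problem for a continuous (additive) potential. The key point is that, under the invariant cone field assumption together with bounded distortion of $\Phi_{\mathcal{A}}$, the subadditive sequence $\log\|\mathcal{A}^{n}(x)\|$ is a bounded perturbation of a genuine Birkhoff sum whose integrand depends continuously on $\mathcal{A}$; this will then yield continuity of $\alpha$ via the standard $1$-Lipschitz estimate for the minimal ergodic average.

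First I would extract a dominated invariant subbundle from the cone. The cone invariance and compactness of $X$ produce a continuous, $\mathcal{A}$-invariant subbundle
\[
E(x):=\bigcap_{n\geq 0}\mathcal{A}^{n}(T^{-n}x)\,C_{T^{-n}x}\subset C_x,
\]
and the uniform angular separation between $E$ and a transverse bundle forces the top singular direction of $\mathcal{A}^{n}(x)$ to stay within a uniformly bounded angle of $E(x)$. Setting $\psi_{\mathcal{A}}(x):=\log\|\mathcal{A}(x)\mid_{E(x)}\|$, the invariance of $E$ together with the bounded distortion of $\Phi_{\mathcal{A}}$ should yield a uniform estimate
\[
\bigl|\log\|\mathcal{A}^{n}(x)\|-S_{n}\psi_{\mathcal{A}}(x)\bigr|\leq K \qquad \forall n\in\N,\ x\in X.
\]
Dividing by $n$ and taking limits, via Kingman's theorem on the left and Birkhoff's theorem on the right, I obtain $\chi(\mu,\mathcal{A})=\int \psi_{\mathcal{A}}\,d\mu$ for every $\mu\in\mathcal{M}(X,T)$, and hence
\[
\alpha(\mathcal{A})=\inf_{\mu\in\mathcal{M}(X,T)}\int\psi_{\mathcal{A}}\,d\mu.
\]

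Next I would verify that $\psi_{\mathcal{A}_n}\to\psi_{\mathcal{A}}$ uniformly on $X$ whenever $\mathcal{A}_n\to\mathcal{A}$. The (strict) cone condition is open in the $C^{0}$ topology on cocycles, so $(C_x)$ is also invariant under $\mathcal{A}_n$ for all large $n$, and $E_{\mathcal{A}_n}$ is well-defined. A uniform cone-contraction argument (essentially a graph-transform estimate) yields $E_{\mathcal{A}_n}(x)\to E_{\mathcal{A}}(x)$ uniformly in $x$, and continuity of $\mathcal{A}\mapsto \mathcal{A}(x)\mid_{E_{\mathcal{A}}(x)}$ then gives $\|\psi_{\mathcal{A}_n}-\psi_{\mathcal{A}}\|_{\infty}\to 0$. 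Since $f\mapsto \inf_{\mu\in\mathcal{M}(X,T)}\int f\,d\mu$ is $1$-Lipschitz in the $C^{0}$ norm,
\[
|\alpha(\mathcal{A}_n)-\alpha(\mathcal{A})|\leq \|\psi_{\mathcal{A}_n}-\psi_{\mathcal{A}}\|_{\infty}\longrightarrow 0,
\]
as required.

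The main obstacle is the uniform additive-reduction estimate in the second paragraph. When $\dim E=1$ the identity $\log\|\mathcal{A}^{n}(x)\mid_{E(x)}\|=S_{n}\psi_{\mathcal{A}}(x)$ is exact and only the alignment of $E$ with the top singular direction has to be controlled. When $\dim E\geq 2$, however, $\|\mathcal{A}^{n}\mid_{E}\|$ is only submultiplicative along $E$, and one must combine the almost-multiplicativity supplied by bounded distortion with the uniform angular gap of $E$ over its transverse complement to obtain the additive representation, with constants that are stable under $C^{0}$-perturbations of the cocycle. This quantitative step is the technical heart of the argument.
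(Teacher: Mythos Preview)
Your strategy is sound and genuinely different from the paper's. The paper proceeds through almost-additive thermodynamic formalism: it first shows (Proposition~\ref{prop:add}) that the cone condition forces $\Phi_{\mathcal{A}}$ to be almost additive, then invokes uniqueness of equilibrium states for almost-additive potentials with bounded distortion (Theorem~\ref{almost-unique}), approximates $\alpha(\mathcal{A})$ by $\chi(\mu_t,\mathcal{A})$ along the family of equilibrium states as $t\to-\infty$ (Theorem~\ref{app}), and finally uses continuity of $\mu\mapsto\chi(\mu,\Phi)$ for almost-additive $\Phi$ (Lemma~\ref{AAP}). Your route sidesteps all of this machinery: by extracting the invariant line $E(x)$ inside the cone you reduce $\alpha(\mathcal{A})$ to $\inf_{\mu}\int\psi_{\mathcal{A}}\,d\mu$ for a genuinely continuous potential, and then the $1$-Lipschitz dependence of the minimal ergodic average on $\psi$ gives continuity immediately. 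This is more elementary and in fact does not use the bounded-distortion hypothesis at all in the one-dimensional cone case; the estimate $|\log\|\mathcal{A}^n(x)\|-S_n\psi_{\mathcal{A}}(x)|\le K$ follows from the uniform cone contraction alone, exactly as in the paper's Lemma~\ref{lem:supp}. What the paper's approach buys is that it never needs to construct $E(x)$ explicitly, so it goes through verbatim for non-invertible $T$ (the paper's $D_x$ is built from the full preimage set $T^{-1}(x)$), whereas your formula $E(x)=\bigcap_{n\ge0}\mathcal{A}^n(T^{-n}x)C_{T^{-n}x}$ presupposes the two-sided shift; on a one-sided shift different backward branches may single out different lines inside $C_x$, and you would have to pass to the natural extension or argue differently.

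Two small remarks. First, your invocation of bounded distortion in the second paragraph is not needed when $\dim E=1$: the identity $\log\|\mathcal{A}^n|_{E}\|=S_n\psi_{\mathcal{A}}$ is exact, and the comparison of $\|\mathcal{A}^n\|$ with $\|\mathcal{A}^n|_E\|$ comes purely from the cone geometry. Second, the paper's Proposition~\ref{prop:add} (and hence Theorem~D as actually proved there) is stated only for $1$-dimensional cones, so your concern about $\dim E\ge 2$ goes beyond what the paper establishes; your outline for that case is plausible but would indeed require the additional work you describe.
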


In this paper, we also obtain the high dimensional versions of Theorems A, B, and C.

The paper divides into two parts. The first part contains Sections \ref{2}, \ref{3} and \ref{4}, where we introduce necessary notions and results needed to state Theorems A, B and C as well as the proofs of Theorems A, B and C, and the second part contains Section \ref{5}, where we introduce necessary notions and results needed to state Theorem D and its proof.

 \textbf{\textit{Acknowledgements.}}
The author thanks Micha{\l} Rams for his careful reading of an earlier version of this paper and many helpful discussions. In particular, the author thanks Micha{\l} for helping with Proposition \ref{prop:add}. He would also like to thank Aaron Brown and Cagri Sert for introducing papers \cite{P} and \cite{BS}. Finally, he thanks the anonymous referee for corrections and suggestions that helped improve the paper.

Much of this work was completed at IMPAN in 2016-2020, and was partially supported by the National Science Center grant 2019/33/B/ST1/00275 (Poland).

\section{Preliminaries}\label{2}
In this section, we recall some basic facts and definitions that we need to prove main theorems.
\subsection{Symbolic dynamics}
In this section, we recall some definitions and basic facts related to subshift of finite type. For more information see \cite{LM}.

Let $Q=(q_{ij})$ be a $k\times k$ with $q_{ij}\in \{0, 1\}.$  
 The one side subshift of finite type associated to the matrix $Q$ is a left shift map $T:\Sigma_{Q}^{+}\rightarrow \Sigma_{Q}^{+}$ meaning that, $T(x_{n})_{n\in \N_{0}}=(x_{n+1})_{n\in \N_{0}}$, where $\Sigma_{Q}^{+}$ is the set of sequences 
\[ \Sigma_{Q}^{+}:=\{x=(x_{i})_{i\in \N_{0}} : x_{i}\in \{1,...,k\} \hspace{0.2cm}\textrm{and} \hspace{0.1cm}Q_{x_{i}, x_{i+1}}=1 \hspace{0.2cm}\textrm{for all}\hspace{0.1cm}i\in \N_{0}\}.\]
 Similarly, one defines two sided subshift of finite type $T:\Sigma_{Q}\rightarrow \Sigma_{Q}$, where 
\[\Sigma_{Q}:=\{x=(x_{i})_{i\in \Z} : x_{i}\in \{1,...,k\} \hspace{0.2cm}\textrm{and} \hspace{0.1cm}Q_{x_{i}, x_{i+1}}=1 \hspace{0.2cm}\textrm{for all}\hspace{0.1cm}i\in \Z\}.\]
When the matrix $Q$ has entries all equal to $1$ we say this is the \textit{full shift}. For simplicity, we denote that $\Sigma_{Q}^{+}=\Sigma^{+}$ and $\Sigma_{Q}=\Sigma$.

 We say that $i_{0}...i_{k-1}$ is an \textit{admissible word} if $Q_{i_{n},i_{n+1}}= 1$ for all $0\leq n \leq k-2$. We denote by $\mathcal{L}$ the collection of admissible words. We denote by $|I|$ the length of $I\in \mathcal{L}.$ Denote by $\mathcal{L}(n)$ the set of \textit{admissible words of length $n$}. That is, a word $i_{0},..,i_{n-1}$ with $i_{j}\in \{1,...,k\}$ such that $Q_{x_{i}, x_{i+1}}=1$.  One can define \textit{n-th level cylinder} $[I]$ as follows:
\[ [I]=[i_{0}...i_{n-1}]:=\{x\in \Sigma : x_{i}=i_{j} \hspace{0.2cm}\forall \hspace{0.1cm} 0\leq j\leq n-1 \},\]
for any $i_{0}...i_{n-1}\in \mathcal{L}(n).$

Observe that the partition of $\Sigma_Q$ (or $\Sigma_Q^+$) into first level cylinders is generating, for this reason the partition into first level cylinders is the partition canonically used in symbolic dynamics to calculate the metric entropy.

\begin{defn}
The matrix $Q$ is called primitive when there exists $n>0$ such that all the entries of $Q^n$ are positive.
\end{defn}

It is well-known that a subshift of a finite type associated with a primitive matrix $Q$ is  \textit{topologically mixing}. That is, for every open nonempty $U, V \subset \Sigma$, there is $N$ such that for every $n\geq N$, $T^{n}(U)\cap V \neq \emptyset.$ We say that $T$ is \textit{topological transitive} if there is a point with dense orbit.

We fix $\omega \in (0,1)$ and consider the space $\Sigma$ is endowed with the metric $d_{\omega}$ which is  defined as follows: For $x=(x_{i})_{i\in \Z}, y=(y_{i})_{i\in \Z} \in \Sigma$, we have 
\begin{equation}\label{metric}
d_{\omega}(x,y)= \omega^{k},
\end{equation} 
where $k$ is the largest integer such that $x_{i}=y_{i}$ for all $|i| <k.$ We denote $d:=d_{\omega}$ when there is no confusion about $\omega.$

In the two-sided dynamics, we define the \textit{local stable set}
\[ W_{\loc}^{s}(x)=\{(y_{n})_{n\in \Z} : x_{n}=y_{n} \hspace{0,2cm}\textrm{for all}\hspace{0.2cm} n\geq 0\} \]
and the \textit{local unstable set}
\[ W_{\loc}^{u}(x)=\{(y_{n})_{n\in \Z} : x_{n}=y_{n} \hspace{0,2cm}\textrm{for all}\hspace{0.2cm} n<0\} .\]

Furthermore, the global stable and unstable manifolds of $x \in \Sigma$ are
\[W^{s}(x):=\left\{y \in \Sigma: T^{n} y \in  W_{\loc}^{s}(T^{n}(x))\text { for some } n \geq 0\right\},\]
\[
W^{u}(x):=\left\{y \in \Sigma: T^{n} y \in W_{\loc}^{u}(T^{n}(x)) \text { for some } n \leq 0\right\}
.\]

If $\Sigma$ is equipped by the metric $d_{\omega}$ (see \eqref{metric}), then the two side subshift of finite type $T:\Sigma \rightarrow \Sigma$ becomes a hyperbolic homeomorphism; see \cite[Subsection 2.3]{AV10}.

\subsection{Multilinear algebra}
We recall some basic facts about the exterior algebra. We use it for studying the singular value function. 

 We denote by $\sigma_{1},...,\sigma_{k}$ the singular values of the matrix $A$, which are the square roots of the eigenvalues of the positive semi definite matrix $A^{\ast}A$ listed in decreasing order according to multiplicity.

$\{e_{1},..,e_{k}\}$ is the standard orthogonal basis of $\R^{k}$ and define
\[ \land^{l} \R^{k} :=\textrm{span} \{e_{i_{1}}\wedge e_{i_{2}} \wedge ... \wedge e_{i_{l}} : 1\leq i_{1}\leq i_{2} \leq ... \leq i_{l}\leq k\}\]
for all $l\in\{1,...,k\}$ with the convention that $\land^{0} \R^{k}=\R$. It is called the \textit{l-th exterior power of} $\R^{k}$. That is a $\binom kl$-dimensional $\R$-vector space spanned by decomposable vectors $v_1 \wedge \ldots \wedge v_k$ with the usual identifications.

We are interested in the group $k\times k$ invertible matrices of real numbers $GL(k, \R)$ that can be seen as a subset of $\R^{k^{2}}$. This space has a
topology induced from $\R^{k^{2}}$. For $A\in GL(k, \R)$, we define an invertible linear map $A^{\wedge l} : \land^{l} \R^{k} \rightarrow \land^{l} \R^{k}$ as follows
\[ (A^{\wedge l}(e_{i_{1}}\wedge e_{i_{2}} \wedge ... \wedge e_{i_{l}}))= Ae_{i_{1}}\wedge Ae_{i_{2}} \wedge ... \wedge Ae_{i_{l}}.\]

$A^{\wedge l}$ can be represented by a $\binom kl \times \binom kl$ whose entries are the $l \times l$ minors of $A$. It can be also shown that 
\[(AB)^{\wedge l}=A^{\wedge l} B^{\wedge l},\]

\[ \|A^{\wedge l}\|=\sigma_{1}(A)...\sigma_{l}(A).\]

\subsection{Fiber bunched cocycles}
We recall that $T:\Sigma \rightarrow \Sigma$ is a topologically mixing subshift of finite type. We say that $\mathcal{A}:\Sigma \rightarrow GL(k, \R)$ is a r-H\"older continuous function, if there exists $C>0$ such that
\begin{equation}\label{hol}
 \|\mathcal{A}(x)-\mathcal{A}(y)\|\leq Cd(x,y)^{r} \hspace{0,2cm} \forall x,y \in \Sigma.
\end{equation}
We fix a  $\omega \in (0,1)$ and for $r> 0$ we let $H^{r}(\Sigma, GL(k, \R))$ be the set of r-H\"older continuous functions over the shift with respect to the
metric $d_{\omega}$ on $\Sigma$.

We denote by $h_{r}(\mathcal{A})$ the smallest constant $C$ in (\ref{hol}). We equip the $H^{r}(\Sigma, GL(k, \R))$ with the distance
\[ D_{r}(A_1, A_2)=\sup_{X}\|A_1 -A_2\|+h_{r}(A_1 - A_2).\]

It is clear the locally constant functions are $\infty$-H\"older i.e, they are $r$-H\"older for every $r>0$, with bounded $h_{r}(\mathcal{A}).$

\begin{defn}\label{holonomy}
A \textit{local stable holonomy} for the linear cocycle $(T, \mathcal{A})$ is a family of matrices $H_{y \leftarrow x}^{s} \in GL(k, \R)$ defined for all $x\in \Sigma$ with $y\in W_{\loc}^{s}(x)$ such that
\begin{itemize}
\item[a)]$H_{x \leftarrow x}^{s}=Id$ and $H_{z \leftarrow y}^{s} o H_{y \leftarrow x}^{s}=H_{z \leftarrow x}^{s}$ for any $z,y \in W_{\loc}^{s}(x)$.
\item[b)] $\mathcal{A}(y)\circ H_{y \leftarrow x}^{s}=H_{T(y) \leftarrow T(x)}^{s}\circ \mathcal{A}(x).$
\item[c)] $(x, y, v)\mapsto H_{y\leftarrow x}(v)$ is continuous.
\end{itemize}
Moreover, if $y\in W_{\loc}^{u}(x)$, then similarly one defines $H_{x \leftarrow y}^{u}$ with analogous properties.
\end{defn}

According to $(b)$ in the above definition, one can extend the definition to the global stable holonomy $H_{y\leftarrow x}^{s}$ for $y\in W^{s}(x)$ not necessarily in $W_{\loc}^{s}(x)$ :
\[ H_{y\leftarrow x}^{s}=\mathcal{A}^{n}(y)^{-1} \circ H_{T^{n}(y)\leftarrow T^{n}(x)}^{s}\circ \mathcal{A}^{n}(x),\]
where 
$n\in \N$ is large enough such that $T^{n}(y)\in W_{\loc}^{s}(T^{n}(x))$. One can extend the definition the global unstable holonomy similarly.

\begin{defn}
A $r-$H\"older continuous function $\mathcal{A}$ is called \textit{fiber bunched} if for any $x\in \Sigma$, 
\begin{equation}\label{fiber}
\|\mathcal{A}(x)\|\|\mathcal{A}(x)^{-1}\|\omega^{r}<1.
\end{equation} 

\end{defn}
We say that the linear cocycle $(T, \mathcal{A})$ is fiber-bunched if its generator $\mathcal{A}$ is fiber-bunched. Recall that $H_{b}^{r}(\Sigma, GL(k, \R))$ is the family of r-H\"older-continuous and fiber bunched functions.

The geometric interpretation of the fiber bunching condition is as follows. Let $\mathcal{A}\in H_{b}^{r}(\Sigma, GL(k, \R))$. The \textit{projective} cocycle associated to $\mathcal{A}$ and $T$ is the map $\mathbb{P}F:\Sigma \times \mathbb{P}\R^{k} \rightarrow \Sigma \times \mathbb{P}\R^{k}$ given by
\[\mathbb{P}F(x, v):=(T(x), \mathbb{P} \mathcal{A}(x)v).\]

We denote by $D\mathcal{A}_{v}$ the derivative of the action $\mathbb{P}\R^{k} \rightarrow \mathbb{P}\R^{k}$ on projective space at all points $v\in \mathbb{P}\R^{k}$. Taking derivative
\[ \|D\mathcal{A}_{v}\|\leq \|\mathcal{A}\| \|\mathcal{A}^{-1}\| \hspace{0.2cm}\textrm{and}\hspace{0.2cm}  \|D\mathcal{A}_{v}^{-1}\|\leq \|\mathcal{A}\| \|\mathcal{A}^{-1}\|\]
for all $v\in \mathbb{P}\R^{k}.$ Therefore, the fiber bunching condition implies that rate of expansion (respectively, contraction) the projective cocycle $\mathbb{P}F$ at every point $x\in \Sigma$ is bounded above by $(\frac{1}\omega)^{r}$ (respectively, below by $\omega^{r}$). 

The H\"older continuity and the fiber bunched assumption on $\mathcal{A}\in H_{b}^{r}(\Sigma, GL(k, \R))$ implies the convergence of the \textit{canonical holonomy} $H^{s \diagup u}$ (see \cite{BGMV}, \cite{KS}). That means, for any $y\in W_{\loc}^{s \diagup u}(x)$,
\[H_{y\leftarrow x}^{s}:=\lim_{n\rightarrow \infty}\mathcal{A}^{n}(y)^{-1} \mathcal{A}^{n}(x) \hspace{0.2cm}\textrm{and}\hspace{0.2cm}H_{y\leftarrow x}^{u}:=\lim_{n\rightarrow -\infty}\mathcal{A}^{n}(y)^{-1} \mathcal{A}^{n}(x).\]
In addition, when the linear cocycle is fiber bunched, the canonical holonomies  vary $r-$H\"older continuously (see \cite{KS}), i.e., there exists $C>0$ such that for $y\in W_{\loc}^{s \diagup u}(x)$,
\[\|H_{x \leftarrow y}^{s \diagup u}-I\|\leq C d(x,y)^{r}.\]

In this paper, we will always work with the canonical holonomies for fiber bunched cocycles.

\begin{rem} Even though the locally constant cocycles are not necessary fiber bunched, the canonical holonomies always exist. Indeed, for every $y\in W^{s}(x)$ there exist $m$ such that $x_n=y_n$ for all $n\geq m$. Then,
\[H_{x \leftarrow y}^{s} =\mathcal{A}^{-1}(x)\cdots\mathcal{A}^{m-1}(x)^{-1} \mathcal{A}^{m-1}(y)\cdots\mathcal{A}(y).  \]

In particular
 $H_{x \leftarrow y}^{s}= Id,$ for all $y\in W_{\loc}^{s}(x)$.
Similarly, we get the existence of the unstable holonomy.
\end{rem}
\begin{rem}If a linear cocycle is not fiber bunched, then it might admit multiple holonomies (see \cite{KS16}).
\end{rem}

\subsection{Typical cocycles}
 We are going to discuss typical cocycles.  For details, one is referred to \cite{AV}, \cite{BV1} and  \cite{V}.
 
 Suppose that $p\in \Sigma$ is a periodic point of $T$, we say $p\neq z\in \Sigma$ is a \textit{homoclinic point} associated to $p$ if it is the intersection of the stable and unstable manifold of p. That is, $z\in W^{s}(p) \cap W^{u}(p)$ (see figure \ref{fig1}). The set of homoclinic points of any periodic point is dense for hyperbolic systems such as $(\Sigma, T)$.
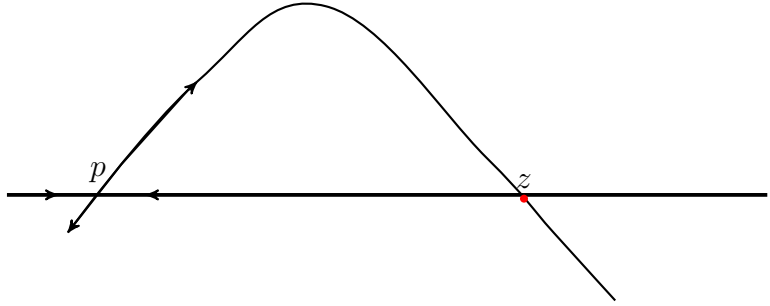
\begin{figure}[ht]
\centering
  \begin{tikzpicture}[paths/.style={<->, thick, >=stealth'}]
\draw[line width=0.5mm] (-2,3.5) -- (8,3.5)  node[pos=0.12,above]{\(p\)}  ;
\draw[>-<,thick,>=stealth'] (-1.5,3.5) -- (0,3.5);
\coordinate (G) at (0,4.3);
\coordinate (R) at (-0.9,3.1);
\coordinate (B) at (5,3);
\coordinate (D) at (8.2,5);
\draw[smooth,tension=0.7,thick] plot coordinates{(-1.2,3) (0.5,5) (2.2,6) (4.3,4) (5.19,3) (6,2.1)};
\draw[smooth,tension=0.6, <->,thick,>=stealth'] (-1.2,3) --(-0.5,3.9) --(0.39,4.9)--(0.4,4.9)--(0.5,5);
\coordinate [label=above:${z }$] (C) at (4.8,3.45);
\node[point] at (C) {};
\end{tikzpicture}
    \caption{Homoclinic point.} \label{fig1}
    \label{tikz:decision-tree}
\end{figure}

We define the \textit{holonomy loop}
\[ \psi_{p}^{z}:=H_{p \leftarrow z}^{s} \circ H_{z \leftarrow p}^{u}.\]
\begin{defn}\label{typical1}
Suppose that $\mathcal{A}:\Sigma \rightarrow GL(k, \R)$  belongs $H_{b}^{r}(\Sigma, GL(k, \R))$. We say that $\mathcal{A}$ is \textit{1-typical} if there exist a periodic point $p$ and a homoclinic point $z$ associated to $p$ such that:
\begin{itemize}
\item[(i)] the eigenvalues of  $\mathcal{A}^{per(p)}(p)$ have multiplicity $1$ and distinct absolute values. Let $\{v_{i}\}_{i=1}^{k}$ be the eigenvectors of $P:=\mathcal{A}^{per(p)}(p).$
\item[(ii)]for any $1\leq i, j\leq k$, $\psi_{p}^{z}(v_{i})$ does not lie in any hyperplane $W_{j}$, where $ W_{j}$ spanned by all eigenvectors of $P$ other than $v_{j}$.
\end{itemize}
\end{defn}
For $k=2$ this second condition means that $\psi_{p}^{z}(v_{i})\neq v_{j}$ for $1 \leq i, j \leq 2$. See Figure \ref{fig2}.\\

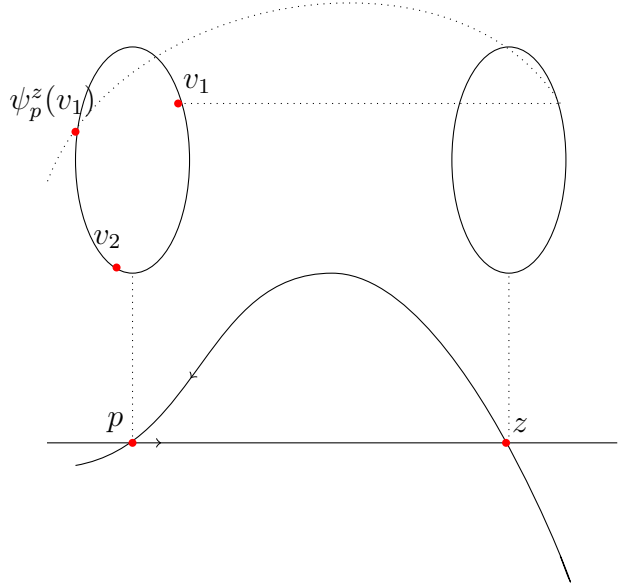
\begin{figure}[ht]
\centering
  \begin{tikzpicture}[scale=0.75]
  \tikzset{->-/.style={decoration={
  markings,
  mark=at position .2 with {\arrow{>}}},postaction={decorate}}}

\draw[dotted] (-3.5,0)--(-3.5,3)node[pos=1.2,left]{\(v_{2}\)};
\draw[dotted] (3.1,0)--(3.1,3);
\draw (-3.5,5) ellipse (1cm and 2cm);
\draw (3.1,5) ellipse (1cm and 2cm);
\draw[dotted] (-2.7,6)--(4,6) node[pos=0.05,above]{\(v_{1}\)}node[pos=-0.47,right]{\(\psi_{p}^{z}(v_{1})\)};

\draw[dotted] (4,6)  to[out=122,in=70] (-5,4.6) ;
\draw[->-] (-5,0) -- (5,0)  node[pos=0.12,above]{\(p\)} node[pos=0.83,above]{\(z\)} ;
 \tikzset{-<-/.style={decoration={
  markings,
  mark=at position .2 with {\arrow{<}}},postaction={decorate}}}
\draw[-<-] (-4.5, -0.4) to [out=10,in=-180](0,3) to[out=0,in=-70] (4,-2);
\coordinate (m1) at (-3.78,3.1);
\coordinate (Mor10) at (-2.7,6);
\coordinate (m3) at (-3.5,0);
\coordinate (m4) at (-4.5,5.5);
\coordinate (m5) at (3.05,0);
\node[point] at (m1) {};
\node[point] at (Mor10) {};
\node[point] at (m3) {};
\node[point] at (m4) {};
\node[point] at (m5) {};
\end{tikzpicture}
  \caption{$\psi_{p}^{z}(v_{1})\neq v_{2}$} \label{fig2}
  
\end{figure}
We refer to $(i)$ as the \textit{(pinching)} properties and to $(ii)$ as the \textit{(twisting)} properties.

 The cocycles generated by $\mathcal{A}^{\wedge t}$, $1\leq t\leq k$ also admit stable and unstable holonomies, namely $(H^{s \diagup u})^{\wedge t}.$
\begin{defn}\label{typical2}
Assume that $\mathcal{A}$ is 1-typical. We say $\mathcal{A}$ is \textit{t-typical} for $2\leq t \leq k-1$, if the points $p, z \in \Sigma$ from Definition \ref{typical1} satisfy
\begin{itemize}
\item[(I)] all the products of $t$ distinct eigenvalues of $P$ are distinct;
\item[(II)]  the induced map $(\psi_{p}^{z})^{\wedge t}$ on $(\R^{k})^{\wedge t}$ satisfies the analogous statement to $(ii)$ from Definition \ref{typical1} with respect to $\left\{v_{i_{1}} \wedge \ldots \wedge v_{i_{t}}\right\}_{1 \leq i_{1}<\ldots<i_{t} \leq k} \text { of } P^{\wedge t}$. 
\end{itemize}
We say that $\mathcal{A}$ is \textit{typical} if $\mathcal{A}$ is $t-$typical for all $1 \leq t \leq k-1$. We denote by $\mathcal{W} \subset H_{b}^{r}(\Sigma, GL(k, \R))$ the set of all typical functions.
\end{defn}
\begin{rem}
Above definition for typical cocycles comes from  \cite{P} that is a slightly weaker form of typical cocycles which were first introduced by Bonatti and Viana \cite{BV1}; Park \cite{P} considered a weaker twisting assumption. We also remark that
the difference between the settings of \cite{BV1} and \cite{P} does not cause
any problems in translating the relevant results and statements from \cite{BV1} to this paper. In spite of slight variations in the definition of typicality, such assumptions are
satisfied by an open and dense subset $\mathcal{W}$ of maps in $H_{b}^{r}(\Sigma, GL(k, \R))$, and its complement has infinite codimension (see \cite{AV, BV1}).
\end{rem}

\subsection{The continuity of Lyapunov exponents}
Throughout, $\mathbb{P}F: \Sigma^{+} \times \mathbb{P} \R^{k} \rightarrow \Sigma^{+}\times\mathbb{P}\R^{k}$ is the projective cocycle associated with linear cocycle $F:\Sigma^{+} \times \R^{k} \rightarrow \Sigma^{+} \times \R^{k}$ that is generated by $(T, \mathcal{A})$.

Let $(T, \mu$) be a Bernoulli shift. We say that a matrix cocycle is \textit{strongly irreducible} when there is no finite family of proper subspaces invariant by $\mathcal{A}(x)$ for $\mu$-almost every $x$. Furstenberg \cite[Theorem  6.8]{V} showed that the Lyapunov exponent $\chi(\mu, \mathcal{A})$ of $F$ coincides with the integral of the function $\psi : \Sigma^{+} \times \mathbb{P}\R^{k} \rightarrow \R,$
\[\psi(x, v)=\log \frac{\|\mathcal{A}(x)v\|}{\|v\|}\]
for locally constant cocycles under the strong irreducibility assumption. In other words, he showed that
\[ \chi(\mu, \mathcal{A})=\int \psi d(\mu \times \eta),\]
for any stationary measure $\eta$ of the associated projective cocycle $\mathbb{P}F$. So, one can easily show that we have the continuity of Lyapunov exponents with respect to $(\mathcal{A}, \mu)$ (\cite[Corollary 6.10]{V}) under the strong irreducibility assumption.

Even though discontinuity of Lyapunov exponents is a common feature (see \cite{Bo}, \cite{Boc1}), there are some results for the continuity of Lyapunov exponents. For instance, Bocker and Viana \cite{BV} proved the continuity of Lyapunov exponents of $2-$dimensional locally constant cocycles under a certain assumption. 
Avila, Eskin and Viana \cite{AEV} announced recently that Bocker and Viana's result remains true in arbitrary dimensions. It was conjectured by Viana \cite{V} that Lyapunov exponents are always continuous among $H_{b}^{\alpha}(X ,GL(2, \R))$-cocycles, and that has been proved by Backes, Brown and Butler \cite{BBB}.

We state the main result of Backes, Brown, and Butler as follows.
\begin{thm}[{{\cite[Theorem 1.1]{BBB}}}]\label{con1}
Lyapunov  exponents  vary  continuously  restricted  to  the  subset  of fiber-bunched elements $\mathcal{A}:X \to GL(2,\R).$
\end{thm}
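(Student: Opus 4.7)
The plan is to follow the Avila--Viana / Bonatti--G\'omez-Mont--Viana strategy: reduce continuity of the Lyapunov exponents to convergence of suitable invariant measures on the projective bundle, and then exploit fiber-bunching to transport that continuity through the canonical holonomies. I would fix an ergodic base measure $\mu$ (Bernoulli or Gibbs) and prove continuity of $\mathcal{A}\mapsto\chi_{i}(\mathcal{A},\mu)$ for $i=1,2$.

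The first step would be a semicontinuity reduction. By Kingman and Fekete, $\chi_{1}(\mathcal{A},\mu)=\inf_{n}\frac{1}{n}\int\log\|\mathcal{A}^{n}\|\,d\mu$ is an infimum of $\mathcal{A}$-continuous functions, hence upper semi-continuous, while $\chi_{1}+\chi_{2}=\int\log|\det\mathcal{A}|\,d\mu$ is continuous in $\mathcal{A}$. Thus it suffices to prove upper semi-continuity of $\chi_{2}$. This is automatic where $\chi_{1}(\mathcal{A})=\chi_{2}(\mathcal{A})$, so I would assume $\chi_{1}(\mathcal{A})>\chi_{2}(\mathcal{A})$. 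For $\mathcal{A}_{n}\to\mathcal{A}$ in $H_{b}^{r}(\Sigma,GL(2,\R))$, Oseledets gives a measurable splitting $\R^{2}=E_{n}^{u}(x)\oplus E_{n}^{s}(x)$; lifting $\mu$ to the section $x\mapsto\mathbb{P}E_{n}^{s}(x)$ produces a $\mathbb{P}F_{n}$-invariant probability $m_{n}$ on $\Sigma\times\mathbb{P}\R^{2}$ with $\int\psi_{\mathcal{A}_{n}}\,dm_{n}=\chi_{2}(\mathcal{A}_{n})$, where $\psi_{\mathcal{A}}(x,v)=\log(\|\mathcal{A}(x)v\|/\|v\|)$. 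Passing to a weak-$\ast$ subsequential limit $m$, the uniform convergence $\psi_{\mathcal{A}_{n}}\to\psi_{\mathcal{A}}$ yields $\lim_{n}\chi_{2}(\mathcal{A}_{n})=\int\psi_{\mathcal{A}}\,dm$, with $m$ being $\mathbb{P}F$-invariant. The task then becomes showing $\int\psi_{\mathcal{A}}\,dm\leq\chi_{2}(\mathcal{A})$.

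For this I would invoke the Ledrappier / Avila--Viana invariance principle: being concentrated on the minimal Oseledets line, each $m_{n}$ is an $s$-state for $\mathcal{A}_{n}$, meaning its fiber disintegration is invariant under the stable holonomy $H^{s,n}$. Fiber-bunching combined with H\"older convergence $\mathcal{A}_{n}\to\mathcal{A}$ gives uniform convergence $H^{s,n}\to H^{s}$ of the canonical holonomies via the Kalinin--Sadovskaya formula $H^{s}_{y\leftarrow x}=\lim_{k}\mathcal{A}^{k}(y)^{-1}\mathcal{A}^{k}(x)$ and the uniform estimate $\|H^{s}_{y\leftarrow x}-I\|\leq Cd(x,y)^{r}$. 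The limit $m$ would therefore inherit $s$-invariance, and the two-dimensional Ledrappier--Furstenberg formula would force $\int\psi_{\mathcal{A}}\,dm=\chi_{2}(\mathcal{A})$, closing the argument.

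The hard part is precisely this last passage to the limit. The $s$-state property of $m_{n}$ is a statement about fiber disintegrations $\{(m_{n})_{x}\}$, and weak-$\ast$ convergence of the total measures does not formally imply convergence of those disintegrations. The remedy is to test $s$-invariance against continuous functions on $\Sigma\times\mathbb{P}\R^{2}$ that are equivariant under the holonomies, a class of test functions available in abundance thanks to the continuity and H\"older regularity of $H^{s,n}$ and $H^{s}$. The restriction to $GL(2,\R)$ really bites here: in dimension two, $s$-invariance alone already suffices in the Furstenberg formula, whereas in higher rank one would additionally need a coordinated $u$-state analysis together with a finer disintegration step that is unavailable in full generality for fiber-bunched cocycles.
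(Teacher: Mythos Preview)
The paper does not give its own proof of this statement. Theorem~\ref{con1} is simply quoted from \cite[Theorem 1.1]{BBB} and used as a black box (it is applied once, in the corollary immediately following it). There is therefore no proof in the present paper against which your proposal can be compared.

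Your sketch is broadly in the spirit of the actual Backes--Brown--Butler argument, but two points deserve care if you intend to flesh it out. First, the labeling is reversed: the measure $m_{n}$ obtained by lifting $\mu$ along the section $x\mapsto\mathbb{P}E_{n}^{s}(x)$ is a $u$-state, not an $s$-state, since the Oseledets \emph{stable} direction is determined by backward iteration and is invariant under the \emph{unstable} holonomy. Second, and more seriously, the step you flag as ``the hard part'' is precisely the content of \cite{BBB}: the Bocker--Viana proof for locally constant cocycles over Bernoulli shifts already used the invariance-principle route you describe, and the obstruction to extending it to general fiber-bunched cocycles over hyperbolic bases was exactly that weak-$\ast$ convergence does not control disintegrations. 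Backes--Brown--Butler overcome this not by the equivariant-test-function trick you propose, but by a substantially different argument (an energy/entropy method controlling the angle between Oseledets directions along pieces of orbit). So while your outline captures the overall architecture, the resolution of the hard step you sketch is not the one that actually works, and filling it in as written would reproduce the gap that \cite{BBB} was written to close.
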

 That improves  Bocker and Viana's result \cite{BV}. Furthermore, Butler \cite{Bu} showed in the following example that the fiber-bunching condition is sharp.

\begin{ex}\label{example}
Assume that $T:\{0,1\}^{\Z}\rightarrow \{0,1\}^{\Z}$ is a shift map. We define a locally constant cocycle $(T, \mathcal{A})$ such that
\[ A_{0}=\left[\begin{array}{ccccc}\sigma & 0\\ 0 & \sigma^{-1} 
\end{array} \right], \hspace{0.2cm}A_{1}=\left[\begin{array}{ccccc}\sigma^{-1} & 0\\ 0 & \sigma 
\end{array} \right],\]
where $\sigma$ is a positive constant greater than 1. We define probability measure $\nu_{p}$ in order to $\nu_{p}([0])=p, \nu_{p}([1])=1-p,$ and then Bernoulli measure $\mu_{p}=\nu_{p}^{\Z}$. By definition the cocycle $(T, \mathcal{A})$ is fiber bunched if and only if $\sigma^{2}<2^{\alpha}$ \footnote{$\Sigma$ is equipped by a norm $d$ that is, for all $x\neq y$, $d(x, y)=2^{-N(x,y)}$, where $N(x, y)=\min\{n, x_{n}\neq y_{n}\}$.}.
\end{ex}

Butler\cite{Bu} showed that for above example if $\sigma^{4p-2}\geq 2^{\alpha}$ for $p\in(\frac{1}{2},1)$, then for each neighborhood $\mathcal{U}\subset H^{\alpha}(\{0, 1\}^{\Z}, SL(2, \R))$ of $\mathcal{A}$ and every $\kappa \in (0, (2p-1)\log \sigma],$ there  is  a  locally  constant cocycle  $\mathcal{B}\in \mathcal{U}$ such that $\chi(x, \mathcal{B})=\kappa$. In  particular,  $\mathcal{A}$ is  a  discontinuity  point  for Lyapunov exponents in $H^{\alpha}(\{0, 1\}^{\Z}, SL(2, \R)).$ So, this example shows that we have discontinuity of  Lyapunov exponents near fiber bunched cocycles.

By considering the exterior product cocycles $\mathcal{A}^{\wedge t}$, we define
\[
\chi_{t}(\mathcal{A}(x)):=\lim _{n \rightarrow \infty} \frac{1}{n} \log \varphi_{\mathcal{A}^{\wedge t},n}(x)
\]
if the limit exists, and set
\[
\vec{\chi}(x):=\left(\chi_{1}(\mathcal{A}(x)), \ldots, \chi_{k}(\mathcal{A}(x))\right)
\]
if $\chi_{t}(\mathcal{A}(x))$ exists for each $1 \leq t \leq k$. We define the \textit{pointwise Lyapunov spectrum} of $\mathcal{A}$ as
$\vec{L}:=\left\{\vec{\alpha} \in \mathbb{R}^{k}: \vec{\alpha}=\vec{\chi}(x)\right.$ for some $\left.x \in \Sigma\right\}$. For simplicity, we denote $\chi_{t}(\mathcal{A}):=\chi_{t}(\mathcal{A}(x))$ for all $1\leq t \leq k.$
\begin{thm}[{{\cite[Theorem D]{P}}}]\label{closed}
Let $\mathcal{A}\in \mathcal{W}$. Then $\vec{L}$ is a convex and closed  subset of $\R^{k}$.
\end{thm}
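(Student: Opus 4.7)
The plan is to prove both convexity and closedness of $\vec{L}$ by a single concatenation construction that relies on two ingredients: the topological mixing of $(\Sigma,T)$, and the simultaneous quasi-multiplicativity of every exterior cocycle $\mathcal{A}^{\wedge t}$, which follows from the full typicality hypothesis (pinching and twisting at every level $t$). After unifying the constants over $t$, one may assume there exist $C>0$ and $m\in\N$ such that for every $I,J\in\mathcal{L}$ some $K\in\mathcal{L}$ with $|K|\le m$ and $IKJ\in\mathcal{L}$ satisfies $\|\mathcal{A}^{\wedge t}(IKJ)\|\ge C\,\|\mathcal{A}^{\wedge t}(I)\|\,\|\mathcal{A}^{\wedge t}(J)\|$ for every $1\le t\le k-1$ (the case $t=k$ is automatic from multiplicativity of $\det$). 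Together with submultiplicativity of $\|\cdot\|$, this gives two-sided control on singular-value products along any concatenated orbit; bounded distortion, a consequence of fiber-bunching, lets one interchange $\|\mathcal{A}^{\wedge t,n}(x)\|$ and $\|\mathcal{A}^{\wedge t}(I)\|$ up to a uniform multiplicative constant whenever $x\in[I]$ and $|I|=n$.

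For convexity, fix $\vec{\alpha}_1,\vec{\alpha}_2\in\vec{L}$ realized by $x_1,x_2\in\Sigma$ and $\lambda\in[0,1]$. Pick a sequence $n_i\to\infty$ with $n_{i+1}\ge i(n_1+\cdots+n_i)$, and set
\[
y:=I_1K_1J_1K_2I_2K_3J_2K_4\cdots,
\]
where $I_i$ is the prefix of $x_1$ of length $\lfloor\lambda n_i\rfloor$, $J_i$ the prefix of $x_2$ of length $\lceil(1-\lambda)n_i\rceil$, and $|K_j|\le m$. Applying the lower (quasi-multiplicative) and upper (submultiplicative) bounds to $\mathcal{A}^{\wedge t}$ block by block, at $N$ equal to the length of the first $I$ complete blocks one obtains
\[
\frac{1}{N}\log\|\mathcal{A}^{\wedge t,N}(y)\|=\frac{1}{N}\sum_{i\le I}\log\|\mathcal{A}^{\wedge t}(I_i)\|+\frac{1}{N}\sum_{i\le I}\log\|\mathcal{A}^{\wedge t}(J_i)\|+O(I/N).
\]
Since $\tfrac{1}{|I_i|}\log\|\mathcal{A}^{\wedge t}(I_i)\|\to\alpha_1^{(t)}$, $\tfrac{1}{|J_i|}\log\|\mathcal{A}^{\wedge t}(J_i)\|\to\alpha_2^{(t)}$, and the rapid growth of $n_i$ forces the final block to dominate the partial sums, the two running averages converge to $\lambda\alpha_1^{(t)}$ and $(1-\lambda)\alpha_2^{(t)}$ for every $t$. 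Values of $N$ inside a block are handled by sandwiching between the two surrounding endpoints, and the same rapid growth collapses the sandwich to the common limit. Hence $\vec{\chi}(y)=\lambda\vec{\alpha}_1+(1-\lambda)\vec{\alpha}_2\in\vec{L}$.

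For closedness, given $\vec{\alpha}_n\to\vec{\alpha}$ with $\vec{\chi}(x_n)=\vec{\alpha}_n$, one repeats the construction with a single family of blocks: the $n$-th block is an initial segment of $x_n$ of rapidly growing length $N_n$, and consecutive blocks are joined by connectors of length $\le m$. The same sandwich argument yields $\frac{1}{N}\log\|\mathcal{A}^{\wedge t,N}(y)\|\to\alpha^{(t)}$ for every $t$, using that the $n$-th block contributes $\alpha_n^{(t)}+o(1)\to\alpha^{(t)}$ and dominates all earlier blocks.

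The main obstacle in both parts is that the required limit of $\frac{1}{N}\log\|\mathcal{A}^{\wedge t,N}(y)\|$ must hold along \emph{every} sequence $N\to\infty$ and \emph{simultaneously} for all exterior levels $t$. The resolution is that submultiplicativity bounds the norm from above and quasi-multiplicativity (used in reverse along a concatenation) from below at any $N$, and the rapid block-length schedule relative to the fixed connector length $m$ pinches these two bounds to the target, uniformly in partial blocks and in $t$. A subsidiary point is that the constants $C,m$ depend a priori on $t$, but taking their maxima over the finitely many values $1\le t\le k-1$ makes them uniform.
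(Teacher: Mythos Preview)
The paper does not prove this statement; it is quoted as \cite[Theorem D]{P} and used as a black box. So there is no ``paper's proof'' to compare with, and the relevant question is simply whether your argument is sound.

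Your closedness argument is essentially fine (modulo bookkeeping about partial averages inside the growing block, which can be handled by choosing $N_{n+1}$ large enough that all sufficiently long prefixes of $x_{n+1}$ already have average within $1/n$ of $\vec{\alpha}_{n+1}$).

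The convexity argument, however, has a genuine gap. Your growth condition $n_{i+1}\ge i(n_1+\cdots+n_i)$ is exactly what \emph{breaks} the sandwich at intermediate times. Write $S_i=n_1+\cdots+n_i$. At the end of compound block $i$ the proportion of $y$ coming from $x_1$ is $\lambda$, as desired. But at the end of the sub-block $I_{i+1}$ (i.e.\ after $I_1K_1J_1\cdots J_iK_{2i}I_{i+1}$) the total length is $N\approx (1-\lambda)S_i+\lambda S_{i+1}$ and the $x_1$-contribution is $\lambda S_{i+1}$, so the $x_1$-fraction is
\[
\frac{\lambda S_{i+1}}{(1-\lambda)S_i+\lambda S_{i+1}}
\;\ge\;
\frac{\lambda(i+1)}{(1-\lambda)+\lambda(i+1)}
\;\xrightarrow[i\to\infty]{}\;1.
\]
Hence along this subsequence of $N$'s the quantity $\tfrac{1}{N}\log\|\mathcal{A}^{\wedge t,N}(y)\|$ tends to $\alpha_1^{(t)}$, not to $\lambda\alpha_1^{(t)}+(1-\lambda)\alpha_2^{(t)}$. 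Your ``sandwiching between the two surrounding endpoints'' is not a sandwich at all: the value of $\tfrac{1}{N}\log\|\mathcal{A}^{\wedge t,N}(y)\|$ at an interior $N$ is in no way trapped between the values at $N_i$ and $N_{i+1}$; submultiplicativity/quasi-multiplicativity only controls the \emph{total} log-norm additively, which translates into a \emph{weighted} average, and the weights are precisely what go wrong under rapid growth.

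The standard repair is to reverse the growth regime for convexity: take $n_i\to\infty$ \emph{slowly} (for instance $n_{i+1}=o(S_i)$, e.g.\ $n_i=i$), so that any partial block is negligible compared with the accumulated history and the running $x_1$-fraction stays uniformly close to $\lambda$ at every time $N$. Alternatively, one may replace each compound block $I_iJ_i$ by a long alternation of many short pieces of $x_1$ and $x_2$ in ratio $\lambda:(1-\lambda)$. Either scheme, combined with simultaneous quasi-multiplicativity and bounded distortion as you already use them, yields the desired limit along all $N$ and all $t$.
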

We use Theorem \ref{con1} to show that the Lyapunov spectrum of fiber bunched cocycles is a closed and convex set.
\begin{cor}
Let $\mathcal{A}\in H_{b}^{r}(X, GL(2, \mathbb R)))$. Then $\vec{L}$ is a convex and closed  subset of $\R^{2}$.
\end{cor}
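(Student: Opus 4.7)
The plan is to combine three ingredients: density of typical cocycles $\mathcal{W}$ in $H_{b}^{r}(\Sigma, GL(k,\R))$, the convex-closed property given by Theorem \ref{closed} for typical cocycles, and the continuity of the top Lyapunov exponent for $2$-dimensional fiber-bunched cocycles (Theorem \ref{con1}); the conclusion then follows by passing to a Hausdorff limit.

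First, using that $\mathcal{W}$ is dense in $H_{b}^{r}(\Sigma, GL(k,\R))$ and that fiber-bunching \eqref{fiber} is an open condition, I would choose a sequence $\mathcal{A}_{n}\in \mathcal{W}\cap H_{b}^{r}(\Sigma, GL(2,\R))$ with $\mathcal{A}_{n}\to\mathcal{A}$. By Theorem \ref{closed} each $\vec{L}(\mathcal{A}_{n})$ is convex and closed, and is compact in view of the uniform bound on $\|\mathcal{A}_{n}\|,\|\mathcal{A}_{n}^{-1}\|$. The key intermediate step is to identify $\vec{L}(\mathcal{A})$ with the image $\Psi_{\mathcal{A}}(\mathcal{M}(\Sigma,T))$ of the affine map
\[
\Psi_{\mathcal{A}}\colon \mu\mapsto \bigl(\chi_{1}(\mu,\mathcal{A}),\ \textstyle\int \log|\det\mathcal{A}|\,d\mu\bigr),
\]
which is natural because in dimension two the second Lyapunov coordinate reduces to an honest Birkhoff integral of the continuous function $\log|\det\mathcal{A}|$.

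Next, Theorem \ref{con1} gives $\chi_{1}(\mu,\mathcal{A}_{n})\to \chi_{1}(\mu,\mathcal{A})$ for every $\mu$; a standard equicontinuity argument on the compact space $\mathcal{M}(\Sigma,T)$, using the uniform bound on $\|\mathcal{A}_{n}^{\pm 1}\|$, upgrades this to uniform convergence $\Psi_{\mathcal{A}_{n}}\to \Psi_{\mathcal{A}}$. Therefore $\vec{L}(\mathcal{A}_{n})=\Psi_{\mathcal{A}_{n}}(\mathcal{M}(\Sigma,T))$ Hausdorff-converges to $\Psi_{\mathcal{A}}(\mathcal{M}(\Sigma,T))=\vec{L}(\mathcal{A})$ in $\R^{2}$. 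Since both convexity and closedness are preserved under Hausdorff limits of compact subsets of $\R^{2}$, we deduce that $\vec{L}(\mathcal{A})$ is convex and closed.

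The main obstacle is the identification $\vec{L}(\mathcal{A})=\Psi_{\mathcal{A}}(\mathcal{M}(\Sigma,T))$: the inclusion $\supset$ is routine via Oseledets, since every invariant measure has a generic point realising its Lyapunov vector. For $\subset$ one must show that the subadditive limit $\chi_{1}(x,\mathcal{A})$ at a point $x\in\Sigma$ realising a given $\vec\alpha\in\vec L(\mathcal{A})$ equals $\chi_{1}(\mu,\mathcal{A})$ for some weak-$\ast$ accumulation point $\mu$ of the empirical measures $\tfrac{1}{n}\sum_{k=0}^{n-1}\delta_{T^{k}x}$. This step uses fiber-bunching in an essential way: the canonical holonomies allow the subadditive potential $\log\|\mathcal{A}^{n}\|$ to be compared with additive cocycles along long orbit segments, thereby transferring the pointwise subadditive limit into the measure-theoretic top exponent of the weak-$\ast$ limit.
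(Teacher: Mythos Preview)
Your approach follows the paper's own (very brief) argument in combining the same three ingredients: density of $\mathcal{W}$, Theorem~\ref{closed} for typical cocycles, and the continuity result Theorem~\ref{con1}. The paper simply cites these and concludes; you have tried to make the limiting step explicit via the identification $\vec{L}(\mathcal{A})=\Psi_{\mathcal{A}}(\mathcal{M}(\Sigma,T))$ and a Hausdorff-convergence argument. Two of the steps you add, however, contain genuine gaps.

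First, the ``standard equicontinuity argument'' is not available: each map $\mu\mapsto\chi_1(\mu,\mathcal{A}_n)=\inf_m\tfrac{1}{m}\int\log\|\mathcal{A}_n^m\|\,d\mu$ is merely upper semi-continuous on $\mathcal{M}(\Sigma,T)$, so there is no equicontinuous family of continuous functions with which to upgrade pointwise to uniform convergence; moreover Theorem~\ref{con1} (Backes--Brown--Butler) concerns continuity in the cocycle for a \emph{fixed} ergodic measure with local product structure, and does not assert convergence for every $\mu\in\mathcal{M}(\Sigma,T)$. Second, the identification $\vec{L}(\mathcal{A})=\Psi_{\mathcal{A}}(\mathcal{M}(\Sigma,T))$ is not established. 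The inclusion $\supset$ fails as stated for non-ergodic $\mu$, since $\chi_1(\mu,\mathcal{A})$ is then an average over the ergodic decomposition and need not be realised at any single point of $\Sigma$. For the inclusion $\subset$, fiber-bunching provides bounded distortion only along local stable and unstable leaves, not almost additivity of $\{\log\|\mathcal{A}^n\|\}$ along orbits; the general subadditive argument gives only $\chi_1(x,\mathcal{A})\le\chi_1(\mu,\mathcal{A})$ for weak-$\ast$ limits $\mu$ of the empirical measures of $x$, and your sketch does not explain how the canonical holonomies would produce the reverse inequality.
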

\begin{proof}
Since $\mathcal{W}$ is open and dense in $H_{b}^{r}(X, GL(d, \mathbb R)))$ (see \cite{BV1}), for every $\mathcal{A} \in H_{b}^{r}(X, GL(2, \mathbb R)))$ there is a $\mathcal{A}_{k} \in \mathcal{W}$ such that $\mathcal{A}_{k}\rightarrow \mathcal{A}$.

By Theorem \ref{con1},
\[ \chi_{i}(\mathcal{A}_{k})\rightarrow \chi_{i}(\mathcal{A}) \]
 By Theorem \ref{closed}, $\vec{L}$ is a closed and convex subset of $\R^{2}.$
\end{proof}

\section{Thermodynamic Formalism }\label{3}
\subsection{Convex functions}
Let $U$ be an open convex subset of $\R^{n}$ and $f$ be a real continuous convex function on $U$. We say a vector $a\in \R^{n}$ is a \textit{subgradient} of $f$ at $x$ if for all $z \in U$, 
\[f(z)\geq f(x) +a^{T}(z-x),\]
where the last term on the right-hand side is the scalar product.\\
For each $x\in \R^{n}$ set the \textit{subdifferential} of $f$ at point $x$
\[\partial f(x) :=\{a: a \hspace{0,1cm}\textrm{is a subgradient for} \hspace{0,1cm}f\hspace{0,1cm} \textrm{at}\hspace{0,1cm} x \}.\]
 For $x\in U$, the subdifferential $\partial f(x)$ is
always a nonempty convex compact set. Define $\partial^{e} f(x) := \ext\{\partial f(x)\}$, i.e., the set of exterior points $\partial f(x)$. We say that $f$ is \textit{differentiable} at $x$ if the set $\partial^{e} f(x)$ is  singleton. \\




        %
We define 
\begin{equation}\label{dense222}
\partial f(U)=\cup_{x \in U} \partial f(x) \hspace{0,2cm}\textrm{and}\hspace{0.2cm} \partial^{e} f(U)=\cup_{x \in U} \partial^{e} f(x).
\end{equation} 
\subsection{Legendre transform}\label{lege} Assume that $f:\R^{k} \rightarrow \R \cup \{+\infty\}$ is a convex function that is not identically equal to $-\infty$. The Legendre transform of $f$ is the function $f^{\ast}$ of a new variable $t$, defined by
\[ t\mapsto -f^{\ast}(-t) :=\inf\{f(x)-tx : x\in \R^{k}\},\]
where $tx$ denotes the dot product of $t$ and $x$.

It is  easy to show that $f^{\ast}$ is a convex function and not identically equal to $-\infty$. Let $f^{\ast \ast}$ be the Legendre transform of $f^{\ast}$. 
Assume that $f:\R^{k}\rightarrow \R \cup \{\infty\}$ is convex and not identically equal to $-\infty$. Let $x\in \R^{k}$. Suppose that $f$ is lower semi continuous at $x$, i.e., $\liminf_{y\rightarrow x} f(y)\geq f(x)$. Then $f^{\ast \ast}(x)=f(x).$
Feng and Huang \cite[Corollary  2.5]{FH} proved the following theorem:
\begin{thm}\label{Cor}
Assume that $S$ is a non-empty, convex set in $\R^{k}$ and let $g:S\rightarrow \R$ be a concave function. Set
\[W(x)=\sup\{g(a) + ax : a\in S \}, \hspace{0.3cm}x\in \R^{k}\]
and
\[G(a)=\inf\{W(x) - ax : x\in \R^{k} \}, \hspace{0.3cm}a\in S.\]
Then $G(a)=g(a)$ for $a \in \text{ri}(S)$, where $\text{ri}(S)$ denotes the relative interior of $S$ (cf. \cite{R}).
\end{thm}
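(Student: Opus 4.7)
\textbf{Proof proposal for Theorem \ref{Cor}.} The statement is essentially a Fenchel--Moreau type biconjugate identity for a concave function on a convex set: $W$ is the concave conjugate of (the extension of) $g$, and $G$ is the biconjugate. The plan is to verify the two inequalities separately, with only one of them requiring any real convex-analytic input.

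First I would establish the easy inequality $G(a)\ge g(a)$ for every $a\in S$. By the very definition of $W$, for every $x\in\R^{k}$ one has $W(x)\ge g(a)+ax$, so $W(x)-ax\ge g(a)$; taking the infimum over $x\in\R^{k}$ yields $G(a)\ge g(a)$. Note this step uses nothing beyond the sup/inf formulas and works for all $a\in S$, not just the relative interior.

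The substantive direction is $G(a)\le g(a)$ for $a\in\mathrm{ri}(S)$. Here I would invoke the existence of a \emph{supergradient} for $g$ at any relative interior point: extend $g$ to a proper concave function $\tilde g:\R^{k}\to\R\cup\{-\infty\}$ by setting $\tilde g\equiv-\infty$ off $S$; the effective domain of $\tilde g$ is $S$ with relative interior $\mathrm{ri}(S)$, and a standard result in convex analysis (see, e.g., Rockafellar \cite{R}) guarantees that the superdifferential $\partial\tilde g(a)$ is a nonempty (compact, convex) subset of $\R^{k}$ for every $a\in\mathrm{ri}(S)$. Picking any $x_{0}\in\partial\tilde g(a)$, the supergradient inequality reads
\[
g(b)\le g(a)+x_{0}\cdot(b-a)\qquad\text{for all }b\in S,
\]
equivalently $g(b)+(-x_{0})\cdot b\le g(a)+(-x_{0})\cdot a$ for all $b\in S$. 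Taking the supremum over $b\in S$ gives $W(-x_{0})\le g(a)-x_{0}\cdot a$, so evaluating the infimum defining $G(a)$ at the single point $x=-x_{0}$ yields
\[
G(a)\le W(-x_{0})-a\cdot(-x_{0})\le g(a)-x_{0}\cdot a+x_{0}\cdot a=g(a).
\]
Combined with the first inequality, this gives $G(a)=g(a)$ on $\mathrm{ri}(S)$.

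The only point that requires care, and is really the main (though standard) obstacle, is the existence of a finite supergradient at $a\in\mathrm{ri}(S)$. This is where relative interior is essential: at boundary points of $S$ the superdifferential may be empty or contain only improper supporting functionals, and the conclusion of the theorem can genuinely fail there. Beyond invoking this nonemptiness result from convex analysis, the rest of the argument is a direct unravelling of the definitions of $W$ and $G$.
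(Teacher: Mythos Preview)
Your argument is correct. The paper does not actually supply its own proof of this statement: it is quoted verbatim as \cite[Corollary 2.5]{FH} and used as a black box in the proof of Theorem~\ref{addth1}. Your two-step approach---the trivial inequality $G(a)\ge g(a)$ for all $a\in S$, followed by the reverse inequality via the existence of a supergradient at relative interior points---is the standard Fenchel--Moreau biconjugate argument and matches the way this result is derived in convex analysis (and, in essence, the way Feng and Huang obtain it from Rockafellar's theory). There is nothing to add or correct.
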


\subsection{Topological entropy}\label{top_entropy}

 Assume that $(X, d)$ is a compact metric space and $T:X\rightarrow X$ is a continuous transformation. For any $n\in \N$, we define a new metric $d_{n}$ on $X$ as follows
\begin{equation}\label{new_metric}
 d_{n}(x, y)=\max\{d(T^{k}(x), T^{k}(y)) : k=0,...,n-1\},
\end{equation}
and for any $\epsilon>0$, one can define \textit{Bowen ball} $B_{n}(x, \epsilon)$ that is an open ball of radius $\epsilon>0$ in the metric $d_{n}$ around $x$. That is,
\[ B_{n}(x, \epsilon)=\{y\in X : d_{n}(x, y)<\epsilon\}.\]
 Let $Y \subset X$ and $\epsilon>0$. We say that a countable collection of balls $\mathcal{Y}:=\left\{B_{n_{i}}\left(y_{i}, \epsilon\right)\right\}_{i}$ covers $Y$ if $Y \subset \bigcup_{i} B_{n_{i}}\left(y_{i}, \epsilon\right)$. For $\mathcal{Y}=\left\{B_{n_{i}}\left(y_{i}, \epsilon\right)\right\}_{i}$, put $n(\mathcal{Y})=\min _{i} n_{i}$. Let $s\geq 0$ and define
\[ S(Y, s, N, \epsilon)=\inf \sum_{i} e^{-sn_{i}},\]
 where the  infimum  is  taken  over  all  collections  $\mathcal{Y}=\{B_{n_{i}}(x_{i}, \epsilon)\}_{i}$ covering $Y$  such  that $n(\mathcal{Y})\geq N.$  The quantity $S(Y, s, N, \epsilon)$ does not decrease with $N$, consequently 
 \[ S(Y, s, \epsilon) = \lim_{N\rightarrow \infty} S(Y, s, N, \epsilon).\]
 There is a critical value of the parameter $s$, which we denote by $h_{top}(T,Y, \epsilon)$ such that
\[S(Y, s, \epsilon)=\left\{\begin{array}{ll}
         0, & \mbox{$s>h_{top}(T,Y, \epsilon)$},\\
        \infty, & \mbox{$s<h_{top}(T,Y, \epsilon)$}.\end{array} \right . \] 
        
        Since $h_{top}(T,Y, \epsilon)$ does not decrease with $\epsilon$, the following limit exists,
        \[h_{top}(T,Y)=\lim_{\epsilon \rightarrow 0}(T,Y,\epsilon).\]
        We call $h_{top}(T, Y)$ the \textit{topological  entropy}  of $T$ restricted  to $Y$ or the topological entropy  of $Y$ (we denote $h_{top}(Y)$), as  there  is  no  confusion  about $T$. We denote $h_{top}(X, T)=h_{top}(T).$
        
\subsection{Additive thermodynamic formalism}    
    
 A \textit{potential} on $X$ is a continuous function $f: X \rightarrow \mathbb{R}$.  For any $\epsilon>0$ a set $E \subset X$ is said to be a $(n,\epsilon)$-\textit{separated  subset}  of $X$ if $d_{n}(x,y)> \epsilon$ (see \eqref{new_metric}) for any two different points $x,y \in E$. Using $(n, \varepsilon)$-separated subsets, we can define the pressure $\mathrm{P}(f)$ of $f$ as follows:
$$
P(f):=\lim _{\varepsilon \rightarrow 0} \limsup _{n \rightarrow \infty} \frac{1}{n} \log \sup \left\{\sum_{x \in E} e^{S_{n} f(x)}: E \text { is an }(n, \varepsilon) \text {-separated subset of } X\right\}.
$$
When $f \equiv 0$, the pressure $\mathrm{P}(0)$ is equal to the topological entropy $h_{top}(T)$, which measures the complexity of the system $(X, T)$.

The pressure satisfies the \textit{variational principle}:
$$
P(f)=\sup \left\{h_{\mu}(f)+\int \psi d \mu: \mu \in \mathcal{M}(X,T)\right\}
$$
where $h_{\mu}(f)$ is the measure-theoretic entropy (see \cite{PU}).
Any invariant measure $\mu \in \mathcal{M}(X,T)$ achieving the supremum in the variational principle is called an \textit{equilbrium state} of $f.$ If the entropy map $\mu \mapsto h_{\mu}(T)$ is upper semi-continuous, then every potential has an equilibrium state.   
\subsection{Subadditive thermodynamic formalism}

Let $\Phi=\{\log \phi_{n}\}_{n=1}^{\infty}$ be a subadditive potential over the TDS $(X, T)$. We introduce the $\textit{topological pressure}$ of $\Phi$ as follows. The space $X$ is endowed with a metric $d$.
  We define for a subadditive $\Phi$
\[ P_{n}(T, \Phi, \epsilon)=\sup\{\sum_{x\in E} \phi_{n}(x) : E \hspace{0,1cm}\textrm{is} \hspace{0,1cm}(n, \epsilon) \textrm{-separated subset of X} \}.\]
Since $P_{n}(T, \Phi, \epsilon)$ is a decreasing function of $\epsilon$, We define 
 \[P(T, \Phi, \epsilon)=\limsup_{n \rightarrow \infty} \frac{1}{n} \log P_{n}(T, \Phi, \epsilon),\] and
\[ P_{\Phi}(T)=\lim_{\epsilon \rightarrow 0}P(T, \Phi, \epsilon).\]
We call $P_{\Phi}(T)$ the topological pressure of $\Phi$. We denote by $P_{\Phi}(q)$ the topological pressure for a subadditive potential $q\Phi.$

Bowen \cite{B2} showed that for any H\"older continuous $\psi :X \rightarrow \R$ on a transitive hyperbolic set $(X, T)$ there exists a unique equilibrium measure $\mu$ (which is also a Gibbs state) for the additive potential $ \psi$.

Feng and K\"aenm\"aki  \cite{FK} extended the Bowen's result for subadditive potentials $t\Phi$ on a locally constant cocycle under the assumption that the matrices in $\mathcal{A}$ do not preserve a common proper subspace of $\R^{d}$(i.e. $(T, \mathcal{A})$ is irreducible).

Let $(X, T)$ be a topological dynamical system, and let $\Phi=\{\log \phi_{n}\}_{n=1}^{\infty}$ be a subadditive potential over the TDS $(X, T)$. Define
\[ \alpha(\Phi):=\liminf_{n\rightarrow \infty}\frac{1}{n} \log \inf_{x \in X}\phi_{n}(x), \hspace{0.1cm} \beta(\Phi):=\lim_{n\rightarrow \infty}\frac{1}{n} \log \sup_{x \in X}\phi_{n}(x).\]

Let $ \vec{q} = (q_{1}, ..., q_{k})\in \R_{+}^{k}$, and $\vec{\Phi}=(\Phi_{1},...,\Phi_{k})=(\{\log \phi_{n,1}\}_{n=1}^{\infty},...,\{\log \phi_{n,k}\}_{n=1}^{\infty})$. Assume that $\vec{q}.\vec{\Phi}=\sum_{i=1}^{k} q_{i}\Phi_{i}$ is a subadditive potential $\{q_{i}\log \phi_{n, i}\}_{n=1}^{\infty}.$ We can write topological pressure, maximal Lyapunov exponent, and minimal Lyapunov exponent of $\vec{\Phi}$, respectively
\[ P_{\vec{\Phi}}(\vec{q})=P(T,\vec{q}.\vec{\Phi}),\hspace{0.3cm} \vec{\beta}(\vec{\Phi})=\beta(\sum_{i=1}^{k} \Phi_{i}),\hspace{0.3cm}  \vec{\alpha}(\vec{\Phi})=\alpha(\sum_{i=1}^{k} \Phi_{i}).\]

For $\mu \in \mathcal{M}(X, T)$, we write 
\[ \mathcal{\chi}(\mu, \vec{\Phi})=(\chi(\mu, \Phi_{1}),...., \chi(\mu, \Phi_{k})),\]
where $\chi(\mu, \Phi_{i})=\lim_{n\rightarrow \infty}\frac{1}{n}\int \log \phi_{n, i}(x)d\mu(x)$ for $i=1,...,k.$ 


 Cao, Feng and Huang \cite{CFH} proved the variational principle formula for subadditive potentials.
 \begin{thm}[{{\cite[Theorem 1.1]{CFH}}}]\label{var1} 
Let $(X,T)$ be a topological dynamical system such that $h_{top}(T)<\infty$. For $\vec{t}\in \R_{+}^{d}$, suppose that $\vec{\Phi}$ is a subadditive potential on the compact metric space $X$. Then
\[ P_{\vec{\Phi}}(\vec{t})=\sup\{h_{\mu}(T)+\vec{t}.\chi(\mu, \vec{\Phi}) 
: \mu \in \mathcal{M}(X,T) , \chi(\mu, \vec{\Phi}) \neq \infty \}.\]
\end{thm}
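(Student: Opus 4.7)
The plan is to reduce Theorem \ref{var1} to the scalar subadditive variational principle and prove the latter in two inequalities via the classical Misiurewicz--Katok strategy adapted to subadditive potentials.

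The reduction is immediate: for $\vec{t}=(t_1,\ldots,t_d)\in\R_+^d$ and each $\Phi_i=\{\log\phi_{n,i}\}$ subadditive, the functions $\psi_n:=\prod_{i=1}^d \phi_{n,i}^{t_i}$ are positive and continuous, and submultiplicativity is preserved since $t_i\geq 0$; thus $\Psi:=\vec{t}\cdot\vec{\Phi}=\{\log\psi_n\}$ is a subadditive potential with $P(T,\Psi)=P_{\vec\Phi}(\vec{t})$ and $\chi(\mu,\Psi)=\vec{t}\cdot\chi(\mu,\vec{\Phi})$ by linearity. So it suffices to prove $P(T,\Psi)=\sup_\mu\{h_\mu(T)+\chi(\mu,\Psi)\}$ for $\mu\in\mathcal{M}(X,T)$.

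For the easy inequality $h_\mu(T)+\chi(\mu,\Psi)\leq P(T,\Psi)$, fix $\epsilon>0$ and a finite measurable partition $\alpha$ of $X$ with $\diam(\alpha)<\epsilon$, so that every atom of $\alpha^n=\bigvee_{i=0}^{n-1}T^{-i}\alpha$ has $d_n$-diameter less than $\epsilon$. Choose representatives $x_A\in A$; the elementary inequality $H_\mu(\alpha^n)+\sum_A\mu(A)\log\psi_n(x_A)\leq\log\sum_A\psi_n(x_A)$ bounds the right side by $\log P_n(T,\Psi,\epsilon)$. Dividing by $n$, sending $n\to\infty$, and then refining $\alpha$, the left side converges to $h_\mu(T)+\chi(\mu,\Psi)$ after using Kingman's theorem to identify the limit of $\tfrac{1}{n}\int\log\psi_n\,d\mu$ and the uniform continuity of each $\log\psi_n$ in the refinement.

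The hard direction proceeds by a Misiurewicz--Katok construction: choose $(n_k,\epsilon)$-separated sets $E_k$ realizing $P(T,\Psi,\epsilon)$, form the weighted empirical measures $\sigma_k=Z_k^{-1}\sum_{x\in E_k}\psi_{n_k}(x)\delta_x$ with $Z_k=\sum_{x\in E_k}\psi_{n_k}(x)$, average as $\mu_k=\tfrac1{n_k}\sum_{i=0}^{n_k-1}T^i_*\sigma_k$, and extract a weak-$*$ limit $\mu$, which is automatically $T$-invariant. The entropy portion $h_\mu(T,\alpha)$ for a partition $\alpha$ with $\mu(\partial\alpha)=0$ emerges from the standard atom-counting argument, while the potential portion requires the subadditive telescoping bound $\log\psi_{n_k}(x)\leq\sum_{i=0}^{\lfloor n_k/\ell\rfloor-1}\log\psi_\ell(T^{i\ell}x)+O(\ell)$ for any fixed $\ell$; integrating against $\sigma_k$ and passing to the weak-$*$ limit yields $\limsup\tfrac{1}{n_k}\log Z_k \leq h_\mu(T,\alpha)+\tfrac1\ell\int\log\psi_\ell\,d\mu$, and then $\ell\to\infty$ produces $\chi(\mu,\Psi)$ via the identity $\chi(\mu,\Psi)=\inf_\ell\tfrac1\ell\int\log\psi_\ell\,d\mu$.

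The main technical obstacle is precisely this last step. In the additive case, $\tfrac1n\log\psi_n$ is a Birkhoff average of a single continuous function, so it is integrable against weak-$*$ limits; in the subadditive case it is not, and only approximate submultiplicative comparisons to the fixed continuous $\log\psi_\ell$ are available. Controlling the $\ell$-dependence and absorbing the boundary losses in the telescoping while retaining weak-$*$ continuity is the technical core of the Cao--Feng--Huang argument.
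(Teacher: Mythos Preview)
The paper does not give its own proof of this statement; it is simply quoted from Cao--Feng--Huang \cite{CFH} and used as a black box. Your outline is essentially a reconstruction of the original \cite{CFH} argument: reduce to a single scalar subadditive potential $\Psi=\vec t\cdot\vec\Phi$, prove the upper bound by a partition/Jensen inequality, and obtain the lower bound via the Misiurewicz construction with the subadditive telescoping $\log\psi_{n_k}\le\sum_{i}\log\psi_\ell\circ T^{i\ell}+O(\ell)$ followed by $\ell\to\infty$.

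One caution on your easy direction: the step ``$\sum_A\psi_n(x_A)\le P_n(T,\Psi,\epsilon)$'' is not immediate, because representatives of distinct atoms of $\alpha^n$ need not form an $(n,\epsilon)$-separated set (adjacent atoms of $\alpha$ can have arbitrarily close points even when $\diam(\alpha)<\epsilon$). In \cite{CFH} this inequality is obtained differently, by reducing to ergodic $\mu$ and combining the Shannon--McMillan--Breiman theorem with Kingman's theorem to produce a genuinely $(n,\epsilon)$-separated set of the right size and weight; the partition/Jensen shortcut you wrote works cleanly only for expansive systems or after first passing through a spanning-set formulation of pressure. Apart from this point your sketch tracks the Cao--Feng--Huang proof.
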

 Let $\vec{t}\in \R_{+}^{d}$, we denote by $\Eq(\vec{\Phi}, \vec{t})$ the collection of invariant measures $\mu$ such that 
\[ h_{\mu}(T)+\vec{t}.\chi(\mu, \vec{\Phi})= P_{\vec{\Phi}}(\vec{t}).\]

If $\Eq(\vec{\Phi}, \vec{t})\neq \emptyset$, then each element $\Eq(\vec{\Phi}, \vec{t})$ is called an $\textit{equilibrium state}$ for $\vec{t}.\vec{\Phi}$.

In the remaining part of this section, we recall some theorems about multifractal formalism for subadditive potentials.

\begin{thm}[{{\cite[Theorem 1.1]{FH}}}]\label{level set1}
Let $(X,T)$ be a topological dynamical system such that the topological entropy $h_{top}(T)$ is finite.  Then  $E(\beta(\Phi))\neq \emptyset$ . Moreover,
\begin{align*}
 h_{top}(T,E(\beta(\Phi))) &=\sup\{h_{\mu}(T) : \mu \in \mathcal{M}(X,T), \chi(\mu, \Phi) =\beta(\Phi) \}\\
 =&\sup\{h_{\mu}(T) : \mu \in \mathcal{E}(X,T), \chi(\mu, \Phi) =\beta(\Phi) \}.\\
\end{align*}  
\end{thm}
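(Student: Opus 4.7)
My plan is to prove the three assertions of Theorem \ref{level set1} separately: first, the reduction from the supremum over all invariant measures to the supremum over ergodic measures; second, the non-emptiness of $E(\beta(\Phi))$ together with the lower bound $h_{top}(T,E(\beta(\Phi))) \geq \sup\{h_\mu(T) : \chi(\mu,\Phi)=\beta(\Phi)\}$; and finally, the matching upper bound.

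For the reduction to ergodic measures, I would invoke the ergodic decomposition $\mu = \int \mu_\omega \, d\tau(\omega)$. Since $\chi(\mu_\omega,\Phi)\leq \beta(\Phi)$ always and $\chi(\mu,\Phi) = \int \chi(\mu_\omega,\Phi)\, d\tau(\omega)$, the assumption $\chi(\mu,\Phi)=\beta(\Phi)$ forces $\chi(\mu_\omega,\Phi)=\beta(\Phi)$ for $\tau$-almost every $\omega$; combined with the affine decomposition $h_\mu(T) = \int h_{\mu_\omega}(T)\, d\tau(\omega)$, an essentially supremal component delivers an ergodic measure satisfying the same Lyapunov condition with metric entropy at least $h_\mu(T)$.

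For non-emptiness and the lower bound, I would first produce a $\Phi$-maximizing measure: choose $x_n \in X$ and $k_n \to \infty$ with $\tfrac{1}{k_n}\log\phi_{k_n}(x_n) \to \beta(\Phi)$, form the empirical measures $\mu_n := \tfrac{1}{k_n}\sum_{j=0}^{k_n-1}\delta_{T^j x_n}$, and extract a weak-$*$ limit $\mu \in \mathcal{M}(X,T)$. A standard subadditivity argument (using that $\tfrac{1}{m}\log\phi_m$ is a continuous upper envelope decreasing to $\chi(\,\cdot\,,\Phi)$ when integrated against invariant measures) yields $\chi(\mu,\Phi) \geq \beta(\Phi)$, hence equality. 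By the previous paragraph I may take $\mu$ ergodic; Kingman's subadditive ergodic theorem then gives $\mu(E(\beta(\Phi)))=1$, so $E(\beta(\Phi))$ is non-empty. The Carath\'eodory-style definition of $h_{top}(T,\cdot)$ combined with the Brin--Katok formula gives $h_{top}(T,E(\beta(\Phi)))\geq h_\mu(T)$ for every such ergodic $\mu$, and the lower bound follows.

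The heart of the proof, and the main obstacle, is the upper bound. Given $s > h_{top}(T,E(\beta(\Phi)))$ and $\epsilon>0$, there exist covers $\{B_{n_i}(y_i,\epsilon)\}$ of $E(\beta(\Phi))$ with $\sum_i e^{-sn_i}$ arbitrarily small and $\min_i n_i$ arbitrarily large; by enlarging $\epsilon$ slightly, one may assume each $y_i\in E(\beta(\Phi))$, so that $\phi_{n_i}(y_i)\approx e^{n_i\beta(\Phi)}$ for all selected centers. Weighting each orbit segment by a Gibbs-type mass $\phi_{n_i}(y_i)\,e^{-sn_i}/Z$ and averaging along the forward orbit, one constructs probability measures $\nu_N$ whose weak-$*$ accumulation points $\mu$ are $T$-invariant. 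A Misiurewicz-style partition-of-unity calculation shows that $h_\mu(T) + \chi(\mu,\Phi) \geq s + \beta(\Phi)$, while the subadditive variational principle (Theorem \ref{var1}) forces $\chi(\mu,\Phi)\leq \beta(\Phi)$; together these yield $\chi(\mu,\Phi)=\beta(\Phi)$ and $h_\mu(T)\geq s$. The delicate point, which I expect to absorb most of the technical work, is the simultaneous control of the entropy estimate (demanding fine partitions) and the subadditive Lyapunov estimate (demanding passage $m\to\infty$ in $\tfrac{1}{m}\log\phi_m$ before partitioning), since unlike additive potentials $\Phi$ is not a single continuous observable: reconciling the two scales requires the same two-stage approximation that underlies the Cao--Feng--Huang proof of Theorem \ref{var1}.
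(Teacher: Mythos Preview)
The paper does not prove this theorem at all: it is quoted verbatim as \cite[Theorem 1.1]{FH} and used as a black box. So there is no ``paper's own proof'' to compare your proposal to, and in a paper following this one you would likewise simply cite Feng--Huang.

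That said, your sketch has a genuine gap in the upper bound. You take $s>h_{top}(T,E(\beta(\Phi)))$, use the \emph{smallness} of $\sum_i e^{-sn_i}$ over a cover, and then claim to build an invariant $\mu$ with $\chi(\mu,\Phi)=\beta(\Phi)$ and $h_\mu(T)\geq s$. If that construction worked, combined with the lower bound $h_{top}(T,E(\beta(\Phi)))\geq\sup\{h_\mu(T):\chi(\mu,\Phi)=\beta(\Phi)\}$ you already established, you would obtain $s\leq h_\mu(T)\leq h_{top}(T,E(\beta(\Phi)))<s$, a contradiction for \emph{every} such $s$; in particular your argument would prove that no such $s$ exists, i.e.\ $h_{top}(T,E(\beta(\Phi)))=+\infty$. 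The direction is simply backwards: a Misiurewicz-type construction extracts a measure of large entropy from \emph{large} partition sums over separated sets, not from small cover sums. The Carath\'eodory definition of $h_{top}(T,Z)$ for non-invariant $Z$ does not give you separated sets to work with, so the Misiurewicz machinery does not transplant directly.

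The route actually taken in \cite{FH} for the upper bound is different and avoids this difficulty: one shows that for every $x\in E(\beta(\Phi))$ every weak-$*$ limit point of the empirical measures $\tfrac1n\sum_{j=0}^{n-1}\delta_{T^jx}$ lies in the (compact) set $K:=\{\mu\in\mathcal{M}(X,T):\chi(\mu,\Phi)=\beta(\Phi)\}$, and then invokes a Bowen-type lemma bounding $h_{top}(T,\{x:V(x)\subset K\})$ by $\sup_{\mu\in K}h_\mu(T)$. Your lower bound and ergodic-decomposition steps, by contrast, are correct and essentially the standard ones.
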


The topological pressure is related to Lyapunov exponents in the following way.

\begin{prop}[{{\cite[Theorem 3.3]{FH}}}]\label{eq11}
Let $(X, T)$ be a topological dynamical system such that the  entropy  map $\mu\mapsto h_{\mu}(T)
$ is upper semi-continuous and $h_{top}(T)<\infty$. For $t\in \R_{+}^{k}$, suppose that $\vec{t}.\vec{\Phi}$ is a subadditive potential on the compact metric space $X$. Then,
\begin{equation}\label{subadditive}
\partial P_{\vec{\Phi}}(\vec{t}) =\{\chi(\mu_{\vec{t}}, \vec{\Phi}) : \mu\in \Eq(\vec{\Phi}, \vec{t})\}.
\end{equation} 
Moreover,  $\Eq(\vec{\Phi}, \vec{t})$ is  a  non-empty  compact  convex  subset  of $\mathcal{M}(X,T)$, for any  $t\in \R_{+}^{k}$. 
\end{prop}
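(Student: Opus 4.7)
My plan is to first establish the properties of $\Eq(\vec{\Phi},\vec{t})$, and then to derive the characterization of $\partial P_{\vec\Phi}(\vec t)$ by combining the variational principle (Theorem \ref{var1}) with the Legendre duality recalled in Subsection \ref{lege} (in particular Theorem \ref{Cor}).

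First I would verify that the functional
\[
F_{\vec t}(\mu):=h_\mu(T)+\vec t\cdot \chi(\mu,\vec\Phi),\qquad \mu\in\mathcal M(X,T),
\]
is upper semi-continuous, affine, and bounded above on the weak-$\ast$ compact set $\mathcal M(X,T)$. Upper semi-continuity of $\mu\mapsto h_\mu(T)$ is assumed; for each subadditive component the subadditive Fekete-type identity gives $\chi(\mu,\Phi_i)=\inf_n\frac1n\int\log\phi_{n,i}\,d\mu$, which is an infimum of weak-$\ast$ continuous affine functionals, hence u.s.c. Since $\vec t\in\R_+^k$, taking the non-negative linear combination preserves upper semi-continuity, and the affinity of $h_\mu(T)$ and of each $\chi(\mu,\Phi_i)$ (the latter by Kingman's theorem and ergodic decomposition) yields affinity of $F_{\vec t}$. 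Non-emptiness of $\Eq(\vec\Phi,\vec t)$ then follows because an u.s.c.\ function attains its supremum on a compact set, which together with Theorem \ref{var1} says the supremum equals $P_{\vec\Phi}(\vec t)$; compactness of $\Eq(\vec\Phi,\vec t)$ is clear since it is the preimage of a maximum under an u.s.c.\ map on a compact space; convexity is immediate from affinity.

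Next I would prove the inclusion $\{\chi(\mu,\vec\Phi):\mu\in\Eq(\vec\Phi,\vec t)\}\subseteq \partial P_{\vec\Phi}(\vec t)$ by direct computation. Given $\mu\in\Eq(\vec\Phi,\vec t)$, set $\vec\alpha=\chi(\mu,\vec\Phi)$. For any $\vec s\in\R^k$, the variational principle yields
\[
P_{\vec\Phi}(\vec s)\ge h_\mu(T)+\vec s\cdot\chi(\mu,\vec\Phi)=P_{\vec\Phi}(\vec t)+(\vec s-\vec t)\cdot\vec\alpha,
\]
so $\vec\alpha$ is a subgradient of the convex function $P_{\vec\Phi}$ at $\vec t$.

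For the reverse inclusion I would use Legendre duality. Set
\[
G(\vec\alpha):=\sup\bigl\{h_\mu(T):\mu\in\mathcal M(X,T),\ \chi(\mu,\vec\Phi)=\vec\alpha\bigr\}
\]
on the convex set $S:=\{\chi(\mu,\vec\Phi):\mu\in\mathcal M(X,T)\}\subset\R^k$. The variational principle rewrites as
\[
P_{\vec\Phi}(\vec t)=\sup_{\vec\alpha\in S}\bigl(G(\vec\alpha)+\vec t\cdot\vec\alpha\bigr),
\]
so $P_{\vec\Phi}$ is the Legendre transform of $-G$, and Theorem \ref{Cor} gives the dual inversion on the relative interior of $S$. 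A subgradient $\vec\alpha\in\partial P_{\vec\Phi}(\vec t)$ is precisely a point where the concave envelope is supported by the affine function $\vec s\mapsto P_{\vec\Phi}(\vec t)+(\vec s-\vec t)\cdot\vec\alpha$, i.e., where $G(\vec\alpha)+\vec t\cdot\vec\alpha=P_{\vec\Phi}(\vec t)$. To promote this to an \emph{attained} maximum, I would take a maximizing sequence $\mu_n$ with $\chi(\mu_n,\vec\Phi)\to\vec\alpha$ and $h_{\mu_n}(T)\to G(\vec\alpha)$, extract a weak-$\ast$ limit $\mu$, and use upper semi-continuity of $h_\mu(T)$ and of each $\chi(\mu,\Phi_i)$ to obtain $h_\mu(T)+\vec t\cdot\chi(\mu,\vec\Phi)\ge P_{\vec\Phi}(\vec t)$. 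Since the reverse inequality is automatic from the variational principle, $\mu\in\Eq(\vec\Phi,\vec t)$ and $\chi(\mu,\vec\Phi)=\vec\alpha$, as desired.

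The main obstacle is the last step: upper semi-continuity of $\vec\alpha\mapsto\chi(\mu,\vec\Phi)$ gives $\chi(\mu,\vec\Phi)\ge\vec\alpha$ componentwise, but the opposite inequality is not automatic, and strict inequality could a priori allow $\mu$ to realize a larger value $F_{\vec t}(\mu)>P_{\vec\Phi}(\vec t)$, contradicting the variational principle. I would resolve this by exploiting the subgradient inequality with perturbations $\vec t\pm\epsilon e_i$ (possibly arguing first in $\mathrm{ri}(S)$): the asymptotic behavior of $P_{\vec\Phi}$ along coordinate directions forces $\chi(\mu,\Phi_i)=\alpha_i$ for every $i$, so both $h$ and $\chi$ have to saturate simultaneously at the limit point, which is exactly what we need.
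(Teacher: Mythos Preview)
The paper does not give its own proof of this proposition: it is simply quoted from Feng--Huang \cite[Theorem~3.3]{FH}, so there is nothing to compare your argument against at the level of strategy. Let me therefore just assess your proposal on its own.

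Your treatment of the ``easy'' parts is fine: upper semi-continuity and affinity of $\mu\mapsto h_\mu(T)+\vec t\cdot\chi(\mu,\vec\Phi)$ give non-emptiness, compactness and convexity of $\Eq(\vec\Phi,\vec t)$, and the inclusion $\{\chi(\mu,\vec\Phi):\mu\in\Eq(\vec\Phi,\vec t)\}\subset\partial P_{\vec\Phi}(\vec t)$ follows exactly as you wrote.

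The reverse inclusion, however, has a genuine gap that your last paragraph does not close. Suppose $\vec\alpha\in\partial P_{\vec\Phi}(\vec t)$ and you run your limiting argument to produce $\mu\in\Eq(\vec\Phi,\vec t)$ with $\chi(\mu,\Phi_i)\ge\alpha_i$ for every $i$. From $h_\mu(T)+\vec t\cdot\chi(\mu,\vec\Phi)=P_{\vec\Phi}(\vec t)=G(\vec\alpha)+\vec t\cdot\vec\alpha$ and $h_\mu(T)\ge G(\vec\alpha)$ you only get $\sum_i t_i\bigl(\chi(\mu,\Phi_i)-\alpha_i\bigr)=0$, hence $\chi(\mu,\Phi_i)=\alpha_i$ \emph{only for those $i$ with $t_i>0$}. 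For $\vec t$ on the boundary of $\R_+^k$ the remaining coordinates are uncontrolled, and your suggested fix via perturbations $\vec t\pm\epsilon e_i$ is problematic precisely there: the variational principle of Theorem~\ref{var1} is stated for $\vec t\in\R_+^k$, so moving in the $-e_i$ direction may take you outside the region where $P_{\vec\Phi}$ is even defined as a sup of $h_\mu+\vec t\cdot\chi$. A second, independent issue is that invoking Theorem~\ref{Cor} to get $G(\vec\alpha)+\vec t\cdot\vec\alpha=P_{\vec\Phi}(\vec t)$ requires $\vec\alpha\in\mathrm{ri}(S)$, which you have not established for an arbitrary subgradient.

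The argument that actually appears in \cite{FH} avoids these difficulties: one first shows that every \emph{extreme} point of the compact convex set $\partial P_{\vec\Phi}(\vec t)$ is a limit $\lim_n\nabla P_{\vec\Phi}(\vec t_n)$ along differentiability points $\vec t_n\to\vec t$; at each such $\vec t_n$ the (unique) equilibrium state $\mu_n$ satisfies $\chi(\mu_n,\vec\Phi)=\nabla P_{\vec\Phi}(\vec t_n)$, and any weak-$\ast$ limit $\mu$ lies in $\Eq(\vec\Phi,\vec t)$ with $\chi(\mu,\vec\Phi)$ equal to the limiting gradient. Convex combinations then realise the non-extreme subgradients. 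If you want to salvage your Legendre-transform route, you would need to supply a separate argument for boundary $\vec t$ and for subgradients lying on $\partial S$; as written, the proof is incomplete.
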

 
\begin{thm}[{{\cite[Theorem 4.8]{FH}}}]\label{derivative}
Keep the assumption of Theorem (\ref{level set1}), we also assume that the entropy map $\mu \mapsto h_{\mu}(T)$ is upper semi-continuous on $\mathcal{M}(X,T)$.  If $t \in \R_{+}^{k}$ such that $\vec{t}. \vec{\Phi}$ has a unique equilibrium  state $\mu_{\vec{t}}\in \mathcal{M}(X,T)$,  then $\mu_{\vec{t}}$ is ergodic, $\nabla P_{\vec{\Phi}}(\vec{t}) = \chi(\mu_{\vec{t}}, \vec{\Phi})$, $E(\nabla P_{\vec{\Phi}}(\vec{t}))\neq \emptyset$ and $h_{top}(T,E(\nabla P_{\vec{\Phi}}(\vec{t}))) =h_{\mu_{\vec{t}}}(T).$ 
\end{thm}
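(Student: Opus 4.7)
The plan is to verify the four assertions in order, leveraging Proposition \ref{eq11} and the subadditive variational principle (Theorem \ref{var1}).

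First I would establish ergodicity and differentiability. The functional $\mu \mapsto h_{\mu}(T) + \vec{t}\cdot\chi(\mu, \vec{\Phi})$ is affine on $\mathcal{M}(X, T)$: entropy is affine, and each $\chi(\cdot, \Phi_{i})$ is affine because $\frac{1}{n}\int \log\phi_{n,i}\, d\mu$ is affine in $\mu$ and the (decreasing) limit preserves affinity. Hence $\Eq(\vec{\Phi}, \vec{t})$ is a face of $\mathcal{M}(X, T)$, so any extreme point of this face is extreme in $\mathcal{M}(X, T)$ and therefore ergodic; uniqueness forces $\mu_{\vec{t}}$ to be that sole extreme point. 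The differentiability claim $\nabla P_{\vec{\Phi}}(\vec{t}) = \chi(\mu_{\vec{t}}, \vec{\Phi})$ then follows immediately from \eqref{subadditive}, since uniqueness of the equilibrium state collapses the subdifferential to a single point, and a convex function with a singleton subdifferential is differentiable there.

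Next I would show non-emptiness of the level set. Since each $\Phi_{i}$ is subadditive and $\mu_{\vec{t}}$ is ergodic, Kingman's subadditive ergodic theorem gives, for $\mu_{\vec{t}}$-almost every $x$ and each $i$,
\[
\lim_{n\to\infty} \frac{1}{n}\log \phi_{n,i}(x) = \chi(\mu_{\vec{t}}, \Phi_{i}).
\]
A set of full $\mu_{\vec{t}}$-measure therefore lies in $E(\nabla P_{\vec{\Phi}}(\vec{t}))$, which is consequently non-empty.

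The heart of the argument is the entropy identity $h_{top}(T, E(\nabla P_{\vec{\Phi}}(\vec{t}))) = h_{\mu_{\vec{t}}}(T)$. For the lower bound I would use the fact that any Borel set of full measure under an ergodic invariant measure $\mu$ has topological entropy at least $h_{\mu}(T)$ (a consequence of the Brin--Katok formula), together with the previous step showing $\mu_{\vec{t}}(E(\nabla P_{\vec{\Phi}}(\vec{t}))) = 1$. For the upper bound, write $\vec\alpha = \nabla P_{\vec{\Phi}}(\vec{t})$ and observe that every $x \in E(\vec\alpha)$ satisfies $\prod_{i}\phi_{n,i}(x)^{t_{i}} \geq e^{n(\vec{t}\cdot\vec\alpha - \varepsilon)}$ for all sufficiently large $n$. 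Covering $E(\vec\alpha)$ by Bowen balls and comparing the resulting sum to the definition of $P_{\vec{\Phi}}(\vec{t})$ via $(n,\varepsilon)$-separated sets yields
\[
h_{top}(T, E(\vec\alpha)) \leq P_{\vec{\Phi}}(\vec{t}) - \vec{t}\cdot\vec\alpha,
\]
and the right-hand side equals $h_{\mu_{\vec{t}}}(T)$ because $\mu_{\vec{t}}$ is an equilibrium state and $\chi(\mu_{\vec{t}}, \vec{\Phi}) = \vec\alpha$ by step one.

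The hard part will be the upper bound in the entropy identity: matching the topological entropy of the typically non-compact, non-invariant level set with the measure-theoretic entropy $h_{\mu_{\vec{t}}}(T)$ requires careful control of the subadditive quantities along points of $E(\vec\alpha)$ that need not be generic for $\mu_{\vec{t}}$, so one cannot directly invoke Shannon--McMillan--Breiman. The estimate must be extracted from the pressure through a Katok-style covering argument combined with the uniform lower bound on $\prod_{i}\phi_{n,i}^{t_{i}}$ available on $E(\vec\alpha)$, which is precisely where the positivity hypothesis $\vec{t} \in \R_{+}^{k}$ enters in an essential way.
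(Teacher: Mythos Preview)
The paper does not supply its own proof of this theorem: it is quoted verbatim as \cite[Theorem~4.8]{FH} and used as a black box in the surrounding arguments. There is therefore nothing in the present paper to compare your proposal against.

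That said, your outline is correct and is essentially the argument Feng and Huang give in \cite{FH}. Ergodicity and differentiability follow exactly as you say from affinity of the free-energy functional together with Proposition~\ref{eq11}; non-emptiness of the level set is immediate from Kingman's theorem applied to the ergodic $\mu_{\vec t}$; the lower bound $h_{top}(T,E(\vec\alpha))\ge h_{\mu_{\vec t}}(T)$ is the standard Bowen/Brin--Katok fact that a full-measure set for an ergodic measure carries at least the metric entropy; and the upper bound is obtained, as you describe, by stratifying $E(\vec\alpha)$ according to how fast the convergence $\frac{1}{n}\log\phi_{n,i}(x)\to\alpha_i$ sets in, covering each stratum by Bowen balls, and comparing with the separated-set definition of $P_{\vec\Phi}(\vec t)$. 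Your identification of the upper bound as the delicate step, and of the role of $\vec t\in\R_+^k$ in making the inequality go the right way, is accurate.
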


\subsection{Thermodynamic formalism of linear cocycles}
Our motivation for studying of subadditive (or almost additive) thermodynamic formalism is to deal with linear cocycles. In this subsection, we recall some definitions and results that were just proved for linear cocycles.

We study \textit{ergodic optimization} of Lyapunov exponents. Ergodic optimization of Lyapunov exponents is concerned invariant measures in maximizing (or minimizing) the Lyapunov exponents. They were first considered by Rota and Strang \cite{RS} and by Gurvits \cite{GU}, respectively. The associated growth rates are called upper joint spectral radius and lower joint spectral radius, respectively; they play an important role in Control Theory (see \cite{J}, \cite{Bo}).

Let $(X,T)$ be a topological dynamical system and let $\mathcal{A}:X\to GL(k,\R)$ be a matrix cocycle over the topological dynamical system $(X,T).$

We define the \textit{maximal Lyapunov exponent} of linear cocycles as follows
\[ \beta(\mathcal{A}):=\lim_{n\rightarrow \infty}\frac{1}{n}\log \sup_{x\in X} \|\mathcal{A}^{n}(x)\|.\]

 Morris \cite{Mor10} also showed 
\begin{equation}\label{invariant1}
\beta(\mathcal{A})=\sup_{\mu \in \mathcal{M}(X,T)} \chi(\mu, \mathcal{A}).
\end{equation}

Feng and Huang \cite{FH} gave a different proof of it.

Let us define the set of \textit{maximizing measures} of $\mathcal{A}$ to be the set of measures on $X$ given by 
\[ \mathcal{M}_{\max}(\mathcal{A}):=\{\mu \in \mathcal{M}(X, T), \hspace{0,2cm} \beta(\mathcal{A})=\chi(\mu, \mathcal{A}) \}. \]
Author \cite{Moh21} showed that if a $SL(2, \mathbb{R})$ one-step cocycle satisfies pinching and twisting conditions and there exist strictly invariant cones whose images do not overlap on the Mather set then the Lyapunov-maximizing measures have zero entropy.

We also define the \textit{minimal Lyapunov exponents} of linear cocycles as follows 

\[ \alpha(\mathcal{A}):=\liminf_{n\rightarrow \infty}\frac{1}{n}\log \inf_{x\in X} \|\mathcal{A}^{n}(x)\|.\]

Similarly, the set of minimizing measures is defined as follows
\[ \mathcal{M}_{\min}(\mathcal{A}):=\{\mu \in \mathcal{M}(X, T), \hspace{0,2cm} \alpha(\mathcal{A})=\chi(\mu, \mathcal{A}) \}. \]

We remark that supremum (\ref{invariant1}) is attained, so $\mathcal{M}_{\max}$ is non-empty set. But, $\mathcal{M}_{\min}$ is not necessarily non-empty; see Section \ref{5}.

Let $\log^{+} \|\mathcal{A}\| \in L^{1}(\mu)$. We define sum of top $l$ Lyapunov exponents of measure as follows
\[ \chi_{l}(\mu, \mathcal{A}):=\lim_{n\rightarrow \infty}\frac{1}{n} \int \log \varphi_{\mathcal{A}^{\wedge l}, n}(x) d\mu(x),\]
for $\mu \in \mathcal{M}(X,T)$.

We are mainly concerned with the distribution of the Lyapunov exponents of $\mathcal{A}$.  More precisely, for any $\alpha \in \R$, define
\[ E_{\mathcal{A}}(\alpha) =\{x\in X, \chi_{1}(\mathcal{A})=\alpha \}, \]which is called the $\alpha$-level set of $\chi_{1}(\mathcal{A}).$

We also define the higher dimensional of level set of all of Lyapunov exponents as follows
\[ E_{\mathcal{A}}(\vec{\alpha}) =\{x\in X, \chi_{l}(\mathcal{A})=\alpha_{l} \}, \]
for $1\leq l \leq k.$

We denote $E(\alpha)=E_{\mathcal{A}}(\alpha)$, when there is no confusion about $\mathcal{A}.$


We denote \[\vec{\Phi}_{\mathcal{A}}:=(\{\log \varphi_{\mathcal{A},n}\}_{n=1}^{\infty}, \{\log \varphi_{\mathcal{A}^{\wedge 2},n}\}_{n=1}^{\infty},...,\{\log \varphi_{\mathcal{A}^{\wedge k}, n}\}_{n=1}^{\infty} ).\]

 We say that $\vec{\Phi}_{\mathcal{A}}$ is \textit{(simultaneously) quasi-multiplicative} if there exist $C>0$ and $m\in \N$ such that for every $I, J \in \mathcal{L}$, there exists $K\in \mathcal{L}$ with $|K|\leq m$ such that $IKJ \in \mathcal{L}$ and
\[\|\mathcal{A}^{\wedge i}(IKJ)\|\geq C \|\mathcal{A}^{\wedge i}(I)\| \|\mathcal{A}^{\wedge i}(J)\|,\]
for $1\leq i \leq k.$

Park \cite{P} proved quasi-multiplicativity
for typical cocycles $\mathcal{W}$. The approach has
its roots in previous work of Feng \cite[Proposition 2.8]{F} who showed quasi-multiplicativity for locally constant cocycles under the irreducibility assumption.

\begin{thm}[{{\cite[Theorem F]{P}}}]\label{quasi}
Assume that $\mathcal{A}\in \mathcal{W}$. Then $\mathcal{A}$ is quasi-multiplicative. Moreover, $\vec{\Phi}_{\mathcal{A}}$ is (simultaneously) quasi-multiplicative.
\end{thm}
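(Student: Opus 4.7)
The plan is to exploit the pinching/twisting pair $(p,z)$ provided by typicality to construct, for any pair of admissible words $I,J$, a short connecting word $K$ of length $\leq m$ such that the matrix $\mathcal{A}(IKJ)$ captures most of the product of the norms $\|\mathcal{A}(I)\|\cdot\|\mathcal{A}(J)\|$. The key idea is that the connecting word $K$ will ``rotate'' the top singular direction of $\mathcal{A}(I)$ into a direction that is well-expanded by $\mathcal{A}(J)$, and this rotation is performed by a combination of (i) iterations of the pinching map $P:=\mathcal{A}^{per(p)}(p)$, (ii) one insertion of the holonomy loop $\psi_p^z$, and (iii) short transition words provided by mixing of the base.

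First I would set up the geometric input. By fiber bunching and H\"older continuity, the canonical holonomies $H^{s/u}$ are defined and H\"older continuous, so for any long word $I$ ending near a fixed base point $x_0$ the top singular direction $u_I^+$ of $\mathcal{A}(I)$ and the ``most contracted co-direction'' $u_I^-$ are well controlled up to holonomy. Pinching of $P$ yields a basis $\{v_1,\dots,v_k\}$ of eigenvectors with distinct eigenvalue moduli, so after applying a high power $P^N$ any vector outside the hyperplane $W_1$ is mapped to a direction within distance $O(|\lambda_2/\lambda_1|^N)$ of $v_1$, and $\|P^N v\|\geq c\,\|P^N\|\,\|v\|$ for a uniform $c>0$ depending on the distance to $W_1$. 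Similarly, acting on co-vectors, $(P^N)^\ast$ concentrates on the top dual eigenvector. Twisting says that $\psi_p^z(v_i)\notin W_j$ for all $i,j$, so $\psi_p^z$ maps top eigenvectors to generic position with quantitative control.

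Next I would assemble the connecting word $K$ in three pieces, each of uniformly bounded length: (1) a transition word $W_1$ of length at most $N_0$ (existence by topological mixing) from the end of $I$ to $p$; (2) a block of the form $p^{N}(\text{word leading to }z)p^{N}$ built from iterates of the periodic orbit before and after the homoclinic excursion through $z$ — this realizes, up to bounded holonomy corrections, the operator $P^N\circ\psi_p^z\circ P^N$ on the fiber; (3) another transition word $W_2$ of length at most $N_0$ from $p$ to the start of $J$. The number $N$ will be fixed once and for all, chosen large enough (using pinching) so that the first $P^N$ dominates the singular direction of $\mathcal{A}(I)$ onto a $v_1$-close vector, and the second $P^N$ dominates along $\psi_p^z(v_1)$, which by twisting avoids the bad hyperplane for $\mathcal{A}(J)$. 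Then a routine norm estimate, using the H\"older estimate on holonomies to compare $\mathcal{A}(\cdot)$ with the canonical holonomy transport, yields
\[
\|\mathcal{A}(IKJ)\|\;\geq\;C\,\|\mathcal{A}(I)\|\,\|\mathcal{A}(J)\|
\]
with $C>0$ independent of $I$ and $J$, proving quasi-multiplicativity of $\mathcal{A}$.

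For the simultaneous statement, I would run the same construction on each exterior power $\wedge^t\R^k$ with $1\leq t\leq k-1$. The hypothesis that $\mathcal{A}$ is $t$-typical for every $t$ gives exactly the right pinching (all products of $t$ distinct eigenvalues of $P$ are distinct) and twisting (the induced $(\psi_p^z)^{\wedge t}$ sends basis $t$-vectors off every coordinate hyperplane of $P^{\wedge t}$). The critical observation is that the \emph{same} points $p$ and $z$ work for every $t$, so the same connecting word $K$ chosen above — for large enough common $N$ — simultaneously satisfies the inequality on each exterior power. The main obstacle is precisely this uniformity: one must verify that the finitely many $t$-level twisting conditions can be met simultaneously by a single bounded-length $K$, and that the resulting constant $C$ and length bound $m$ do not depend on $I,J$. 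Pinching gives exponential contraction rates that, once $N$ is fixed large enough for all $t$ simultaneously, yield the required lower bound uniformly, and continuity/H\"older regularity of the canonical holonomies absorbs the remaining error terms into the constant $C$.
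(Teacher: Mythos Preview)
The paper does not prove this statement; it is quoted verbatim from Park \cite{P}, so there is no in-paper proof to compare against. Your sketch does follow the same architecture as Park's actual argument (transport by holonomies to the fiber over $p$, use powers of $P=\mathcal{A}^{per(p)}(p)$ to concentrate directions near eigenlines, insert the homoclinic loop $\psi_p^z$ to put things in general position, and bridge with short mixing words). However, the sketch has a genuine gap at the step where uniformity is claimed.

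The assertion ``the first $P^N$ dominates the singular direction of $\mathcal{A}(I)$ onto a $v_1$-close vector'' is false for a \emph{fixed} $N$: if that direction lies in (or very near) $W_1=\operatorname{span}(v_2,\dots,v_k)$, then $P^N$ keeps it in (or near) $W_1$ forever, and it never approaches $v_1$. Your next sentence, that the output of the second $P^N$ ``by twisting avoids the bad hyperplane for $\mathcal{A}(J)$'', conflates two unrelated hyperplanes: twisting controls the position of $\psi_p^z(v_i)$ relative to the \emph{eigen}-hyperplanes $W_j$ of $P$, but says nothing about the hyperplane determined by the top singular direction of $\mathcal{A}(J)$, which is what actually governs the norm estimate $\|\mathcal{A}(J)\cdot(\,\cdot\,)\|$. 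A single fixed pair of exponents therefore does not yield quasi-multiplicativity.

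What repairs the argument (and what Park does) is to let the exponents vary within a \emph{bounded} range depending on $I$ and $J$. A compactness argument on projective space gives $N_0$ such that for every unit vector $u$ there exists $N\le N_0$ with $P^N u$ within a fixed $\varepsilon$ of \emph{some} eigendirection $v_i$ (not necessarily $v_1$); the dual statement for $(P^*)^{N'}$ applied to the top singular functional of $\mathcal{A}(J)$ produces some $v_j^*$. Twisting then gives $\langle v_j^*,\psi_p^z v_i\rangle\neq 0$ for all $i,j$, and this is the uniform lower bound. For the simultaneous statement there is a further issue your sketch elides: the connecting word must work for the top singular $t$-plane of $\mathcal{A}(I)$ in $\wedge^t\R^k$ for \emph{every} $t$ at once, and the eigen-index selected by the compactness step may differ across $t$; arranging a single $K$ of bounded length that handles all exterior powers simultaneously requires an additional (finite) bookkeeping argument, not merely taking $N$ large.
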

He \cite{P} also uses the quasi-multiplicative property $\mathcal{A}\in \mathcal{W}$ to show the continuity of the topological pressure which it states in the following theorem. 
\begin{thm}[{{\cite[Theorem B]{P}}}]\label{continuity} The map $(s, \mathcal{A})\rightarrow P_{\tilde{\Phi}_{\mathcal{A}}}(s)$ is continuous on $[0, \infty) \times \mathcal{W}$.
\end{thm}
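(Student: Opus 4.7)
The plan is to establish continuity by a squeezing argument that controls the pressure between upper and lower bounds given by the partition functions $Z_{n}(s,\mathcal{A}):=\sum_{I\in\mathcal{L}(n)}\|\mathcal{A}(I)\|^{s}$. For any $\mathcal{A}\in H_{b}^{r}(\Sigma,GL(k,\R))$, subadditivity of $\Phi_{\mathcal{A}}$ gives $Z_{n+n'}(s,\mathcal{A})\le C\,Z_{n}(s,\mathcal{A})\,Z_{n'}(s,\mathcal{A})$, where $C$ absorbs the cylinder-versus-point discrepancy in \eqref{defini1}. Fekete's subadditive lemma then yields $P_{\Phi_{\mathcal{A}}}(s)=\inf_{n}\frac{1}{n}\log Z_{n}(s,\mathcal{A})+o(1)$. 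Since each $Z_{n}(s,\mathcal{A})$ is a finite sum of continuous functions of $(s,\mathcal{A})$, the pressure is an infimum of continuous functions and hence upper semi-continuous on $[0,\infty)\times H_{b}^{r}$.

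The matching lower bound is where Theorem \ref{quasi} enters. Given $\mathcal{A}_{0}\in\mathcal{W}$ with quasi-multiplicativity constants $C_{0}>0$ and $m_{0}\in\N$, for every $I,J\in\mathcal{L}$ I obtain a gluing word $K=K_{I,J}$ with $|K|\le m_{0}$ and $\|\mathcal{A}_{0}(IKJ)\|\ge C_{0}\|\mathcal{A}_{0}(I)\|\|\mathcal{A}_{0}(J)\|$. Pigeonholing on the length of $K$ produces the almost super-multiplicative estimate
\[
\max_{0\le k\le m_{0}}Z_{n+k+n'}(s,\mathcal{A}_{0})\ge \frac{C_{0}^{s}}{m_{0}+1}\,Z_{n}(s,\mathcal{A}_{0})\,Z_{n'}(s,\mathcal{A}_{0}).
\]
A Fekete-type argument for approximately super-additive sequences then gives $P_{\Phi_{\mathcal{A}_{0}}}(s)\ge \frac{1}{n+m_{0}}\bigl(\log Z_{n}(s,\mathcal{A}_{0})-M_{s}\bigr)$ for every $n$, with $M_{s}$ depending on $s$ and the constants $C_{0},m_{0}$.

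To propagate this bound to nearby cocycles I would show that the quasi-multiplicativity constants may be chosen uniform on some neighborhood $U\subset\mathcal{W}$ of $\mathcal{A}_{0}$. Since the pinching and twisting conditions (Definitions \ref{typical1} and \ref{typical2}) are witnessed by a fixed periodic point $p$ and homoclinic point $z$ and are given by finitely many strict inequalities on eigenvalues and on the image of the holonomy loop $\psi_{p}^{z}$, small perturbations of $\mathcal{A}_{0}$ admit the same witnesses with slightly perturbed numerical data. Inspection of the construction underlying Theorem \ref{quasi} then yields constants $C(\mathcal{A}),m(\mathcal{A})$ that are locally bounded on $U$. Combining the universal upper bound from the first paragraph with this uniform lower bound squeezes $P_{\Phi_{\mathcal{A}}}(s)$ between $\frac{1}{n}\log Z_{n}(s,\mathcal{A})+O(1/n)$ and $\frac{1}{n+m_{0}}\log Z_{n}(s,\mathcal{A})-O(1/n)$ uniformly on $[0,S]\times U$ for any compact $S\subset[0,\infty)$. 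A standard uniform-convergence argument then delivers joint continuity at $(s_{0},\mathcal{A}_{0})$.

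The hard part will be the locally uniform control of the quasi-multiplicativity constants. Openness of pinching and twisting is immediate from their definition, but the gluing construction must work for \emph{all} admissible pairs $(I,J)$ simultaneously, and the matrix norms $\|\mathcal{A}(I)\|$ depend exponentially on $|I|$, so naive continuity in $\mathcal{A}$ is not uniform in $I$. The right way is to exploit that the ratio $\|\mathcal{A}(IKJ)\|/(\|\mathcal{A}(I)\|\|\mathcal{A}(J)\|)$ is governed by the action of a bounded-length block $K$ between suitable singular directions, so the perturbation estimate reduces to a compactness argument on a flag variety that is insensitive to $|I|,|J|$. An alternative route via Feng–K\"aenm\"aki-style uniqueness of equilibrium states for quasi-multiplicative subadditive potentials (combined with stability of the associated Gibbs measures under perturbations of $\mathcal{A}$) would replace the uniformity problem by a spectral-gap stability problem, but appears to require the same underlying compactness input.
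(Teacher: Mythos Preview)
The paper does not prove this statement: it is quoted verbatim as Theorem~B of Park~\cite{P} and used as a black box. There is therefore no ``paper's own proof'' to compare against.

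That said, your outline is essentially the argument Park gives (itself modeled on Feng--Shmerkin~\cite{FS} for locally constant cocycles): upper semi-continuity of $(s,\mathcal{A})\mapsto P_{\Phi_{\mathcal{A}}}(s)$ from subadditivity and Fekete, lower semi-continuity from a super-multiplicative Fekete bound driven by quasi-multiplicativity, and then the crucial upgrade to \emph{locally uniform} quasi-multiplicativity constants on a neighborhood in $\mathcal{W}$. Your identification of the hard step is accurate, and your proposed mechanism---stability of the pinching/twisting witnesses $(p,z)$ under perturbation, plus the observation that the gluing ratio $\|\mathcal{A}(IKJ)\|/(\|\mathcal{A}(I)\|\,\|\mathcal{A}(J)\|)$ is controlled by bounded-length data and a compactness argument on flag varieties independent of $|I|,|J|$---is exactly how Park obtains the uniformity (see \cite[\S5]{P}). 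One small point: the statement concerns $\tilde{\Phi}_{\mathcal{A}}$, which in \cite{P} denotes the Falconer singular value potential rather than the bare norm potential $\Phi_{\mathcal{A}}$ you work with; the argument is the same once one invokes the \emph{simultaneous} quasi-multiplicativity of all exterior powers (Theorem~\ref{quasi}), but you should make that passage explicit.
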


Theorem \ref{uniq11} shows that we have the Feng and K\"aenm\"aki's result \cite[Proposition 1.2]{FK} for typical cocycles.

\begin{thm}\label{uniq11}
Let $\mathcal{A}\in \mathcal{W}$ be typical. Assume that $\vec{\Phi}_{\mathcal{A}}$ is (simultaneously) quasi-multiplicative and $\vec{t}\in \R_{+}^{k}$. Then $P_{\vec{\Phi}_{\mathcal{A}}}(\vec{t})$ has a unique equilibrium state $\mu_{\vec{t}}$ for the subadditive potential $\vec{t}.\vec{\Phi}_{\mathcal{A}}
$. Furthermore, $\mu_{\vec{t}}$ has the following Gibbs property: There exists $C \geq 1$ such that for any $n\in \N$, $[J]\in \mathcal{L}(n)$, we have
\begin{equation}\label{gibbs11}
C^{-1} \leq \frac{\mu_{\vec{t}} ([J])}{e^{-nP_{\vec{\Phi}_{\mathcal{A}}}(\vec{t})+\vec{t}.\ \vec{\Phi}_{\mathcal{A}}(x)}}\leq C,
\end{equation}
for any $x\in [J]$. Furthermore, $P_{\vec{\Phi}_{\mathcal{A}}} ( . )$ is differentiable on $\R_{+}^{k}$ and $\nabla P_{\vec{\Phi}_{\mathcal{A}}} ( \vec{t} )=\chi(\mu_{\vec{t}}, \vec{\Phi}_{\mathcal{A}}).$
\end{thm}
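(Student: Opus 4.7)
The plan is to extend the strategy of Feng--K\"aenm\"aki \cite{FK} (originally for irreducible locally constant cocycles) to typical cocycles, using the simultaneous quasi-multiplicativity from Theorem \ref{quasi} as the crucial combinatorial input. The upper semi-continuity of the entropy map on a subshift of finite type (which is expansive) will let me invoke Proposition \ref{eq11} at the end to pass from uniqueness of the equilibrium state to differentiability of the pressure.

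First I would build a candidate equilibrium state by a Carath\'eodory-type construction. Fix $\vec{t} \in \R_+^k$ and set $P := P_{\vec{\Phi}_{\mathcal{A}}}(\vec{t})$. For each $n \in \N$ pick one representative $x_J \in [J]$ for every $J \in \mathcal{L}(n)$ and define
\[
\nu_n := \frac{1}{Z_n}\sum_{J \in \mathcal{L}(n)} e^{\vec{t}\cdot \vec{\Phi}_{\mathcal{A}}(x_J)}\,\delta_{x_J},
\qquad Z_n = \sum_{J \in \mathcal{L}(n)} e^{\vec{t}\cdot \vec{\Phi}_{\mathcal{A}}(x_J)}.
\]
Standard variational arguments (analogous to Bowen's) give $\tfrac{1}{n}\log Z_n \to P$. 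Let $\mu_{\vec{t}}$ be any weak-$\ast$ limit of the Ces\`aro averages $\tfrac{1}{n}\sum_{k=0}^{n-1} T_*^k\nu_n$; this is a $T$-invariant Borel probability measure. The key step is to show that $\mu_{\vec{t}}$ satisfies the two-sided Gibbs inequality \eqref{gibbs11}. The upper bound follows from submultiplicativity of $\|\mathcal{A}^{\wedge i}(\cdot)\|$ (and hence of $e^{\vec{t}\cdot \vec{\Phi}_{\mathcal{A}}}$), combined with the standard estimate of $Z_{n+m}$ from below by $Z_n$ via concatenation of words. The lower bound is exactly where the simultaneous quasi-multiplicativity of Theorem \ref{quasi} is used: given any cylinder $[J]$ of length $n$ and any word $J'$ of length $m$, there is a bridging word $K$ of bounded length $|K|\le m_0$ such that $\|\mathcal{A}^{\wedge i}(JKJ')\|\geq C\|\mathcal{A}^{\wedge i}(J)\|\|\mathcal{A}^{\wedge i}(J')\|$ for every $1\le i\le k$ simultaneously. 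Summing over $J'\in\mathcal{L}(m)$ one obtains $\nu_{n+m+|K|}([J])\gtrsim e^{-(n+m)P}\cdot e^{\vec{t}\cdot\vec{\Phi}_{\mathcal{A}}(x_J)}$ up to uniform constants, and passing to the invariant limit gives the Gibbs lower bound.

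Next I would deduce that $\mu_{\vec{t}}$ is an equilibrium state, is ergodic, and is unique. The Gibbs property implies, by a direct counting argument, that $h_{\mu_{\vec{t}}}(T) + \vec{t}\cdot \chi(\mu_{\vec{t}}, \vec{\Phi}_{\mathcal{A}}) = P$, so $\mu_{\vec{t}} \in \Eq(\vec{\Phi}_{\mathcal{A}},\vec{t})$. Ergodicity follows in the usual way from the Gibbs inequality: for any two cylinders $[I]$ and $[J]$, one uses quasi-multiplicativity again together with \eqref{gibbs11} to get $\mu_{\vec{t}}([I]\cap T^{-(n+|K|)}[J]) \gtrsim \mu_{\vec{t}}([I])\,\mu_{\vec{t}}([J])$ for a bounded bridging time, which upgrades (via standard manipulations) to mixing and hence ergodicity. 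For uniqueness, I would show that any ergodic equilibrium state $\nu$ must also satisfy a Gibbs inequality \eqref{gibbs11}: indeed, such a $\nu$ saturates the variational principle of Theorem \ref{var1}, and the usual Shannon--McMillan--Breiman/Katok-entropy argument combined with the almost-additive behavior of $\vec{t}\cdot \vec{\Phi}_{\mathcal{A}}$ on cylinders (inherited from quasi-multiplicativity) forces $\nu$ to have the same Gibbs bounds. Two ergodic measures both satisfying \eqref{gibbs11} with respect to the same reference weights are mutually absolutely continuous with bounded Radon--Nikodym derivative, hence equal. Finally, since $\mathcal{M}(\Sigma,T)$ is a Choquet simplex with ergodic measures as its extreme points, uniqueness among ergodic equilibrium states extends to full uniqueness in $\Eq(\vec{\Phi}_{\mathcal{A}},\vec{t})$.

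For the last assertion, differentiability of $P_{\vec{\Phi}_{\mathcal{A}}}$ on $\R_+^k$ with $\nabla P_{\vec{\Phi}_{\mathcal{A}}}(\vec{t}) = \chi(\mu_{\vec{t}},\vec{\Phi}_{\mathcal{A}})$ is an immediate consequence of Proposition \ref{eq11}: since the entropy map on an expansive system is upper semi-continuous, \eqref{subadditive} gives $\partial P_{\vec{\Phi}_{\mathcal{A}}}(\vec{t}) = \{\chi(\mu,\vec{\Phi}_{\mathcal{A}}):\mu\in\Eq(\vec{\Phi}_{\mathcal{A}},\vec{t})\}$, and uniqueness collapses this subdifferential to a single point. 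The main obstacle I anticipate is the lower Gibbs bound, where one has to handle all $k$ exterior powers simultaneously with a single bridging word $K$; this is precisely what the simultaneous clause in Theorem \ref{quasi} provides, so the difficulty is really absorbed into Park's result. A secondary subtlety is verifying that the almost-additive controls on $\vec{t}\cdot\vec{\Phi}_{\mathcal{A}}$ survive for all $\vec{t}\in\R_+^k$ (not just for locally constant or scalar potentials), which requires keeping track of how the quasi-multiplicative constants combine when taking weighted sums of $\log\|\mathcal{A}^{\wedge i}\|$.
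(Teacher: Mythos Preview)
Your outline is the right strategy and essentially reconstructs what the paper defers to: the paper's own proof is a one-line citation to \cite[Lemma 3.10 and Proposition 3.9]{P}, where Park carries out exactly this Feng--K\"aenm\"aki-type construction (Gibbs measure from simultaneous quasi-multiplicativity, then uniqueness, then differentiability) for typical cocycles. So you are not taking a genuinely different route so much as spelling out what the paper outsources to \cite{P}; in particular the use of Proposition~\ref{eq11} at the end to pass from uniqueness to $\nabla P_{\vec{\Phi}_{\mathcal{A}}}(\vec{t})=\chi(\mu_{\vec{t}},\vec{\Phi}_{\mathcal{A}})$ matches what is implicit there.

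One step in your sketch does need adjustment. In the uniqueness argument you claim that quasi-multiplicativity yields ``almost-additive behavior of $\vec{t}\cdot\vec{\Phi}_{\mathcal{A}}$ on cylinders'' and that, via Shannon--McMillan--Breiman, this forces an arbitrary ergodic equilibrium state $\nu$ to satisfy the \emph{uniform} Gibbs bounds \eqref{gibbs11}. Neither part holds as stated: quasi-multiplicativity only gives a lower bound \emph{after inserting a bridging word}, not a two-sided constant for every concatenation, so $\vec{t}\cdot\vec{\Phi}_{\mathcal{A}}$ is not almost additive in general; and SMB delivers only asymptotic, not uniform, control on $\nu([x|_n])$. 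The argument that actually works (as in \cite{FK}, \cite{F11}, and \cite{P}) is: for $\nu$-a.e.\ $x$, SMB together with the subadditive ergodic theorem and the equality $h_\nu(T)+\vec{t}\cdot\chi(\nu,\vec{\Phi}_{\mathcal{A}})=P$ give
\[
\frac{1}{n}\log\frac{\nu([x|_n])}{\mu_{\vec{t}}([x|_n])}\longrightarrow 0,
\]
and a local-dimension / Borel--Cantelli argument then yields $\nu\ll\mu_{\vec{t}}$; ergodicity of $\mu_{\vec{t}}$ forces $\nu=\mu_{\vec{t}}$. With this correction in place, the remainder of your plan (lower Gibbs bound from the \emph{simultaneous} bridging word in Theorem~\ref{quasi}, equilibrium property from counting, ergodicity from the Gibbs inequality, differentiability from Proposition~\ref{eq11}) goes through as you describe.
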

\begin{proof}
It is easily follows from \cite[Lemma 3.10]{P} and \cite[Proposition 3.9]{P}.
\end{proof}

\begin{thm}\label{convex11}
Assume that $h_{top}(T)<\infty $, and $\alpha(\mathcal{A})<\infty$. If $\mathcal{A} \in \mathcal{W}$, then $P_{\Phi_{\mathcal{A}}}( )$ is a real continuous convex function on $\R$. Moreover, $\alpha(\mathcal{A})$ exists \footnote{It means the limit exists.} and it is equal to $P_{\Phi_{\mathcal{A}}}^{'}(-\infty):=\lim_{t\rightarrow -\infty} \frac{P_{\Phi_{\mathcal{A}}}(t)}{t}.$ Similarly, $P_{\vec{\Phi}_{\mathcal{A}}}$ is  a real continuous convex function on $\R^{k}.$ Furthermore,
\[
\vec{\alpha}(\mathcal{A}):=\min\{\alpha_{i}, \vec{\alpha}\in \vec{L} \}=\lim_{\vec{t}\rightarrow -\infty}\frac{P_{\vec{\Phi}_{\mathcal{A}}}(\vec{t})}{\vec{t}}.
\]
\end{thm}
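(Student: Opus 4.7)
The plan is to combine the subadditive variational principle with standard convex-analysis facts, then invoke the quasi-multiplicativity of typical cocycles (Theorem~\ref{quasi}) to match the asymptotic slope at $-\infty$ with the minimal pointwise growth rate.

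First I would establish the variational formula
\[ P_{\Phi_{\mathcal{A}}}(t) \;=\; \sup_{\mu \in \mathcal{M}(\Sigma,T)}\bigl\{\,h_\mu(T) + t\,\chi(\mu,\mathcal{A})\,\bigr\} \qquad (t \in \R). \]
For $t \geq 0$ this is exactly Theorem~\ref{var1}; for $t<0$ the potential $t\Phi_{\mathcal{A}}$ is superadditive and the analogous variational principle holds, using upper semi-continuity of the entropy map on the mixing SFT $(\Sigma,T)$. Convexity of $P_{\Phi_{\mathcal{A}}}$ is then immediate: it is a supremum of affine functions of $t$. Finiteness on all of $\R$ follows from the uniform bounds $0 \leq h_\mu(T) \leq h_{top}(T) < \infty$ and $\alpha(\mathcal{A}) \leq \chi(\mu,\mathcal{A}) \leq \beta(\mathcal{A}) < \infty$, the latter because $\Sigma$ is compact and $\mathcal{A}$ takes values in $GL(k,\R)$. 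A real-valued convex function on $\R$ is automatically continuous, yielding the first assertion.

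Next, for the asymptotic slope, I would divide the variational principle by $t<0$ (flipping $\sup$ into $\inf$) to get
\[ \frac{P_{\Phi_{\mathcal{A}}}(t)}{t} \;=\; \inf_{\mu \in \mathcal{M}(\Sigma,T)} \Bigl( \frac{h_\mu(T)}{t} + \chi(\mu,\mathcal{A}) \Bigr). \]
The monotonicity of secant slopes for convex functions ensures the limit as $t \to -\infty$ exists; since $h_\mu(T)/t \to 0$ uniformly in $\mu$ (as $h_\mu \leq h_{top}(T)$), this limit equals $\inf_\mu \chi(\mu,\mathcal{A})$. The inequality $\inf_\mu \chi(\mu,\mathcal{A}) \geq \alpha(\mathcal{A})$ is trivial from $\int \log \|\mathcal{A}^n\| \,d\mu \geq \log \inf_x \|\mathcal{A}^n(x)\|$. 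For the reverse, and simultaneously to upgrade the $\liminf$ defining $\alpha(\mathcal{A})$ to a genuine limit, one uses typicality via Theorem~\ref{quasi}: given $n$ and $x_n$ nearly achieving $\inf_x \|\mathcal{A}^n(x)\|$, quasi-multiplicativity produces a bounded-length word $K$ so that the concatenation of $x_n|_{[0,n-1]}$ with itself through $K$ is admissible and supports a periodic orbit whose Lyapunov exponent differs from $\frac{1}{n}\log\inf_x \|\mathcal{A}^n(x)\|$ by only $O(1/n)$. The periodic measures produced this way give $\inf_\mu \chi(\mu,\mathcal{A}) \leq \alpha(\mathcal{A})$ and force the limit in the definition of $\alpha(\mathcal{A})$ to exist.

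The vector version runs componentwise: convexity of $P_{\vec{\Phi}_{\mathcal{A}}}$ on $\R^k$ again follows from Theorem~\ref{var1} applied to $\vec{t}\cdot\vec{\Phi}_{\mathcal{A}}$ (a sup of affine functions of $\vec{t}$), and the identification $\vec{\alpha}(\mathcal{A}) = \lim_{\vec{t}\to-\infty} P_{\vec{\Phi}_{\mathcal{A}}}(\vec{t})/\vec{t}$ reduces to the scalar statement applied to each exterior power $\mathcal{A}^{\wedge i}$, using the \emph{simultaneous} quasi-multiplicativity part of Theorem~\ref{quasi}. I expect the main obstacle to be precisely the typicality step: quasi-multiplicativity directly controls only the maxima $\|\mathcal{A}(I)\|$ over cylinders, whereas $\alpha(\mathcal{A})$ is a $\liminf$ of pointwise minima, so bridging these requires careful use of the pinching/twisting hypothesis to glue near-minimizing finite orbits into periodic ones without losing control of the cocycle norms along the connecting words.
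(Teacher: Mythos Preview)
Your approach is genuinely different from the paper's. The paper's proof is a two-line deferral to Feng~\cite[Lemmas~2.2 and~2.3]{F}: those lemmas establish convexity and the asymptotic-slope identity directly from the partition sums $\sum_{I\in\mathcal L(n)}\|\mathcal A(I)\|^t$, using H\"older's inequality for convexity and a squeeze argument on the sums for the $t\to-\infty$ slope. The only thing the paper adds is the observation that Feng's argument needs nothing beyond quasi-multiplicativity, which Theorem~\ref{quasi} supplies for typical cocycles. Your route through the variational principle is more measure-theoretic and conceptually clean for the convexity part, but it introduces two points the direct approach does not face.

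First, the variational principle you invoke for $t<0$ is not Theorem~\ref{var1} as stated (that theorem assumes $\vec t\in\R_+^k$ and a subadditive potential). The superadditive version does hold under upper semi-continuity of entropy, but it is a separate statement that you would need to cite or prove; Feng's argument sidesteps this by never leaving the level of partition sums.

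Second, and more seriously, the obstacle you flag at the end is real and your sketch does not resolve it. Quasi-multiplicativity gives \emph{lower} bounds $\|\mathcal A(IKJ)\|\ge C\|\mathcal A(I)\|\|\mathcal A(J)\|$, which is exactly the wrong direction for building a periodic orbit with \emph{small} Lyapunov exponent from a near-minimizer $x_n$. To control the exponent of your periodic point $p$ from above you would need $\|\mathcal A^n(p)\|$ comparable to $\|\mathcal A^n(x_n)\|$, i.e.\ bounded distortion of $\|\mathcal A^n\|$ within length-$n$ cylinders, and that is not a consequence of quasi-multiplicativity (nor of fiber-bunching alone). Feng's argument avoids this entirely: it works throughout with the cylinder quantity $\min_{I\in\mathcal L(n)}\|\mathcal A(I)\|$, for which the squeeze
\[
t\log\min_I\|\mathcal A(I)\|\ \le\ \log\!\sum_I\|\mathcal A(I)\|^t\ \le\ \log|\mathcal L(n)|+t\log\min_I\|\mathcal A(I)\|\qquad(t<0)
\]
gives the slope at $-\infty$ directly, and in the locally constant setting of \cite{F} that cylinder minimum coincides with $\inf_x\|\mathcal A^n(x)\|$. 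The ``slight modification'' the paper alludes to is precisely what is needed to pass from the cylinder minimum to the pointwise infimum in the H\"older setting; your periodic-orbit construction would require the same additional input, so you have not actually bypassed the difficulty.
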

\begin{proof}
See \cite[Lemmas 2.2 and 2.3]{F}.  We remark that although \cite[Lemmas 2.2 and 2.3]{F} only deal with locally constant cocycles, the proof given there works for our theorem under slightly modification. Indeed, Feng uses the quasi-multiplicative properties to prove the lemmas. Since $\mathcal{A}\in \mathcal{W}$, $\vec{\Phi}_{\mathcal{A}}$ is (simultaneously) quasi-multiplicative by Theorem \ref{quasi}.
\end{proof}

Note that every almost additive potential is quasi-multiplicative, but the reverse is not true; see Example \eqref{example}. Therefore, one also can use the above theorem for almost additive potentials. 

\section{The proofs of Theorems A, B and C}\label{4}
In this section, we will first prove a more general version of Theorem B, and then Theorem B helps us deduce Theorem A. Finally, we will prove Theorem C.

We discuss multifractal formalism of typical cocycles. Our motivation is to study the multifractal formalism associated to certain iterated function systems with overlaps. For instance, the Hausdorff dimension of level sets has been calculated for 2-dimension-planar affine iterated function systems satisfying strong  irreducibility and the strong open set condition by B. B\'ar\'any, T. Jordan, A. K\"aenm\"aki, and M. Rams \cite{BJKR}. In additive potential setting, the Lyapunov exponents are equal the Birkhoff averages. In this case, the restricted varitional principle, topological entropy, and Hausdorff dimension level set has been studied by a lot of authors (see \cite{C}). We remark that Feng and Huang \cite{FH} proved (\ref{myeq1}) for almost additive potentials under certain assumptions. In fact, the
proof of Theorem \ref{addth1} is based on some ideas from that work. 
\begin{thm}\label{concave1}
Let $\mathcal{A} \in \mathcal{W}$. Suppose that $P_{\vec{\Phi}_{\mathcal{A}}}(\vec{q})\in \R$ for  each $\vec{q}\in \R^{k}$.   Then for $\vec{\alpha} \in \vec{L}$,
\begin{equation}\label{FHF}
h_{top}(T,E(\vec{\alpha})) = \inf \{P_{\vec{\Phi}_{\mathcal{A}}}(\vec{q})-\vec{\alpha}. \vec{q}: \hspace{0,2cm} \vec{q}\in \R^{k}\}.
\end{equation}
\end{thm}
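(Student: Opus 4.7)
The plan is to prove both inequalities in the formula by a standard multifractal dichotomy: a covering argument for the $\leq$ direction and a Legendre-type duality argument for the $\geq$ direction. Both directions rely essentially on $\mathcal{A} \in \mathcal{W}$ through quasi-multiplicativity of $\vec{\Phi}_{\mathcal{A}}$ (Theorem \ref{quasi}) and the bounded distortion provided by fiber-bunching.

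For the upper bound, I would fix $\vec q \in \R^k$ and $\delta > 0$ and decompose $E(\vec\alpha) = \bigcup_N E_N$, where
\[
E_N := \Bigl\{x \in \Sigma : \bigl|\tfrac{1}{n}\log \varphi_{\mathcal{A}^{\wedge l}, n}(x) - \alpha_l\bigr| < \delta \text{ for all } n \geq N, \, 1 \leq l \leq k\Bigr\}.
\]
Covering $E_N$ by Bowen balls $B_{n_i}(x_i, \epsilon)$ with $x_i \in E_N$ and $n_i \geq N$, and using fiber-bunching to pass the distortion of each $\varphi_{\mathcal{A}^{\wedge l}, n_i}$ across the Bowen ball to a uniform constant $C$, I would estimate
\[
\sum_i e^{-s n_i} \leq C \sum_i e^{-n_i (s + \vec q \cdot \vec\alpha - |\vec q|\delta)}\, e^{S_{n_i}^{\vec q \cdot \vec{\Phi}}(x_i)},
\]
then thin the cover to an $(n_i, \epsilon)$-separated family so that the inner sum is controlled by $e^{n_i P_{\vec{\Phi}_{\mathcal{A}}}(\vec q)(1+o(1))}$ by definition of the pressure. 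Choosing $s$ slightly above $P(\vec q) - \vec q \cdot \vec\alpha$, letting $\delta \downarrow 0$, and then taking the infimum over $\vec q$ yields $h_{top}(T, E(\vec\alpha)) \leq \inf_{\vec q}(P(\vec q) - \vec\alpha \cdot \vec q)$.

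For the lower bound, I would introduce the concave envelope
\[
g(\vec\alpha) := \sup\{h_\mu(T) : \mu \in \mathcal{M}(\Sigma,T),\ \chi(\mu, \vec{\Phi}_{\mathcal{A}}) = \vec\alpha\},
\]
which is concave on the convex set $\vec L$ (Theorem \ref{closed}) by affinity of $\mu \mapsto h_\mu(T)$ and linearity of $\mu \mapsto \chi(\mu,\vec{\Phi}_{\mathcal{A}})$. The subadditive variational principle (Theorem \ref{var1}) gives $P_{\vec{\Phi}_{\mathcal{A}}}(\vec q) = \sup_{\vec\alpha \in \vec L}(g(\vec\alpha) + \vec q \cdot \vec\alpha)$ for $\vec q \in \R_+^k$, which via the duality in Theorem \ref{Cor} implies $g(\vec\alpha) = \inf_{\vec q}(P(\vec q) - \vec\alpha \cdot \vec q)$ on the relative interior of $\vec L$. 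Combining this with the obvious inequality $h_{top}(T, E(\vec\alpha)) \geq g(\vec\alpha)$, which follows from Kingman's subadditive ergodic theorem (any ergodic $\mu$ with $\chi(\mu, \vec{\Phi}_{\mathcal{A}}) = \vec\alpha$ is carried by $E(\vec\alpha)$, so $h_\mu(T) \leq h_{top}(T, E(\vec\alpha))$), one obtains the matching lower bound in $\mathrm{ri}(\vec L)$.

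The two main obstacles I anticipate are, first, extending the variational characterization of $P_{\vec{\Phi}_{\mathcal{A}}}$ from $\vec q \in \R_+^k$ (where Theorem \ref{var1} applies) to all $\vec q \in \R^k$ so that Theorem \ref{Cor} can be invoked over the full dual space; this I would handle using the unique Gibbs equilibrium states of Theorem \ref{uniq11} together with the finite-pressure hypothesis in the statement, which rules out degenerations as $|\vec q| \to \infty$. Second, treating $\vec\alpha$ on the relative boundary of $\vec L$: here I would approximate by interior points $\vec\alpha_n \to \vec\alpha$ and pass to the limit, using the upper semicontinuity of $\vec\alpha \mapsto h_{top}(T, E(\vec\alpha))$ already obtained from the upper bound, together with the concavity (and hence continuity on $\vec L$) of $g$.
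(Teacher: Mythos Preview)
The paper's proof is literally two lines: note that $\vec{\Phi}_{\mathcal{A}}$ is simultaneously quasi-multiplicative by Theorem~\ref{quasi}, then invoke \cite[Theorem~4.10]{FH} verbatim. Your outline is an attempt to unpack what lies behind that citation, so the routes are not genuinely different; but the lower-bound half of your sketch contains a real gap.

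The inequality you call ``obvious'', $h_{top}(T,E(\vec\alpha))\geq g(\vec\alpha)$, is not justified by your parenthetical argument. Kingman's theorem together with Bowen's half-variational principle gives $h_{top}(T,E(\vec\alpha))\geq h_\mu(T)$ only for \emph{ergodic} $\mu$ with $\chi(\mu,\vec{\Phi}_{\mathcal{A}})=\vec\alpha$; a non-ergodic $\mu$ with the prescribed \emph{integrated} exponent vector may well satisfy $\mu(E(\vec\alpha))=0$, since its ergodic components can have exponent vectors different from $\vec\alpha$ that merely average to $\vec\alpha$. As $g$ is a supremum over \emph{all} invariant measures, the inequality does not follow from what you wrote. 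What actually closes the lower bound (and is the substance behind Theorems~\ref{uniq11} and~\ref{derivative} that Feng--Huang use) is that quasi-multiplicativity produces, for every $\vec q$, a unique \emph{ergodic} equilibrium state $\mu_{\vec q}$ with $\chi(\mu_{\vec q},\vec{\Phi}_{\mathcal{A}})=\nabla P_{\vec{\Phi}_{\mathcal{A}}}(\vec q)$; at each gradient point $\vec\alpha=\nabla P_{\vec{\Phi}_{\mathcal{A}}}(\vec q)$ this gives directly $h_{top}(T,E(\vec\alpha))\geq h_{\mu_{\vec q}}(T)=P_{\vec{\Phi}_{\mathcal{A}}}(\vec q)-\vec q\cdot\vec\alpha$, and convexity identifies the right-hand side with the full infimum over $\R^k$. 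Your boundary step has the same flavour of difficulty: concavity of $g$ yields continuity only in the relative interior, and the upper bound you proved bounds $h_{top}(T,E(\vec\alpha))$ above by an upper semicontinuous function but does not make $\vec\alpha\mapsto h_{top}(T,E(\vec\alpha))$ itself upper semicontinuous, so the limiting argument as written does not go through.
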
 
\begin{proof}
By Theorem \ref{quasi}, $\vec{\Phi_{\mathcal{A}}}$ is simultaneously quasi-multiplicative. Then, the proof follows from \cite[Theorem 4.10]{FH}.
\end{proof}
\begin{thm}\label{addth1}
Assume that $T:\Sigma \rightarrow \Sigma$ is a topologically mixing subshift of finite type on the compact metric space $\Sigma$. Suppose that $\mathcal{A}:X\rightarrow GL(k, \R)$ belongs to typical functions $\mathcal{W}$. 
Assume that $\vec{\Omega}$ is the range of the map from $\mathcal{M}(\Sigma,T)$ to $\R^{k}$ 
\[ \mu \mapsto (\chi_{1}(\mu, \mathcal{A}), \chi_{2}(\mu, \mathcal{A}),...,\chi_{k}(\mu, \mathcal{A})).\]

We define
\[ \textbf{h}(\vec{\alpha}):=\sup\{h_{\mu}(T) : \mu \in \mathcal{M}(\Sigma, T), \chi_{i}(\mu, \mathcal{A})=\alpha_{i}\},\]
where $\vec{\alpha}\in \Omega.$ Then,
\[\textbf{h}(\vec{\alpha})=\inf\{P_{\vec{\Phi}_{\mathcal{A}}}(\vec{q})-\vec{\alpha}. \vec{q} : \vec{q}\in \R^{k} \},\]
for $\vec{\alpha} \in \text{ri}(\vec{\Omega}).$
\end{thm}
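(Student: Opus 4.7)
The plan is to deduce the identity from the abstract Legendre-duality Theorem \ref{Cor}, applied to the concave function $g := \textbf{h}$ on the convex set $S := \vec{\Omega}$. The heart of the matter will then be identifying the associated dual function $W(\vec{q}) := \sup\{\textbf{h}(\vec{\alpha}) + \vec{q}\cdot\vec{\alpha} : \vec{\alpha}\in \vec{\Omega}\}$ with $P_{\vec{\Phi}_{\mathcal{A}}}(\vec{q})$ on all of $\R^k$, which in turn reduces to the subadditive variational principle.

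The first step is to verify the hypotheses of Theorem \ref{Cor}. The crucial observation is that the map $\mu \mapsto \chi(\mu, \vec{\Phi}_{\mathcal{A}})$ is affine on $\mathcal{M}(\Sigma, T)$: for each $n$ and each $i$, $\mu \mapsto \frac{1}{n}\int \log \varphi_{\mathcal{A}^{\wedge i}, n}\, d\mu$ is linear in $\mu$, and by $T$-invariance of $\mu$ together with Fekete's lemma the limit $\chi_i(\mu, \mathcal{A}) = \lim_n \frac{1}{n}\int \log \varphi_{\mathcal{A}^{\wedge i}, n}\, d\mu$ exists for every $\mu$, so the affine structure survives in the limit. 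Hence $\vec{\Omega}$, being the image of the convex simplex $\mathcal{M}(\Sigma, T)$ under an affine map, is convex. Concavity of $\textbf{h}$ follows by a standard convex-combination argument: given $\vec{\alpha}_1, \vec{\alpha}_2 \in \vec{\Omega}$, $t \in [0, 1]$, and $\epsilon > 0$, I choose $\mu_i$ with $\chi(\mu_i, \vec{\Phi}_{\mathcal{A}}) = \vec{\alpha}_i$ and $h_{\mu_i}(T) \geq \textbf{h}(\vec{\alpha}_i) - \epsilon$; the measure $\mu := t\mu_1 + (1-t)\mu_2$ then satisfies $\chi(\mu, \vec{\Phi}_{\mathcal{A}}) = t\vec{\alpha}_1 + (1-t)\vec{\alpha}_2$ by affinity of $\chi$ and $h_\mu(T) = t h_{\mu_1}(T) + (1-t) h_{\mu_2}(T)$ by affinity of the Kolmogorov--Sinai entropy, which together give $\textbf{h}(t\vec{\alpha}_1 + (1-t)\vec{\alpha}_2) \geq t \textbf{h}(\vec{\alpha}_1) + (1-t) \textbf{h}(\vec{\alpha}_2) - \epsilon$.

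Unwinding the definitions, $W(\vec{q}) = \sup\{h_\mu(T) + \vec{q}\cdot\chi(\mu, \vec{\Phi}_{\mathcal{A}}) : \mu \in \mathcal{M}(\Sigma, T)\}$. For $\vec{q} \in \R_+^k$, the subadditive variational principle (Theorem \ref{var1}) identifies this immediately with $P_{\vec{\Phi}_{\mathcal{A}}}(\vec{q})$. For $\vec{q} \in \R^k \setminus \R_+^k$ the identification is more delicate, since $\vec{q}\cdot\vec{\Phi}_{\mathcal{A}}$ is no longer subadditive. Here I would exploit the quasi-multiplicativity of $\vec{\Phi}_{\mathcal{A}}$ for typical cocycles (Theorem \ref{quasi}), which supplies the reverse product bounds needed to control partition sums with coefficients of arbitrary sign; combined with the convexity of $P_{\vec{\Phi}_{\mathcal{A}}}$ on $\R^k$ (Theorem \ref{convex11}) and the Gibbs-type equilibrium states from Theorem \ref{uniq11}, this extends the variational principle to all of $\R^k$.

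With $W = P_{\vec{\Phi}_{\mathcal{A}}}$ on $\R^k$ established, Theorem \ref{Cor} concludes the proof: for $\vec{\alpha} \in \mathrm{ri}(\vec{\Omega})$,
\[ \textbf{h}(\vec{\alpha}) = \inf_{\vec{q} \in \R^k}\{W(\vec{q}) - \vec{\alpha}\cdot\vec{q}\} = \inf_{\vec{q} \in \R^k}\{P_{\vec{\Phi}_{\mathcal{A}}}(\vec{q}) - \vec{\alpha}\cdot\vec{q}\}. \]
The principal obstacle I foresee is the extension of the variational principle past the positive cone $\R_+^k$: Cao--Feng--Huang supplies it on $\R_+^k$, but for $\vec{q}$ with mixed signs neither the standard subadditive pressure formalism nor the Cao--Feng--Huang argument applies directly, and it is the quasi-multiplicative structure of typical cocycles that will carry the weight of this step.
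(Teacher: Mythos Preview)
Your approach is essentially identical to the paper's: both apply Theorem \ref{Cor} to the concave function $g = \textbf{h}$ on $S = \vec{\Omega}$, identify the dual function $W$ (the paper calls it $Z$) with the pressure $P_{\vec{\Phi}_{\mathcal{A}}}$ via the variational principle, and invoke quasi-multiplicativity (Theorem \ref{quasi}) together with Theorem \ref{convex11} to handle $\vec{q}$ outside the positive cone. If anything, your write-up is more explicit than the paper's on the convexity of $\vec{\Omega}$, the concavity of $\textbf{h}$, and the need to extend the variational principle beyond $\R_+^k$, which the paper dispatches in a single clause.
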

\begin{proof}
It is clear that $\vec{\Omega}$ is non-empty and convex. We define $g:\vec{\Omega} \to \R$ by
\[g(\vec{\alpha})=\sup\{h_{\mu}(T): \mu \in \mathcal{M}(\Sigma, T), \quad (\chi_{1}(\mu, \mathcal{A}), \ldots, \chi_{k}(\mu, \mathcal{A}))=\vec{\alpha}\}.\]

It is easy to see that $g$ is a real-valued concave function on $\vec{\Omega}.$ We define
\[Z(\vec{x}):=\sup\{g(\vec{\alpha})+\vec{\alpha}.\vec{x}: \vec{\alpha}\in \vec{\Omega}\}, \quad \forall \vec{x} \in \R^{k}.\]

Let $f:\R^{k} \to \R \cup \{+\infty\}$ be the function which agrees with $-g$ on $\vec{\Omega}$ but is $+\infty$ everywhere else. Then, $f$ is convex  and has $\vec{\Omega}$ as its effective domain, i.e. $\vec{\Omega}=\{\vec{x}, f(\vec{x})< \infty \}.$ By the definition of $Z$, $Z$ is equal to the conjugate function of $f$, so $Z$ is a convex function on $\R^{k}$ (see Subsection \ref{lege}).

We have
\[g(\vec{\alpha})=\inf\{Z(\vec{x})-\vec{\alpha}.\vec{x}: \vec{x}\in \R^{k}\},\] 
for all $\vec{\alpha}\in \text{ri}(\vec{\Omega})$, by Theorem \ref{Cor}.

By Theorem \ref{quasi}, $\vec{\Phi_{\mathcal{A}}}$ is simultaneously quasi-multiplicative. Therefore,  $P_{\vec{\Phi}_{\mathcal{A}}}(\vec{q})$ is a convex function on $\R^{k}$ by Theorem \ref{convex11}. Then, by variational principle 
\[Z(\vec{x})=P_{\vec{\Phi}_{\mathcal{A}}}(\vec{x}) .\]
\end{proof}

\begin{proof}[Proof of Theorem B] Theorem B is just the one-dimensional version
of Theorems \ref{concave1} and \ref{addth1}. Also, note that $\mathring{\Omega}=\text{ri}(\Omega).$
\end{proof}
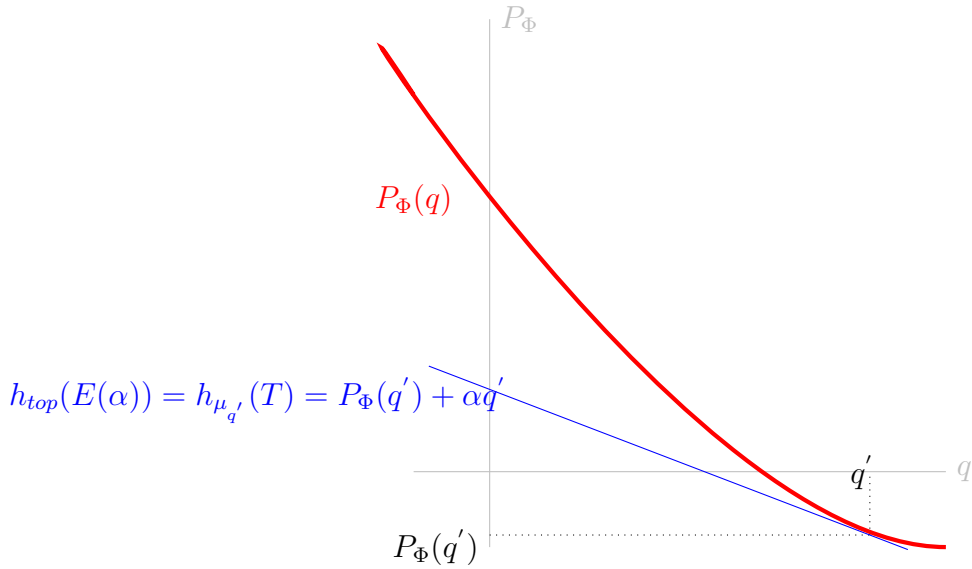
\begin{figure}[ht]
\begin{tikzpicture}
\tikzset{
    show curve controls/.style={
        decoration={
            show path construction,
            curveto code={
                \draw [blue, dashed]
                    (\tikzinputsegmentfirst) -- (\tikzinputsegmentsupporta)
                    node [at end, cross out, draw, solid, red, inner sep=2pt]{};
                \draw [blue, dashed]
                    (\tikzinputsegmentsupportb) -- (\tikzinputsegmentlast)
                    node [at start, cross out, draw, solid, red, inner sep=2pt]{};
            }
        }, decorate
    }
}

\draw [gray!50]  (-1,0) -- (6,0)node[black][pos=0.8,right]{\(q^{'}\)} node[pos=1.0,right]{\(q\)} ;
\draw [gray!50]  (0,-1) -- (0,6)node[black][pos=0.0,left]{\(P_{\Phi}(q^{'})\)}node[pos=1.0,right]{\(P_{\Phi}\)} ;
\draw[dotted]    (0, -0.84)-- (5, -0.84);
\draw[dotted]    (5, -0.84)-- (5, 0);
\draw [red] plot [smooth cycle] coordinates {(-1,4) (6,-1)} ;

\draw[blue] (5.5, -1.0331)--(5, -0.84)-- (-0.8,1.4) node[pos=0.8,left]{\(h_{top}(E(\alpha))=h_{\mu_{q^{'}}}(T)=P_{\Phi}(q^{'})+\alpha q^{'}\)};
 \draw [ultra thick,red] (-1,5) to[out=123,in=180](6,-1);

\node[red]at (-1,3.6){\(P_{\Phi}(q)\)} ;

\end{tikzpicture}
\caption{$P_{\Phi}(. )$ is a convex function for $q\in \R$. The blue line is tangent to  $P_{\Phi}(.)$ at $q$ with slope $-\alpha=P_{\Phi}'(q)$.} \label{fig3}
\end{figure}

\begin{rem}In the locally constant cocycles case with Bernoulli measures, Theorem \ref{addth1} is true for $\vec{\alpha}\in \vec{\Omega}$ under strongly irreducible assumption, which means we do not need pinching assumption in this case.
\end{rem}



Now, we are going to show that the closure of the set where the entropy spectrum is positive is equal the Lyapunov spectrum for typical cocycles. This result is first attempt to characterize Lyapunov spectrum as a set of positive entropy spectrum. The main input of our argument will be the fact that the topological pressure is convex for typical cocycles, and Theorems \ref{concave1} and \ref{addth1}. Then, we can show the concavity of the entropy spectrum of Lyapunov exponents by Theorem B.

We recall that $T:\Sigma\rightarrow \Sigma$ is a topologically mixing subshift of finite type and $\mathcal{A}:\Sigma \rightarrow GL(k, \R)$ is a H\"older continuous function. We always assume $h_{\rm top}(T)>0$.

\begin{lem}\label{concave2}
Let $\mathcal{A} \in \mathcal{W}$. Then, $h_{\rm top}(E(\alpha))$ is concave on the convex set $\mathring{L}$.
\end{lem}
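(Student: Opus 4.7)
The strategy is to extract concavity as an immediate consequence of Theorem B together with the convexity of the subadditive pressure (Theorem \ref{convex11}). By Theorem B, for every $\alpha$ in the interior of $\Omega = \{\chi(\mu,\mathcal{A}) : \mu \in \mathcal{M}(\Sigma, T)\}$ one has
\[
h_{top}(E(\alpha)) = \inf_{q \in \R} \bigl\{ P_{\Phi_\mathcal{A}}(q) - \alpha q \bigr\}.
\]
For each fixed $q \in \R$, the map $\alpha \mapsto P_{\Phi_\mathcal{A}}(q) - \alpha q$ is affine (hence concave) in $\alpha$. Since the pointwise infimum of any family of concave functions is concave, $\alpha \mapsto h_{top}(E(\alpha))$ is concave on $\mathring{\Omega}$.

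The remaining task is to identify $\mathring{L}$ with $\mathring{\Omega}$. The inclusion $\Omega \subseteq L$ is immediate from Kingman's subadditive ergodic theorem: for any ergodic $\mu \in \mathcal{M}(\Sigma, T)$, the limit $\lim_n \tfrac{1}{n}\log\|\mathcal{A}^n(x)\|$ exists for $\mu$-a.e.\ $x$ and equals $\chi(\mu,\mathcal{A})$. For the reverse inclusion, given $\alpha \in L$ realized by some $x \in \Sigma$, any weak-$\ast$ accumulation point of the empirical measures $\frac{1}{n}\sum_{k=0}^{n-1}\delta_{T^k x}$ is $T$-invariant, and the quasi-multiplicativity furnished by Theorem \ref{quasi} provides enough control on the products $\|\mathcal{A}^n\|$ to ensure one may pick such an accumulation measure $\mu$ with $\chi(\mu,\mathcal{A}) = \alpha$. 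Thus $L = \Omega$ and, in particular, $\mathring{L} = \mathring{\Omega}$, which completes the argument.

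The main technical obstacle lies in the reverse inclusion $L \subseteq \Omega$, because $\mu \mapsto \chi(\mu, \mathcal{A})$ is only upper semi-continuous in $\mu$ for general subadditive cocycles, so one cannot blindly pass to the limit in accumulation measures. Typicality bypasses this issue: quasi-multiplicativity yields Gibbs-type estimates (as in Theorem \ref{uniq11}) that control the relation between the subadditive averages along orbits and integrals against invariant measures. Once $\mathring{L} = \mathring{\Omega}$ is in place, the Legendre-transform representation from Theorem B delivers concavity without any further work.
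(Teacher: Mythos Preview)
Your core idea --- deduce concavity from a Legendre-type representation as an infimum of affine functions --- is exactly what the paper does. The difference is in which result you invoke for that representation, and your choice creates an unnecessary gap.

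You appeal to Theorem B, which gives $h_{top}(E(\alpha)) = \inf_{q\in\R}\{P_{\Phi_\mathcal{A}}(q) - \alpha q\}$ only for $\alpha \in \mathring{\Omega}$, and this forces you to identify $\mathring{L}$ with $\mathring{\Omega}$. Your argument for the inclusion $L \subseteq \Omega$ is not a proof: the appeal to quasi-multiplicativity and ``Gibbs-type estimates'' does not show that some weak-$\ast$ accumulation point $\mu$ of the empirical measures along $x$ satisfies $\chi(\mu,\mathcal{A}) = \alpha$. Quasi-multiplicativity gives lower bounds on norms of products over concatenated words; it does not give the two-sided control needed to match an orbit limit $\tfrac{1}{n}\log\|\mathcal{A}^n(x)\|\to\alpha$ with the Lyapunov exponent of a limit measure. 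The standard subadditive argument yields only $\chi(\mu,\mathcal{A}) \geq \alpha$ for such accumulation points, not equality.

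The paper bypasses this entirely by citing Theorem \ref{concave1} instead of Theorem B. Theorem \ref{concave1} (via \cite[Theorem 4.10]{FH}) delivers the same Legendre formula, but valid for every $\alpha \in L$, not only $\mathring{\Omega}$. With that formula on $L$, concavity on $\mathring{L}$ is immediate (infimum of affine functions is concave; convexity of $\mathring{L}$ itself is Theorem \ref{closed}), and no comparison of $L$ with $\Omega$ is needed. The fix to your argument is a one-line change: invoke Theorem \ref{concave1} rather than Theorem B.
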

\begin{proof}
The topological pressure $P_{\Phi_{\mathcal{A}}}( .)$ is convex by Theorem \ref{convex11} and $\mathring{L}$ is  convex by Theorem \ref{closed}. Moreover, by Theorem \ref{concave1}, we have
\[
 h_{\rm top}(E(\alpha))=\inf_{t\in \R}\{P_{\Phi_{\mathcal{A}}}(t)- \alpha t\}.\]
Since the Legendre transform of the convex function is concave (cf. \cite[Theorem 1.1.2]{HL}), $h_{\rm top}(E(\alpha))$ is concave.
\end{proof}

\begin{lem}\label{positive-new}
Assume that a nonnegative function $f$ defined on a convex domain $D$ is concave and achieves a positive value at some point $x\in D$. Then $f(y)>0$ for all $y\in \mathring{D}$.
\end{lem}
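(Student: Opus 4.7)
The plan is to exploit concavity by expressing an arbitrary interior point as a strict convex combination of the given positive point and a third point still in $D$; the positivity of $f$ at the first point and the nonnegativity at the third will force strict positivity at the convex combination.

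More precisely, let $x_0 \in D$ be the point with $f(x_0)>0$ and fix an arbitrary $y \in \mathring{D}$. If $y=x_0$ we are done, so assume $y \neq x_0$. Since $y$ lies in the interior of $D$, the line through $x_0$ and $y$ can be extended slightly past $y$ while remaining in $D$; that is, there exist $z \in D$ and $t \in (0,1)$ such that
\[
y = t\, x_0 + (1-t)\, z.
\]
By concavity of $f$ on $D$ we get
\[
f(y) \geq t\, f(x_0) + (1-t)\, f(z) \geq t\, f(x_0) > 0,
\]
using $f(z) \geq 0$ and $t>0$, $f(x_0)>0$.

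There is essentially no hard step. The only point that requires a remark is the existence of the extension point $z$, which is a standard consequence of $y$ being an interior point of the convex set $D$: choose a ball $B(y,\varepsilon)\subset D$ and take $z = y + \varepsilon'(y-x_0)/\|y-x_0\|$ for sufficiently small $\varepsilon'>0$. Then the identity $y = t x_0 + (1-t) z$ holds with a unique $t \in (0,1)$. Concavity and nonnegativity then close the argument immediately.
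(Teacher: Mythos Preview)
Your proof is correct and follows essentially the same approach as the paper's own proof: both express the interior point $y$ as a strict convex combination of the positive point and an auxiliary point in $D$, then apply concavity together with nonnegativity. Your version is in fact slightly more careful, since you handle the case $y=x_0$ separately and explain explicitly how to obtain the extension point $z$ using a ball around $y$.
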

 \begin{proof}
Let $x$ be in $D$ such that $f(x)>0$. For any point $y$ in the interior of $D$, we can always choose a point $z\in \mathring{D}$ such that:
 \[ (1-\lambda)x+\lambda z=y,\]
for some $\lambda \in (0,1)$. Hence,
 \[(1-\lambda)f(x)+\lambda f(z) \leq  f(y).\]
 Therefore, $f(y)>0$ for all $y\in \mathring{D}$.
 \end{proof}

\begin{thm}\label{posi1}
For $\alpha \in \mathring{L}$, $h_{\rm top}(E(\alpha))>0.$
\end{thm}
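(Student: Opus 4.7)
The plan is to combine the two preceding lemmas directly. Since $E(\alpha)\ne\emptyset$ for every $\alpha\in L$ (by the very definition of the Lyapunov spectrum), we have $h_{\rm top}(E(\alpha))\ge 0$ throughout $L$. By Lemma \ref{concave2}, this nonnegative function is concave on $\mathring{L}$, so by Lemma \ref{positive-new} it suffices to exhibit a single point $\alpha_0\in\mathring{L}$ at which $h_{\rm top}(E(\alpha_0))>0$.

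To produce such a point, I would pass through the measure of maximal entropy. Let $\mu_{\rm max}$ be an ergodic measure of maximal entropy for the topologically mixing subshift of finite type $(\Sigma,T)$, so that $h_{\mu_{\rm max}}(T)=h_{\rm top}(T)>0$ (the existence and ergodicity being standard for mixing SFTs). Set $\alpha^*:=\chi(\mu_{\rm max},\mathcal{A})\in L$. By Kingman's subadditive ergodic theorem applied to the subadditive potential $\Phi_{\mathcal{A}}$, $\mu_{\rm max}$-almost every $x$ satisfies $\lim \tfrac{1}{n}\log\|\mathcal{A}^{n}(x)\|=\alpha^*$, i.e.\ $\mu_{\rm max}(E(\alpha^*))=1$. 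The classical subset bound for topological entropy then gives
\[
h_{\rm top}(E(\alpha^*))\;\ge\; h_{\mu_{\rm max}}(T)\;=\;h_{\rm top}(T)\;>\;0.
\]
If $\alpha^*\in\mathring{L}$, Lemma \ref{positive-new} applied with $D=\mathring{L}$ finishes the proof.

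If instead $\alpha^*$ lies at an endpoint of the interval $L$, I would extend the concavity-based argument by invoking the Legendre-transform formula furnished by Theorem B, namely $h_{\rm top}(E(\alpha))=\inf_{q\in\R}\{P_{\Phi_{\mathcal{A}}}(q)-\alpha q\}$ for $\alpha\in\mathring{L}$. Since $P_{\Phi_{\mathcal{A}}}$ is convex on $\R$ by Theorem \ref{convex11}, this infimum expression defines a concave, upper-semicontinuous extension of $\alpha\mapsto h_{\rm top}(E(\alpha))$ to all of $L$, still nonnegative and still positive at $\alpha^*$. The exact argument of Lemma \ref{positive-new} then propagates positivity to the interior: assuming $h_{\rm top}(E(\alpha))=0$ for some $\alpha\in\mathring{L}$, one picks $\alpha_1\in\mathring{L}$ with $\alpha$ strictly between $\alpha^*$ and $\alpha_1$ and uses concavity to force $h_{\rm top}(E(\alpha_1))<0$, contradicting nonnegativity.

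The main obstacle is exactly this boundary case: the Lyapunov exponent of the measure of maximal entropy might in principle sit at an endpoint of $L$, and the Legendre-transform extension of $h_{\rm top}(E(\cdot))$ from Theorem B is what rescues the argument when that happens. Once the positive value is secured somewhere reachable by concavity from $\mathring{L}$, the combination of Lemmas \ref{concave2} and \ref{positive-new} does the rest.
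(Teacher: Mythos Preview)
Your argument is correct and follows the same overall scheme as the paper: establish concavity via Lemma~\ref{concave2}, locate a single point with positive entropy, and then propagate positivity to all of $\mathring{L}$ via Lemma~\ref{positive-new}. The paper locates the positive point by contradiction (if $h_{\rm top}(E(\alpha))=0$ for every $\alpha$, then every ergodic measure would have zero entropy, contradicting $h_{\rm top}(T)>0$), whereas you do it directly by exhibiting the measure of maximal entropy and invoking Bowen's inequality $h_{\rm top}(E(\alpha^*))\ge h_{\mu_{\rm max}}(T)$. Your route is slightly more elementary in that it does not appeal to the restricted variational principle of Theorems~\ref{level set1}, \ref{concave1}, \ref{addth1} to obtain nonnegativity or the positive point.

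Your handling of the boundary case $\alpha^*\in\partial L$ is more explicit than the paper's, which simply applies Lemma~\ref{positive-new} without comment. Note, however, that Theorem~\ref{concave1} already gives the Legendre formula $h_{\rm top}(E(\alpha))=\inf_{q}\{P_{\Phi_{\mathcal{A}}}(q)-\alpha q\}$ for every $\alpha\in L$, not just for $\alpha\in\mathring{L}$; hence $\alpha\mapsto h_{\rm top}(E(\alpha))$ is concave on all of $L$, and Lemma~\ref{positive-new} applies directly with $D=L$. This removes the need for your separate extension step. The positivity of the Legendre transform at $\alpha^*$ that you assert (``still positive at $\alpha^*$'') is justified by the variational principle: $P_{\Phi_{\mathcal{A}}}(q)\ge h_{\mu_{\rm max}}(T)+q\alpha^*$ for every $q$, so $\inf_q\{P_{\Phi_{\mathcal{A}}}(q)-\alpha^* q\}\ge h_{\rm top}(T)>0$.
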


\begin{proof}
By Lemma \ref{concave2}, $h_{\rm top}(E(\alpha))$ is concave. Moreover, by Theorems \ref{level set1}, \ref{concave1} and \ref{addth1},
\[h_{\rm top}(E(\alpha))=\sup\{h_{\mu}(T) :\hspace{0.2cm}  \mu\in \mathcal{M}(\Sigma, T),\hspace{0.2cm} \chi(\mu, \mathcal{A})=\alpha\}.\]
Since the measure-theoretic entropy is non-negative, $h_{\rm top}(E(\alpha))\geq0$.

We claim that there is $\alpha$ such that $h_{\rm top}(E(\alpha))>0.$ Let us assume $h_{\rm top}(E(\alpha))=0$ for all $\alpha \in \mathring{L}$. Then, as by Oseledets Theorem for every ergodic measure $\mu$ supported on $(\Sigma, T)$ there exist a common value of Lyapunov exponent shared $\mu$-almost everywhere, we must have $h(\mu)=0$ for every ergodic measure $\mu$. Thus, by variational principle, $h_{\rm top}(T) = \sup_{\mu} h(\mu) = 0$, which is a contradiction. Consequently, by Lemma \ref{positive-new}, $h_{\rm top}(E(\alpha))>0$ for all $\alpha \in \mathring{L}.$
\end{proof}
\begin{rem} Entropy spectrum at boundary of Lyapunov spectrum is not necessarily positive. In fact, there is a conjecture, which is known as \textit{Meta conjecture},  that says that under generic assumptions the entropy spectrum at boundary of Lyapunov spectrum is zero (which would mean that $h_{top}(E(\beta(\mathcal{A})))=0$);  this phenomenon is often referred to as \textit{ergodic optimization of Lyapunov exponents}, see for example \cite{Bo}. In the additive potential case, instead, this phenomenon is often referred to as \textit{ergodic optimization of Birkhoff averages}, see for example \cite{J}.
\end{rem}

\begin{figure}[ht]

\centering

\begin{tikzpicture}[baseline=(current bounding box.north)]

\begin{scope}
    \clip (-1.5,0) rectangle (1.5,1.5);
    \draw (0,0) circle(1.5);
    \draw (-1.5,0) -- (1.5,0);
\end{scope}
%
\node[below left= 1mm of {(-1.5,0)}] {$\alpha_{\min}$};
\node[below right= 1mm of {(1.5,0)}] {$\alpha_{\max}$};
\node[above right= 1mm of {(60:1.5)}] {};
\node[above left= 1mm of {(120:1.5)}] {};

\draw[black]    (0, -2)-- (0, 4)node[pos=1.0,right]{\(P_{\Phi_{\mathcal{A}}}\)};
\draw[black]    (-3, 0)-- (4, 0)node[pos=1.0,right]{\(\alpha\)};
\node[point] at (-1.5, 0) {};
\node[point] at (1.5, 0) {};
\end{tikzpicture}

\end{figure}

\begin{proof}[Proof of Theorem A]
That is a direct consequence of Theorem \ref{posi1}.
\end{proof}

 Park \cite{P} proved Theorem \ref{closed} for higher dimensional case. That means, $\vec{L}$ is closed and convex. So, we can obtain the following generalization of Theorem A to the Lyapunov spectrum of of all Lyapunov exponents. 
\begin{thm}\label{dense}
$\overline{\{ \vec{\alpha} \in \R^{k}, \hspace{0,2cm} h_{top}(E(\vec{\alpha}))>0 \}}= \vec{L}.$
\end{thm}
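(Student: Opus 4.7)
The plan is to mirror the strategy used for Theorem A (Theorems \ref{concave2}, \ref{positive-new}, \ref{posi1}), replacing one-dimensional intervals by convex subsets of $\R^{k}$ and the interior by the relative interior.

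First, I would establish concavity of the multifractal spectrum. By Theorem \ref{convex11}, the pressure $\vec{q} \mapsto P_{\vec{\Phi}_{\mathcal{A}}}(\vec{q})$ is a real-valued convex function on $\R^{k}$. Combining Theorem \ref{concave1} (which applies since $\mathcal{A}\in\mathcal{W}$ is simultaneously quasi-multiplicative by Theorem \ref{quasi}) with Theorem \ref{addth1}, one has
\[
h_{\rm top}(E(\vec{\alpha}))=\inf_{\vec{q}\in\R^{k}}\bigl\{P_{\vec{\Phi}_{\mathcal{A}}}(\vec{q})-\vec{\alpha}\cdot\vec{q}\bigr\}\qquad \text{for every } \vec{\alpha}\in\vec{L},
\]
so $h_{\rm top}(E(\vec{\alpha}))$ is the Legendre transform of a convex function and hence concave on the convex domain $\vec{L}$ given by Theorem \ref{closed}.

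Next, I would show that $h_{\rm top}(E(\vec{\alpha}))>0$ somewhere. By the restricted variational principle (Theorems \ref{level set1} and \ref{addth1}),
\[
h_{\rm top}(E(\vec{\alpha}))=\sup\bigl\{h_{\mu}(T):\mu\in\mathcal{M}(\Sigma,T),\ \chi_{i}(\mu,\mathcal{A})=\alpha_{i}\text{ for all }1\le i\le k\bigr\}.
\]
If $h_{\rm top}(E(\vec{\alpha}))=0$ for every $\vec{\alpha}\in\vec{L}$, then by Oseledets' theorem each ergodic $\mu$ is supported in a single level set $E(\vec{\alpha})$, forcing $h_{\mu}(T)=0$, and by the classical variational principle $h_{\rm top}(T)=0$, contradicting $h_{\rm top}(T)>0$. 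Hence there exists $\vec{\alpha}_{0}\in\vec{L}$ with $h_{\rm top}(E(\vec{\alpha}_{0}))>0$.

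Now I would apply the relative-interior version of Lemma \ref{positive-new}: a nonnegative concave function on a convex set that is positive at one point is strictly positive throughout the relative interior. The argument is identical to the proof of Lemma \ref{positive-new}, writing any $\vec{y}\in\text{ri}(\vec{L})$ as $(1-\lambda)\vec{\alpha}_{0}+\lambda\vec{z}$ for a suitable $\vec{z}\in\text{ri}(\vec{L})$ and $\lambda\in(0,1)$, using the fact that from every relative interior point one can continue past the point $\vec{\alpha}_{0}$ inside $\vec{L}$. Therefore $h_{\rm top}(E(\vec{\alpha}))>0$ for every $\vec{\alpha}\in\text{ri}(\vec{L})$.

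Finally, to conclude, I would use that $\vec{L}$ is nonempty, convex and closed (Theorem \ref{closed}), so $\overline{\text{ri}(\vec{L})}=\vec{L}$. Combined with the obvious inclusion $\{\vec{\alpha}:h_{\rm top}(E(\vec{\alpha}))>0\}\subset\vec{L}$, this yields
\[
\overline{\{\vec{\alpha}\in\R^{k}:h_{\rm top}(E(\vec{\alpha}))>0\}}=\vec{L}.
\]
The main technical point is handling the possibility that $\vec{L}$ has empty interior in $\R^{k}$ (for instance when $\mathcal{A}$ is volume-preserving, forcing $\vec{L}$ to lie in a proper affine subspace); this is why the argument is carried out relative to $\text{ri}(\vec{L})$ rather than the topological interior. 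Everything else is a straightforward adaptation of the one-dimensional proof.
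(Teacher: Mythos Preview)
Your proposal is correct and follows essentially the same approach as the paper, which simply remarks that Theorem~\ref{dense} is the higher-dimensional generalization of Theorem~A using Theorem~\ref{closed}. You are in fact more careful than the paper in one respect: you explicitly work with the relative interior $\text{ri}(\vec{L})$ rather than the topological interior, which is necessary when $\vec{L}$ lies in a proper affine subspace of $\R^{k}$---a point the paper leaves implicit.
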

We remark that
the concavity of a function defined on a convex set implies the continuity of
the function in the interior, and that
the continuity of the entropy under the change of the Lyapunov exponents
implies the continuity of the Lyapunov spectrum.

We will discuss the continuity of the entropy spectrum of Lyapunov exponents, that is, the topological entropy of level sets of points with a common given Lyapunov exponent.  In the locally constant cocycles case, Lemma \ref{convergence1} follows from \cite{FS}.

\begin{lem}\label{convergence1}
Assume  $\mathcal{A}_{k}, \mathcal{A}\in \mathcal{W}$ with $\mathcal{A}_{k} \rightarrow \mathcal{A}$. For $t_{k}, t>0$, let $t_{k}\rightarrow t$.
Suppose that $\alpha_{t_{k}}$ and $\alpha_{t}$ are the derivatives of $P_{\Phi_{\mathcal{A}_{k}}}( )$ and $P_{\Phi_{\mathcal{A}}}( )$ at $t_{k}$ and $t$, respectively. Then,
\[\lim_{k\rightarrow \infty} h_{top}(E_{\mathcal{A}_{k}}(\alpha_{t_{k}})) =h_{top}(E_{\mathcal{A}}(\alpha_{t})).\]
\end{lem}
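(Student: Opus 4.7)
The plan is to reduce the convergence of topological entropies of level sets to the convergence of measure-theoretic entropies of the associated equilibrium states, then exploit convexity of the pressure together with its joint continuity on $[0,\infty)\times\mathcal{W}$.

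First, since $\mathcal{A}_{k},\mathcal{A}\in\mathcal{W}$ and $t_{k},t>0$, Theorem \ref{uniq11} furnishes unique equilibrium states $\mu_{t_{k}}$ for $t_{k}\Phi_{\mathcal{A}_{k}}$ and $\mu_{t}$ for $t\Phi_{\mathcal{A}}$, and guarantees differentiability of $P_{\Phi_{\mathcal{A}_{k}}}$ at $t_{k}$ and of $P_{\Phi_{\mathcal{A}}}$ at $t$, with $\alpha_{t_{k}}=P'_{\Phi_{\mathcal{A}_{k}}}(t_{k})=\chi(\mu_{t_{k}},\mathcal{A}_{k})$ and $\alpha_{t}=P'_{\Phi_{\mathcal{A}}}(t)=\chi(\mu_{t},\mathcal{A})$. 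By Theorem \ref{derivative} applied to each cocycle, $E_{\mathcal{A}_{k}}(\alpha_{t_{k}})\neq\emptyset$ and $E_{\mathcal{A}}(\alpha_{t})\neq\emptyset$, with
\[
h_{top}(E_{\mathcal{A}_{k}}(\alpha_{t_{k}}))=h_{\mu_{t_{k}}}(T),\qquad h_{top}(E_{\mathcal{A}}(\alpha_{t}))=h_{\mu_{t}}(T).
\]
Using the equilibrium-state identity from Theorem \ref{var1}, I would rewrite these as
\[
h_{\mu_{t_{k}}}(T)=P_{\Phi_{\mathcal{A}_{k}}}(t_{k})-t_{k}\,\alpha_{t_{k}},\qquad h_{\mu_{t}}(T)=P_{\Phi_{\mathcal{A}}}(t)-t\,\alpha_{t}.
\]

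Second, Theorem \ref{continuity} yields $P_{\Phi_{\mathcal{A}_{k}}}(t_{k})\to P_{\Phi_{\mathcal{A}}}(t)$, because $(s,\mathcal{A})\mapsto P_{\Phi_{\mathcal{A}}}(s)$ is jointly continuous on $[0,\infty)\times\mathcal{W}$. Hence it remains only to show $\alpha_{t_{k}}\to\alpha_{t}$; substituting in the display above and using $t_{k}\to t$ then gives $h_{\mu_{t_{k}}}(T)\to h_{\mu_{t}}(T)$, which is the claim.

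The main obstacle is therefore the convergence of derivatives $\alpha_{t_{k}}=P'_{\Phi_{\mathcal{A}_{k}}}(t_{k})\to P'_{\Phi_{\mathcal{A}}}(t)=\alpha_{t}$. For this I would invoke the convexity of each $P_{\Phi_{\mathcal{A}_{k}}}$ and of $P_{\Phi_{\mathcal{A}}}$ granted by Theorem \ref{convex11}, together with the pointwise convergence $P_{\Phi_{\mathcal{A}_{k}}}(s)\to P_{\Phi_{\mathcal{A}}}(s)$ for every $s\in(0,\infty)$ coming from Theorem \ref{continuity}. The standard fact from convex analysis (e.g.\ Theorem 25.7 of Rockafellar) is that if convex functions $f_{k}$ on an open convex set converge pointwise to a convex $f$ which is differentiable at an interior point $t$, then for any $s_{k}\to t$ and any subgradients $a_{k}\in\partial f_{k}(s_{k})$, one has $a_{k}\to f'(t)$. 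Applied to $f_{k}=P_{\Phi_{\mathcal{A}_{k}}}$, $f=P_{\Phi_{\mathcal{A}}}$, $s_{k}=t_{k}$ and $a_{k}=\alpha_{t_{k}}$, this delivers $\alpha_{t_{k}}\to\alpha_{t}$.

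Combining the three pieces, taking the limit $k\to\infty$ in
\[
h_{top}(E_{\mathcal{A}_{k}}(\alpha_{t_{k}}))=P_{\Phi_{\mathcal{A}_{k}}}(t_{k})-t_{k}\,\alpha_{t_{k}}
\]
gives $h_{top}(E_{\mathcal{A}_{k}}(\alpha_{t_{k}}))\to P_{\Phi_{\mathcal{A}}}(t)-t\alpha_{t}=h_{top}(E_{\mathcal{A}}(\alpha_{t}))$, completing the argument. The delicate point is really the convex-analytic lemma on convergence of subgradients at points of differentiability of the limit; everything else is essentially bookkeeping built on Theorems \ref{uniq11}, \ref{derivative}, \ref{convex11}, and \ref{continuity}.
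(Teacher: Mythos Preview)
Your argument is correct and reaches the same conclusion as the paper, but by a genuinely different route. The paper argues measure-theoretically: it takes a weak-$\ast$ accumulation point $\nu$ of $(\mu_{t_{k}})$, uses upper semi-continuity of both the entropy map and the Lyapunov exponent together with Theorem \ref{continuity} to show
\[
P_{\Phi_{\mathcal{A}}}(t)=\lim_{k}\big(h_{\mu_{t_{k}}}(T)+t_{k}\chi(\mu_{t_{k}},\mathcal{A}_{k})\big)\leq h_{\nu}(T)+t\,\chi(\nu,\mathcal{A}),
\]
so $\nu\in\Eq(\Phi_{\mathcal{A}},t)$, and then uniqueness of the equilibrium state forces $\mu_{t_{k}}\to\mu_{t}$ and hence $h_{\mu_{t_{k}}}(T)\to h_{\mu_{t}}(T)$. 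You instead bypass the measures entirely: you write $h_{\mu_{t_{k}}}(T)=P_{\Phi_{\mathcal{A}_{k}}}(t_{k})-t_{k}\alpha_{t_{k}}$ and reduce the problem to convergence of the pressures (Theorem \ref{continuity}) and of the derivatives $\alpha_{t_{k}}\to\alpha_{t}$, the latter being a standard convex-analysis fact for pointwise-convergent convex functions with a differentiable limit. Your approach is cleaner and more elementary; it does not need the upper semi-continuity of entropy or of Lyapunov exponents, nor the compactness argument. The paper's approach, in exchange, yields the additional information $\mu_{t_{k}}\to\mu_{t}$ in the weak-$\ast$ topology, which your argument does not produce directly (though both arguments give $\chi(\mu_{t_{k}},\mathcal{A}_{k})\to\chi(\mu_{t},\mathcal{A})$). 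One small remark: the precise statement you need---that for pointwise-convergent convex $f_{k}\to f$ on an open interval with $f$ differentiable at $t$, any $a_{k}\in\partial f_{k}(t_{k})$ with $t_{k}\to t$ satisfies $a_{k}\to f'(t)$---follows most directly from the secant inequalities for convex functions together with the locally uniform convergence that pointwise convergence of convex functions entails; Rockafellar's Theorem~25.7 is formulated for a fixed point and differentiable $f_{k}$, so a one-line bridge to the moving-point version is worth spelling out.
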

\begin{proof}
According to Theorem \ref{uniq11}, $P_{\Phi_{\mathcal{A}}}( )$ is differentiable for any $t>0$ and there is a unique equilibrium measure $\mu_{t}$ for the subadditive potential $t\Phi_{\mathcal{A}}$ . Therefore, we have \[h_{top}(E_{\mathcal{A}}(\alpha_{t}))=h_{\mu_{t}}(T),\]  where $P_{\Phi_{\mathcal{A}}}^{'}(t)=\alpha_{t}$, by Theorem \ref{derivative}.

Taking into account the observation above, to prove the theorem it is enough to show that $h_{\mu_{t_{k}}}(T) \rightarrow h_{\mu_{t}}(T)$.

By the definition of $\Eq(\Phi_{\mathcal{A}_{k}}, t_{k})$,
\[P_{\Phi_{\mathcal{A}_{k}}}(t_{k})=h_{\mu_{t_{k}}}(T)+t_{k}\chi(\mu_{t_{k}},\mathcal{A}_{k}).\]

Notice that the Lyapunov exponents are upper semi-continuous. Moreover, the topologically mixing subshift of finite type $T:\Sigma\rightarrow \Sigma$ implies upper semi-continuity of the entropy map $\mu \mapsto h_{\mu}(T)$. Now, we conclude from above observations and Theorem \ref{continuity},
\begin{align*}
P_{\Phi_{\mathcal{A}}}(t)&=\lim_{k\rightarrow \infty} P_{\Phi_{\mathcal{A}_{k}}}(t_{k})\\&=\lim_{k\rightarrow \infty} h_{\mu_{t_{k}}}(T)+t_{k}\chi(\mu_{t_{k}},\mathcal{A}_{k})\\
&\leq h_{\mu_{t}}(T)+t\chi(\mu_{t},\mathcal{A}).
\end{align*}
This shows $\mu_{t} \in \Eq(\Phi_{\mathcal{A}}, t)$. By weak* compactness $\mu_{t_{k}}$ has a accumulation point, let us call $\mu_{t}$. According the above observation, $\mu_{t}$ is an equilibrium measure for $t\Phi_{\mathcal{A}}$. Then uniqueness of equilibrium measure implies $\mu_{t_{k}} \rightarrow \mu_{t}$. Moreover, we have equality in the above, which gives the claim. Furthermore, it shows the continuity of the Lyapunov exponent of equilibrium measures.
\end{proof}

Assume that $(\mu_{t})$ is a sequence of equilibrium measures for a subadditive potential $t\log \Phi$, where $t>0$. The author \cite{Moh20} investigated the behavior of the equilibrium measure $(\mu_{t})$ as $t\rightarrow \infty$. In the thermodynamic interpretation of the parameter $t$, it is the \textit{inverse temperature}. The limits $t\rightarrow \infty$ is called \textit{zero temperature limits}, and the  accumulation points of the measure $(\mu_{t})$ as $t\rightarrow \infty$ are called \textit{ground states}. 

Zero temperature limit is also related to ergodic optimization, as for $t\rightarrow \infty$ any accumulation points of equilibrium measure $\mu_{t}$ is a maximizing measure $\Phi.$

\begin{thm}[{{\cite[Theorem 1.1]{Moh20}}}]\label{cont1en}
Let $(X,T)$ be a topological dynamical system for which the entropy map $\mu \mapsto h_{\mu}(T)$ is upper semi-continuous and topological entropy $h_{top}(T)<\infty.$ Assume that $\Phi=\{\log \phi_{n}\}_{n=1}^{\infty}$ is a subadditive potential on the compact metric $X$. Then a family of equilibrium measures $(\mu_{t})$ for potentials $t\Phi$, where $t>0$, has a weak*  accumulation point as $t \rightarrow \infty.$ Any such accumulation point  is a Lyapunov maximizing measure for $\Phi$. Furthermore,
\begin{itemize}
\item[(i)] $\chi(\Phi, \mu)=\lim_{i\rightarrow \infty} \chi(\Phi, \mu_{t_{i}}),$
\item[(ii)]$h_{\mu}(T)=\lim_{i \rightarrow \infty} h_{\mu_{t_{i}}}(T).$
\end{itemize}
Moreover, above result works for higher dimensional case.
\end{thm}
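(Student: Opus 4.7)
The plan is to deduce all four assertions from the subadditive variational principle (Theorem \ref{var1}) together with upper semi-continuity of both the entropy map $\mu\mapsto h_{\mu}(T)$ (which is assumed) and the Lyapunov exponent functional $\nu\mapsto \chi(\nu,\Phi)$ (which I would derive from subadditivity). The existence of weak-$\ast$ accumulation points of $(\mu_{t})_{t>0}$ is immediate from compactness of $\mathcal{M}(X,T)$: for any sequence $t_{i}\to\infty$, a subsequence converges weak-$\ast$ to some $T$-invariant $\mu$, and all subsequent claims are formulated along such a convergent subsequence.

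Next I would establish upper semi-continuity of $\chi(\cdot,\Phi)$. By $T$-invariance and the subadditive inequality for $\{\log\phi_{n}\}$, the sequence $a_{n}(\nu):=\int\log\phi_{n}\,d\nu$ is subadditive in $n$, so Fekete's lemma (which also gives Kingman's theorem in this $L^{1}$ setting) yields $\chi(\nu,\Phi)=\inf_{n}a_{n}(\nu)/n$. Since each $\log\phi_{n}$ is continuous on the compact space $X$, the map $\nu\mapsto a_{n}(\nu)/n$ is weak-$\ast$ continuous, and an infimum of continuous functions is upper semi-continuous. Applying the variational principle for an arbitrary $\nu\in \mathcal{M}(X,T)$ and the equilibrium measure $\mu_{t_{i}}$:
\[
P_{\Phi}(t_{i})=h_{\mu_{t_{i}}}(T)+t_{i}\chi(\mu_{t_{i}},\Phi)\;\geq\; h_{\nu}(T)+t_{i}\chi(\nu,\Phi).
\]
Dividing by $t_{i}$ and using $h_{\nu}(T)\leq h_{top}(T)<\infty$ gives $\liminf_{i}\chi(\mu_{t_{i}},\Phi)\geq\chi(\nu,\Phi)$; taking the supremum over $\nu$ yields $\liminf_{i}\chi(\mu_{t_{i}},\Phi)\geq\beta(\Phi)$. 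Upper semi-continuity of $\chi(\cdot,\Phi)$ provides the reverse inequality $\limsup_{i}\chi(\mu_{t_{i}},\Phi)\leq\chi(\mu,\Phi)\leq\beta(\Phi)$. The sandwich simultaneously forces $\chi(\mu,\Phi)=\beta(\Phi)$, so that $\mu$ is a Lyapunov maximizing measure, and establishes claim (i).

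For (ii), I would again compare the variational principle at $\mu_{t_{i}}$ with the lower bound furnished by the accumulation point $\mu$: since $P_{\Phi}(t_{i})\geq h_{\mu}(T)+t_{i}\chi(\mu,\Phi)$, rearranging gives
\[
h_{\mu_{t_{i}}}(T)\;\geq\; h_{\mu}(T)+t_{i}\bigl(\chi(\mu,\Phi)-\chi(\mu_{t_{i}},\Phi)\bigr)\;\geq\; h_{\mu}(T),
\]
because $\chi(\mu,\Phi)=\beta(\Phi)\geq\chi(\mu_{t_{i}},\Phi)$. Taking $\liminf$ gives $\liminf_{i}h_{\mu_{t_{i}}}(T)\geq h_{\mu}(T)$; combined with upper semi-continuity of the entropy map under $\mu_{t_{i}}\to\mu$, we conclude $h_{\mu_{t_{i}}}(T)\to h_{\mu}(T)$. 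The higher-dimensional case goes through with no structural change by fixing a direction $\vec{s}\in\mathbb{R}_{+}^{k}$, writing the vector parameter as $t\vec{s}$, and repeating the argument for the subadditive potential $\vec{s}\cdot\vec{\Phi}$, using Theorem \ref{var1} in its vector-valued form.

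The main obstacle is the upper semi-continuity of $\chi(\cdot,\Phi)$ for a genuinely subadditive (rather than additive or almost additive) potential: this is what compels the Fekete/subadditivity observation, which rewrites the Kingman limit as an infimum of continuous functionals. Once this upper semi-continuity is in hand, both the maximizing property of $\mu$ and the limits in (i)--(ii) reduce to direct manipulations of the variational principle together with the upper bound $h_{\nu}(T)\leq h_{top}(T)<\infty$.
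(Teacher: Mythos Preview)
The paper does not supply a proof of this statement: Theorem~\ref{cont1en} is quoted verbatim from \cite[Theorem~1.1]{Moh20} and is used as a black box in the proof of Theorem~\ref{1dim}. So there is no in-paper proof to compare against.

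That said, your argument is correct and is the standard one. The key observations are exactly those you isolate: (a) $\nu\mapsto\chi(\nu,\Phi)$ is upper semi-continuous because, by $T$-invariance and subadditivity, $\chi(\nu,\Phi)=\inf_{n}\tfrac{1}{n}\int\log\phi_{n}\,d\nu$ is an infimum of weak-$\ast$ continuous functionals; (b) dividing the equilibrium identity by $t_{i}$ and using $0\le h_{\mu_{t_{i}}}(T)\le h_{top}(T)<\infty$ forces $\liminf_{i}\chi(\mu_{t_{i}},\Phi)\ge\beta(\Phi)$, which together with (a) pins $\chi(\mu,\Phi)=\beta(\Phi)$ and gives (i); (c) comparing against the competitor $\mu$ yields $h_{\mu_{t_{i}}}(T)\ge h_{\mu}(T)$ for every $i$, and upper semi-continuity of entropy closes the sandwich for (ii). Your treatment of the higher-dimensional case by fixing a direction $\vec{s}\in\mathbb{R}_{+}^{k}$ and applying the scalar argument to $\vec{s}\cdot\vec{\Phi}$ is also the natural reduction; note only that the paper's phrasing ``higher dimensional case'' is informal, and one should interpret $\vec{t}\to\infty$ along a fixed ray for the statement to be well posed.
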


We use the above theorem to prove the following theorem.

\begin{thm}\label{1dim}
 Suppose that $\mathcal{\mathcal{A}} \in \mathcal{W}$. If $\alpha_{t}= P_{\Phi_{\mathcal{A}}}^{'}(t)$ for $t>0.$ Then
\[ h_{top}(E(\alpha_{t})) \rightarrow h_{top}(E(\beta(\mathcal{A})) \hspace{0,2cm}\textrm{when}\hspace{0,2cm} t \rightarrow \infty.\]
\end{thm}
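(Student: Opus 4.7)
The plan is to reduce the assertion to an analysis of the unique equilibrium states $\mu_t$ for the subadditive potentials $t\Phi_{\mathcal{A}}$ and then apply the zero-temperature convergence result. First, Theorem \ref{uniq11} produces a unique equilibrium state $\mu_t$ for $t\Phi_{\mathcal{A}}$ for every $t>0$, and identifies $\chi(\mu_t,\mathcal{A})=P_{\Phi_{\mathcal{A}}}'(t)=\alpha_t$. Second, Theorem \ref{derivative} gives $h_{top}(E(\alpha_t))=h_{\mu_t}(T)$. Thus the claim reduces to showing
\[\lim_{t\to\infty} h_{\mu_t}(T) \;=\; h_{top}(E(\beta(\mathcal{A}))).\]

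Next I would invoke Theorem \ref{cont1en}: the family $(\mu_t)_{t>0}$ admits weak-$\ast$ accumulation points as $t\to\infty$; every such accumulation point $\mu$ is a Lyapunov maximizing measure for $\Phi_{\mathcal{A}}$, and along any convergent subsequence $t_i\to\infty$ one has both $\chi(\mu_{t_i},\mathcal{A})\to\chi(\mu,\mathcal{A})=\beta(\mathcal{A})$ and $h_{\mu_{t_i}}(T)\to h_\mu(T)$. Since $P_{\Phi_{\mathcal{A}}}$ is convex on $\R$ by Theorem \ref{convex11}, the derivative $t\mapsto\alpha_t$ is non-decreasing and bounded above by $\beta(\mathcal{A})$, so in fact $\alpha_t\to\beta(\mathcal{A})$ (monotonically) for the full limit.

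For the upper bound, along any subsequential limit $\mu$ we have $\chi(\mu,\mathcal{A})=\beta(\mathcal{A})$, and Theorem \ref{level set1} gives $h_\mu(T)\le h_{top}(E(\beta(\mathcal{A})))$; hence every subsequential limit of $h_{\mu_t}(T)$ is $\le h_{top}(E(\beta(\mathcal{A})))$. For the lower bound, fix any $\nu\in\mathcal{M}(\Sigma,T)$ with $\chi(\nu,\mathcal{A})=\beta(\mathcal{A})$. The subadditive variational principle (Theorem \ref{var1}) and the equilibrium identity $P_{\Phi_{\mathcal{A}}}(t)=h_{\mu_t}(T)+t\alpha_t$ yield
\[h_{\mu_t}(T)+t\alpha_t \;=\; P_{\Phi_{\mathcal{A}}}(t)\;\ge\; h_\nu(T)+t\beta(\mathcal{A}),\]
so $h_{\mu_t}(T)\ge h_\nu(T)+t(\beta(\mathcal{A})-\alpha_t)\ge h_\nu(T)$ since $\alpha_t\le\beta(\mathcal{A})$. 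Taking the supremum over all such $\nu$ and using Theorem \ref{level set1} gives $h_{\mu_t}(T)\ge h_{top}(E(\beta(\mathcal{A})))$ for every $t>0$. Combining the two bounds shows that the full limit exists and equals $h_{top}(E(\beta(\mathcal{A})))$.

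The main obstacle I anticipate is a bookkeeping one rather than a conceptual one: ensuring convergence of the full net rather than just along subsequences. This is resolved by the uniform lower bound $h_{\mu_t}(T)\ge h_{top}(E(\beta(\mathcal{A})))$, which pins every subsequential limit from below; combined with the upper bound from Theorem \ref{cont1en}(ii), every subsequential limit is forced to equal $h_{top}(E(\beta(\mathcal{A})))$, so the whole sequence converges. The other point to verify is that the hypotheses of Theorem \ref{cont1en} apply, which they do since $T$ is a topologically mixing subshift of finite type (so the entropy map is upper semi-continuous and $h_{top}(T)<\infty$), and $\Phi_{\mathcal{A}}$ is a subadditive potential.
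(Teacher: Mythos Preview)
Your proof is correct and follows essentially the same route as the paper: reduce to $h_{\mu_t}(T)\to h_{top}(E(\beta(\mathcal{A})))$ via Theorems \ref{uniq11}, \ref{derivative}, and \ref{level set1}, then invoke the zero-temperature result Theorem \ref{cont1en}. Your write-up is in fact more detailed than the paper's---the paper simply says ``That follows from Theorem \ref{cont1en}'' without spelling out the upper/lower bound sandwich, whereas you supply the standard argument (including the pleasant observation that $h_{\mu_t}(T)\ge h_{top}(E(\beta(\mathcal{A})))$ for \emph{every} $t>0$) that any accumulation point of $(\mu_t)$ must have maximal entropy among maximizing measures.
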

\begin{proof}
Since $\mathcal{\mathcal{A}}\in \mathcal{W}$, Theorem \ref{uniq11} implies that for $t>0$, there is a unique equilibrium state $\mu_{t}$ for the subadditive potential $t \Phi_{\mathcal{A}}$ such that \[\chi(\mu_{t}, \mathcal{A})=\alpha_{t}=P_{\Phi_{\mathcal{A}}}^{'}(t).\] 

By Theorem \ref{derivative}, \[h_{top}(E(\alpha_{t}))=h_{\mu_{t}}(T).\]

 We know that \[h_{top}(E(\beta(\mathcal{A}))=\sup\{h_{\mu}(T), \hspace{0,2cm} \mu \in \mathcal{M}(\Sigma, T), \hspace{0,2cm} \chi(\mu, \mathcal{A})=\beta(\mathcal{A})\}\] by Theorem \ref{level set1}. So, we only need to show that
\[ h_{\mu_{t}}(T) \rightarrow \sup\{h_{\mu}(T), \hspace{0,2cm} \mu \in \mathcal{M}(\Sigma, T), \hspace{0,2cm} \chi(\mu, \mathcal{A})=\beta(\mathcal{A})\}.\]
That follows from Theorem \ref{cont1en}.
\end{proof}
\begin{proof}[Proof of Theorem C]It follows from Theorem \ref{1dim} and \ref{convergence1}.
\end{proof}

\begin{thm}\label{thmC}
Suppose $\mathcal{A}_{l}, \mathcal{A}\in \mathcal{W}$ with $\mathcal{A}_{l} \rightarrow A$, and $\vec{t_{l}},\vec{t}\in \R_{+}^{k}$ such that $t_{l}\rightarrow t.$
Assume that $\vec{\alpha_{t_{l}}}= \nabla P_{\vec{\Phi}_{\mathcal{A}_{l}}}(\vec{t_{l}})$ and $\vec{\alpha_{t}}=\nabla P_{\vec{\Phi}_{\mathcal{A}}}(\vec{t})$. Then,
\[\lim_{l\rightarrow \infty} h_{top}(E(\vec{\alpha_{t_{l}}})) =h_{top}(E(\vec{\alpha_{t}})).\]
Moreover,
\[ h_{top}(E(\vec{\alpha_{t}})) \rightarrow h_{top}(E(\vec{\beta}(\vec{\Phi}_{\mathcal{A}})) \hspace{0,2cm}\textrm{when}\hspace{0,2cm} t \rightarrow \infty.\]
\end{thm}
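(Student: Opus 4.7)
The plan is to mirror the argument used for Theorem C (Lemma \ref{convergence1} together with Theorem \ref{1dim}), replacing each one-dimensional tool by its multi-dimensional counterpart available in the paper. The key ingredients are: Theorem \ref{uniq11}, which provides a unique equilibrium state $\mu_{\vec{t}}$ for the subadditive potential $\vec{t}\cdot \vec{\Phi}_{\mathcal{A}}$ on $\R_{+}^{k}$ together with differentiability of $P_{\vec{\Phi}_{\mathcal{A}}}$ and the identity $\nabla P_{\vec{\Phi}_{\mathcal{A}}}(\vec{t}) = \chi(\mu_{\vec{t}}, \vec{\Phi}_{\mathcal{A}})$; the higher-dimensional version of Theorem \ref{derivative}, which yields $h_{\rm top}(E(\vec{\alpha}_{\vec{t}})) = h_{\mu_{\vec{t}}}(T)$; the (generalized) joint continuity of the pressure $(\vec{s},\mathcal{B})\mapsto P_{\vec{\Phi}_{\mathcal{B}}}(\vec{s})$ on $\R_{+}^{k}\times \mathcal{W}$, which follows from Theorem \ref{continuity} applied coordinatewise (since $\vec{t}\cdot \vec{\Phi}_{\mathcal{A}}$ is a finite linear combination of pressures of exterior-power cocycles, each of which is itself typical); and Theorem \ref{cont1en}, which the author has already stated to hold in the higher-dimensional setting.

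For the first limit, I would begin by passing to a weak-$*$ subsequential limit $\mu_{\infty}$ of $\mu_{\vec{t}_{l}}$, which exists by compactness of $\mathcal{M}(\Sigma, T)$. By the variational principle (Theorem \ref{var1}),
\[
P_{\vec{\Phi}_{\mathcal{A}_{l}}}(\vec{t}_{l}) = h_{\mu_{\vec{t}_{l}}}(T) + \vec{t}_{l}\cdot \chi(\mu_{\vec{t}_{l}}, \vec{\Phi}_{\mathcal{A}_{l}}).
\]
Since $T$ is a topologically mixing subshift of finite type, the entropy map is upper semi-continuous, and the subadditivity of each $\log\varphi_{\mathcal{A}^{\wedge i},n}$ combined with the convergence $\mathcal{A}_{l}\to \mathcal{A}$ in $H^{r}_{b}$ gives upper semi-continuity of $(\mu,\mathcal{B})\mapsto \chi(\mu, \vec{\Phi}_{\mathcal{B}})$ along the joint sequence (via Fekete's lemma applied to the uniformly convergent subadditive cocycles). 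Combined with the continuity of pressure, passing to the limit in $l$ gives
\[
P_{\vec{\Phi}_{\mathcal{A}}}(\vec{t}) \leq h_{\mu_{\infty}}(T) + \vec{t}\cdot \chi(\mu_{\infty}, \vec{\Phi}_{\mathcal{A}}),
\]
so $\mu_{\infty}$ is an equilibrium state for $\vec{t}\cdot \vec{\Phi}_{\mathcal{A}}$. Uniqueness (Theorem \ref{uniq11}) forces $\mu_{\infty} = \mu_{\vec{t}}$, so $\mu_{\vec{t}_{l}} \to \mu_{\vec{t}}$, and the inequality above must be an equality, which yields simultaneously $h_{\mu_{\vec{t}_{l}}}(T) \to h_{\mu_{\vec{t}}}(T)$ and $\chi(\mu_{\vec{t}_{l}}, \vec{\Phi}_{\mathcal{A}_{l}}) \to \chi(\mu_{\vec{t}}, \vec{\Phi}_{\mathcal{A}})$. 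Since the left-hand side equals $h_{\rm top}(E(\vec{\alpha}_{\vec{t}_{l}}))$ and the right-hand side equals $h_{\rm top}(E(\vec{\alpha}_{\vec{t}}))$, the first claim is proved.

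For the second limit (the zero-temperature statement), the plan is to invoke the higher-dimensional case of Theorem \ref{cont1en} directly. Any weak-$*$ accumulation point $\mu_{\infty}$ of $\mu_{\vec{t}}$ as $\vec{t}\to \infty$ is a Lyapunov maximizing measure for $\vec{\Phi}_{\mathcal{A}}$ in the sense that $\chi(\mu_{\infty}, \vec{\Phi}_{\mathcal{A}}) = \vec{\beta}(\vec{\Phi}_{\mathcal{A}})$, and $h_{\mu_{\vec{t}}}(T) \to h_{\mu_{\infty}}(T)$ along the corresponding subsequence. By the higher-dimensional analogue of Theorem \ref{level set1} (obtained by applying Theorems \ref{concave1} and \ref{addth1} on the boundary and using the same Legendre-transform argument as in Theorem A),
\[
h_{\rm top}(E(\vec{\beta}(\vec{\Phi}_{\mathcal{A}}))) = \sup\{h_{\mu}(T) : \mu\in \mathcal{M}(\Sigma, T),\ \chi(\mu, \vec{\Phi}_{\mathcal{A}}) = \vec{\beta}(\vec{\Phi}_{\mathcal{A}})\},
\]
so that $h_{\mu_{\infty}}(T) \leq h_{\rm top}(E(\vec{\beta}(\vec{\Phi}_{\mathcal{A}})))$, while the reverse inequality follows from the fact that $h_{\rm top}(E(\vec{\alpha}_{\vec{t}})) = h_{\mu_{\vec{t}}}(T)$ is bounded above by $h_{\rm top}(E(\vec{\beta}(\vec{\Phi}_{\mathcal{A}})))$ only in the limit. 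Since the limit does not depend on the chosen subsequence, $h_{\rm top}(E(\vec{\alpha}_{\vec{t}})) \to h_{\rm top}(E(\vec{\beta}(\vec{\Phi}_{\mathcal{A}})))$ as $\vec{t}\to \infty$.

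The main obstacle is the joint semi-continuity used in the first paragraph: the subadditive Lyapunov functionals $\chi(\cdot, \vec{\Phi}_{\mathcal{B}})$ are known to be upper semi-continuous in $\mu$ for fixed $\mathcal{B}$, but here both $\mu$ and the cocycle are varying simultaneously. One has to verify that $\|\mathcal{A}_{l}^{\wedge i,n}\|$ converges uniformly on $\Sigma$ to $\|\mathcal{A}^{\wedge i,n}\|$ for each fixed $n$ and $i$, and then appeal to Fekete's lemma to transfer this uniform convergence of the $n$-step potentials into the required one-sided control of the asymptotic exponents. Once joint upper semi-continuity is in hand, the rest of the argument is a direct transcription of the one-dimensional case.
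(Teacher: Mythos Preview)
Your approach is exactly the one the paper takes: the paper's own proof of this statement reads in full ``The proof is similar to Theorems \ref{1dim} and \ref{cont1en} and is omitted,'' and your write-up is precisely the higher-dimensional transcription of Lemma \ref{convergence1} and Theorem \ref{1dim} that this sentence invites. The only slip is the parenthetical claim that $\vec{t}\cdot\vec{\Phi}_{\mathcal{A}}$ is ``a finite linear combination of pressures of exterior-power cocycles''---you mean a linear combination of the \emph{potentials} $\Phi_{\mathcal{A}^{\wedge i}}$, not of their pressures (pressure is not linear); with that corrected, the joint continuity of $(\vec{s},\mathcal{B})\mapsto P_{\vec{\Phi}_{\mathcal{B}}}(\vec{s})$ indeed follows from Park's argument underlying Theorem \ref{continuity}, and the rest of your outline is sound.
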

\begin{proof}
The proof is similar to Theorems \ref{1dim} and \ref{cont1en} and is omitted.
\end{proof}

\section{The proof of Theorem D}\label{5}
In this section, we are going to prove the continuity of the lower joint spectral radius for general cocycles under certain assumptions. This kind of result is known by Bochi and Morris \cite{BM} under 1-domination assumption for locally constant cocycles.  Breuillard and Sert \cite{BS} extended their result to the joint spectrum of locally constant cocycles. Moreover, they gave a counterexample \cite[Example 4.13]{BS} that shows that we have discontinuity the lower joint spectral for typical cocycles. Even though, we have a lot of results for the upper spectral radius, we have  few result about the lower spectral radius, which shows that working on the latter case is much harder than the former case.

Assume that $T:X\rightarrow X$ is a diffeomorphism on a compact invariant set $X$. Let $V\oplus W$ be a splitting of the tangent bundle over $X$ that is invariant by the tangent map $DT$. In this case, if vectors in $V$ are uniformly contracted by $DT$ and vectors in $W$ are uniformly expanded, then the splitting is called \textit{hyperbolic}. The more general notion is the \textit{dominated splitting}, if at each point all vectors in $V$ are more contracted than all vectors in $W$. Domination could also be called uniform projective hyperbolicity. Indeed, domination is  equivalent to $V$ being hyperbolic repeller and $W$ being hyperbolic attractor in the projective bundle.

 In the linear cocycles, we are interested in bundles of the form $X\times \R^{k}, $ where the linear cocycles are generated by $(T, \mathcal{A})$. Bochi and Gourmelon \cite{BGO} showed that a cocycle admits a dominated splitting $V\oplus W$ with $\dim V=k$ if and only if when $n\rightarrow \infty$, the ratio between the $k-th$ and $(k+ 1)-th$ singular values of the matrices of the $n-th$ iterate increase uniformly exponentially. In fact, they extended the Yoccoz's result \cite{Y} that was proved for 2-dimensional vector bundles.

\begin{defn}
We say that $\mathcal{A}$ is \textit{i-dominated} if there exist constants $C >1$, $0<\tau<1$ such that
\[\frac{\sigma_{i+1}(\mathcal{A}^{n}(x))}{\sigma_{i}(\mathcal{A}^{n}(x))}\leq C \tau^n, \hspace{0.2cm} \forall n\in \N, x\in X.\]

\end{defn}

 According to the multilinear algebra properties, $\mathcal{A}$ is $i-$dominated if and only if $\mathcal{A}^{\wedge i}$ is $1-$dominated.


Let $(X,T)$ be a TDS. We say that $\mathcal{A}:X\rightarrow GL(k, \R)$ is \textit{almost multiplicative} if there is a constant $C>0$ such that
\[||\mathcal{A}^{m+n}(x)|| \geq C ||\mathcal{A}^m(x)|| ||\mathcal{A}^n(T^m(x))|| \hspace{0.2cm}\forall x\in X, m.n\in \N.\]

We note that since clearly $||\mathcal{A}^{m+n}(x)|| \leq  ||\mathcal{A}^m(x)|| ||\mathcal{A}^n(T^m(x))|| \hspace{0.1cm}$ for all $x\in X, m,n\in \N$, the condition of almost multiplicativity of $\mathcal{A}$ is equivalent to the statement that $\Phi_{\mathcal{A}}$ is almost additive.\\

\subsection{Almost additive thermodynamic formalism}
In this subsection, we state a theorem that shows  we have the Bowen's result for almost additive sequences.

Let $T:\Sigma \to \Sigma$ be a topologically mixing subshift of finite type. We say that a subadditive sequences $\Phi:=\{\log \phi_{n}\}_{n=1}^{\infty}$ over  $(\Sigma, T)$ has \textit{bounded distortion:}  there exists $C\geq 1$ such that for any $n\in \N$ and $I \in \mathcal{L}(n)$, we have
\[ C^{-1} \leq \frac{\phi_{n}(x)}{\phi_{n}(y)} \leq C\]
for any $x, y \in [I].$

\begin{thm}[{{\cite[Theorem 10.1.9]{Bar}}}]\label{almost-unique}
Let $\Phi=\{\log \phi_{n}\}_{n=1}^{\infty}$ be an almost additive sequence over a topologically mixing subshift of finite type $(\Sigma, T)$. Assume that $\Phi$ has bounded distortion. Then :
\begin{itemize}
\item[1.]There is a unique equilibrium measure for $\Phi$,
\item[2.]there is a unique invariant Gibbs measure for $\Phi$,
\item [3.]the two measures coincide and are ergodic.
\end{itemize}
\end{thm}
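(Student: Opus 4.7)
The plan is to follow the classical Bowen--Ruelle strategy for H\"older potentials, suitably adapted to the almost additive setting; bounded distortion plays the role that H\"older regularity plays in the additive case, and together with almost additivity it supplies the uniform multiplicative control that every step of the adaptation needs. First I would \textbf{construct a Gibbs measure}. For each $n \in \N$ choose a representative $x_I \in [I]$ for every $I \in \mathcal{L}(n)$, form the weighted empirical measure
\[ \nu_n = \frac{1}{Z_n} \sum_{I \in \mathcal{L}(n)} \phi_n(x_I) \, \delta_{x_I}, \qquad Z_n = \sum_{I \in \mathcal{L}(n)} \phi_n(x_I), \]
average over the first $n$ iterates of $T$, and extract a weak-$\ast$ accumulation point $\mu$. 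Almost additivity, bounded distortion, and topological mixing (which lets one compare partial sums of $\phi_n$ along different cylinders) combine to yield the two-sided Gibbs estimate
\[ C^{-1} \leq \frac{\mu([I])}{e^{-n P_\Phi(T)} \phi_n(x)} \leq C \qquad \text{for all } x \in [I],\ I \in \mathcal{L}(n). \]

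The second step is \textbf{ergodicity and uniqueness among invariant Gibbs measures}. Using topological mixing of $T$ I would prove a quasi-independence inequality: there exists $m \in \N$ such that for every $n, k \geq 0$ and all cylinders $[I], [J]$,
\[ C^{-2}\, \mu([I])\mu([J]) \leq \mu\bigl([I] \cap T^{-(n+k+m)}[J]\bigr) \leq C^{2}\, \mu([I])\mu([J]). \]
Standard approximation by cylinders upgrades this to a mixing-type estimate on the full Borel $\sigma$-algebra, which in particular gives ergodicity. Because any two invariant Gibbs measures for $\Phi$ are mutually absolutely continuous with Radon--Nikodym derivative bounded above and below by $C^2$, and because at least one of them is ergodic, the derivative must be a.e.\ constant equal to $1$, proving uniqueness of the invariant Gibbs measure.

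Finally, for the \textbf{equilibrium state} assertion I would verify that $\mu$ saturates the subadditive variational principle (Theorem~\ref{var1}). A direct Shannon--McMillan--Breiman computation using the Gibbs bounds gives
\[ h_\mu(T) + \chi(\mu, \Phi) = P_\Phi(T), \]
so $\mu$ is an equilibrium state. Uniqueness is the main obstacle: given any other ergodic equilibrium state $\nu$, one must show that $\nu$ is itself Gibbs, so that the previous step forces $\nu = \mu$. The argument I would attempt is to pass to the ergodic decomposition, reduce to $\nu$ ergodic, and then use strict concavity of the entropy on ergodic components together with the two-sided almost additive control to show that if $\nu$ violates the Gibbs bound on a set of positive measure then $h_\nu(T) + \chi(\nu,\Phi) < P_\Phi(T)$, contradicting the equilibrium assumption. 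This step is delicate because the almost additive regime does not come with a full Ruelle transfer operator out of the box, and the entire argument breaks down for genuinely subadditive potentials; it is precisely the combination of almost additivity and bounded distortion that keeps the chain of variational inequalities tight enough to conclude.
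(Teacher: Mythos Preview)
The paper does not prove this theorem at all: it is quoted verbatim from Barreira's monograph \cite[Theorem~10.1.9]{Bar} and used as a black box. So there is no ``paper's own proof'' to compare against; your sketch is an independent outline of the result rather than a reconstruction of anything in the paper.

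That said, your outline is broadly in line with how the result is actually established in the almost additive literature (Barreira, and earlier Mummert and Barreira--Doutor): construct a Gibbs measure via averaged weighted atomic measures, prove quasi-Bernoulli type mixing from the Gibbs bounds and topological mixing to get ergodicity, deduce uniqueness among Gibbs measures by absolute continuity, and then identify Gibbs measures with equilibrium states via the variational principle. The one place where your sketch is genuinely soft is the final uniqueness step. The argument you propose---``if $\nu$ violates the Gibbs bound on a set of positive measure then $h_\nu(T)+\chi(\nu,\Phi)<P_\Phi(T)$''---is not how this is typically closed, and invoking ``strict concavity of the entropy on ergodic components'' is not the right lever (entropy is affine, not strictly concave, on $\mathcal{M}(\Sigma,T)$). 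The standard route is instead: for any ergodic $\nu$, use the Shannon--McMillan--Breiman theorem together with the Gibbs property of $\mu$ to show
\[
h_\nu(T)+\chi(\nu,\Phi) \;\leq\; P_\Phi(T) - H(\nu\,|\,\mu),
\]
where $H(\nu\,|\,\mu)$ is a relative-entropy type nonnegative quantity that vanishes only when $\nu=\mu$ (equivalently, compare $-\frac{1}{n}\log\nu[x_0\ldots x_{n-1}]$ against $-\frac{1}{n}\log\mu[x_0\ldots x_{n-1}]$ and use Jensen's inequality). This yields uniqueness directly without trying to force $\nu$ to be Gibbs first. If you want to keep your ``any equilibrium state is Gibbs'' strategy, you need a concrete mechanism---the relative entropy inequality above is exactly that mechanism, and it does not require a transfer operator.
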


\begin{thm}\label{derivative:aa}
Let $t\in \R^{k}$ and  let $\vec {\Phi}$ be an almost additive potential. Then, Proposition \ref{eq11} holds for $t\in \R^{k}$.
\end{thm}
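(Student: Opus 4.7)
The plan is to reduce the statement to Proposition \ref{eq11} by exploiting the fact that, in contrast to general subadditive potentials, almost additive potentials remain almost additive after multiplication by an arbitrary real scalar, so the obstacle that forces Proposition \ref{eq11} to be stated only for non-negative exponents disappears.

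First I would verify the following stability property: if $\Phi=\{\log\phi_{n}\}_{n=1}^{\infty}$ is almost additive with constant $C\ge 1$, then $t\Phi$ is almost additive for every $t\in\R$. Indeed, raising
\[
C^{-1}\phi_{n}(x)\phi_{m}(T^{n}x)\le\phi_{n+m}(x)\le C\phi_{n}(x)\phi_{m}(T^{n}x)
\]
to the $t$-th power preserves almost additivity; the direction of the inequalities reverses when $t<0$, but the two-sided structure survives with constant $C^{|t|}$. Summing over the coordinates shows that $\vec{t}.\vec{\Phi}$ is almost additive for every $\vec{t}\in\R^{k}$, and in particular is subadditive up to a bounded additive constant that affects neither $P_{\vec{\Phi}}(\vec{t})$ nor $\chi(\mu,\vec{t}.\vec{\Phi})$.

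Next I would show that for each almost additive $\Phi_{i}$ with constant $C_{i}$, the map $\mu\mapsto\chi(\mu,\Phi_{i})$ is weak-$*$ continuous on $\mathcal{M}(X,T)$. Iterating the almost-additive inequality and integrating yield the \emph{uniform} bound
\[
\left|\tfrac{1}{n}\int\log\phi_{n,i}\,d\mu-\chi(\mu,\Phi_{i})\right|\le\frac{\log C_{i}}{n}\qquad\text{for all }\mu\in\mathcal{M}(X,T),\ n\in\N,
\]
so $\chi(\cdot,\Phi_{i})$ is a uniform limit of weak-$*$ continuous functionals, hence continuous. Combined with upper semi-continuity of the entropy map, this implies that the free-energy functional $\mu\mapsto h_{\mu}(T)+\vec{t}.\chi(\mu,\vec{\Phi})$ is upper semi-continuous on the compact convex set $\mathcal{M}(X,T)$ for every $\vec{t}\in\R^{k}$, and therefore $\Eq(\vec{\Phi},\vec{t})$ is a non-empty, compact, convex subset of $\mathcal{M}(X,T)$.

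Finally, with these two ingredients in place, the subdifferential identification
\[
\partial P_{\vec{\Phi}}(\vec{t})=\{\chi(\mu,\vec{\Phi}):\mu\in\Eq(\vec{\Phi},\vec{t})\}
\]
is obtained by running the Feng--Huang argument for Proposition \ref{eq11} verbatim: one inclusion follows by applying the subadditive variational principle (Theorem \ref{var1}) at the perturbed parameter $\vec{t}+\vec{s}$,
\[
P_{\vec{\Phi}}(\vec{t}+\vec{s})\ge h_{\mu}(T)+(\vec{t}+\vec{s}).\chi(\mu,\vec{\Phi})=P_{\vec{\Phi}}(\vec{t})+\vec{s}.\chi(\mu,\vec{\Phi}),
\]
which shows $\chi(\mu,\vec{\Phi})\in\partial P_{\vec{\Phi}}(\vec{t})$; the reverse inclusion follows from a standard exposed-face selection in convex analysis together with the variational principle. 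The only place where care is needed, and what I view as the main obstacle, is the sign handling in the first step, but the two-sided nature of the almost-additive inequalities makes the extension from $\R_{+}^{k}$ to $\R^{k}$ essentially automatic.
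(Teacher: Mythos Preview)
Your proposal is correct and is, in essence, an unpacking of the paper's own proof. The paper simply writes ``It follows from \cite[Theorem 3.3]{FH}'': Feng--Huang's Theorem 3.3 is stated there for \emph{asymptotically additive} (in particular almost additive) sequences and is already formulated for all $\vec t\in\R^{k}$, so the restriction to $\R_{+}^{k}$ in Proposition~\ref{eq11} of the present paper is only an artifact of phrasing it for general subadditive potentials. Your three steps---stability of almost additivity under arbitrary real scalars, weak-$*$ continuity of $\mu\mapsto\chi(\mu,\Phi_{i})$ via the uniform $\log C_{i}/n$ bound (this is exactly Lemma~\ref{AAP}, i.e.\ \cite[Lemma~A.4]{FH}), and then the standard subgradient/variational-principle computation---are precisely the ingredients Feng--Huang use. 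So your argument and the paper's coincide; you have written out what the citation contains, which is a plus for self-containment but not a different route.
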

\begin{proof}
It follows from \cite[Theorem 3.3]{FH}.
\end{proof}

Given an almost additive potential $\Phi=\{\log \phi_{n}\}_{n=1}^{\infty}$. Feng and Huang \cite[ Lemma  A.4.]{FH} proved the following lemma:
\begin{lem}\label{AAP}
Let $\mu\in \mathcal{M}(X, T)$. Then the map $\mu \mapsto \chi(\mu, \Phi)$ is continuous on $\mathcal{M}(X, T)$.
\end{lem}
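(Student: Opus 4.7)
The plan is to exploit the almost-additivity of $\Phi$ to show that the finite-$n$ approximations $\mu \mapsto \frac{1}{n}\int \log \phi_n \, d\mu$ converge to $\chi(\mu, \Phi)$ uniformly in $\mu \in \mathcal{M}(X,T)$. Since each such approximation is weak-$*$ continuous, a uniform limit argument then yields the continuity of $\chi(\cdot,\Phi)$.

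Concretely, I would fix the constant $C \geq 1$ from the definition of almost additivity and write $a_n(\mu):=\int \log\phi_n\,d\mu$. Taking logarithms in
\[ C^{-1}\phi_n(x)\phi_m(T^n x) \;\leq\; \phi_{n+m}(x) \;\leq\; C\,\phi_n(x)\phi_m(T^n x), \]
integrating against $\mu \in \mathcal{M}(X,T)$, and using $T$-invariance to identify $\int \log\phi_m \circ T^n\,d\mu = \int \log\phi_m\,d\mu$, I obtain
\[ a_n(\mu)+a_m(\mu)-\log C \;\leq\; a_{n+m}(\mu) \;\leq\; a_n(\mu)+a_m(\mu)+\log C. \]
Thus $\{a_n(\mu)+\log C\}_n$ is subadditive and $\{a_n(\mu)-\log C\}_n$ is superadditive, and Fekete's lemma gives that $\chi(\mu,\Phi)=\lim_{n\to\infty}\tfrac{1}{n}a_n(\mu)$ exists and equals both the infimum and supremum of the corresponding normalized sequences. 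In particular one obtains the key quantitative bound
\[ \Bigl|\chi(\mu,\Phi)-\tfrac{1}{n}a_n(\mu)\Bigr| \;\leq\; \tfrac{\log C}{n}, \]
valid for every $\mu \in \mathcal{M}(X,T)$ and every $n\in \N$, with a bound independent of $\mu$.

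Because $X$ is compact and each $\phi_n$ is continuous and strictly positive, $\log\phi_n$ is a bounded continuous function on $X$, so $\mu\mapsto a_n(\mu)$ is weak-$*$ continuous for each fixed $n$. The uniform estimate above then realizes $\chi(\cdot,\Phi)$ as the uniform limit on $\mathcal{M}(X,T)$ of continuous functions, whence $\chi(\cdot,\Phi)$ is itself continuous. The only non-routine step is the passage from the pointwise almost-additivity of $\{\log\phi_n(x)\}$ to almost-additivity of the integrated sequence $\{a_n(\mu)\}$, which requires precisely the $T$-invariance of $\mu$; once that is noted, the remainder of the argument is standard Fekete plus uniform convergence.
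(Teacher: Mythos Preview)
Your argument is correct and complete. The paper itself does not give a proof of this lemma but simply attributes it to Feng--Huang \cite[Lemma~A.4]{FH}; the proof you have written---uniform approximation of $\chi(\mu,\Phi)$ by the weak-$*$ continuous functionals $\mu\mapsto \tfrac1n\int\log\phi_n\,d\mu$ via the Fekete-type bound $\bigl|\chi(\mu,\Phi)-\tfrac1n a_n(\mu)\bigr|\le (\log C)/n$---is precisely the standard argument one finds there.
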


\subsection{A cocycle is almost multiplicativite under a cone condition}

Let $\mathcal{A}:X\to GL(k, \R)$ be a matrix cocycle over a TDS $(X,T)$. We say that a family of convex cones $\mathcal{C}:=(C_x)_{x\in X}$ is a $\ell$-dimensional invariant cone field on $X$ if
\[\mathcal{A}(x)C_{x} \subset C_{T(x)}^{o} \hspace{0.2cm}\forall x\in X,\]
where $\ell$ is the dimension $\mathcal{C}.$

 Let $V$ be a vector space over the reals.
\begin{defn}
Fix a convex cone $C \subset V$. Given $v, w \in  C$, let 
\begin{equation}\label{Hil1}
\alpha(v,w)=\sup\{\lambda >0 | w-\lambda v \in C  \}, \hspace{0.2cm}\beta(v,w)=\inf\{\mu >0 | \mu v-w \in C\},
\end{equation}
with $\alpha = 0$ and/or $\beta = \infty$ if the corresponding set is empty. The cone
distance between $v$ and $w$ is
\begin{equation}\label{Hil2}
d_{C}(v, w)=\log \frac{\beta(v, w)}{\alpha(v, w)}.
\end{equation} 
The distance $d_{c}$ is called Hilbert projective (pseudo) metric.
\end{defn}
\begin{thm}\label{contraction}
 Let $C_{1} \subset V_{1}$ and $C_{2} \subset V_{2}$ be convex cones, and $L:V_{1}\rightarrow V_{2}$ be a linear map such that $L(C_{1}) \subset C_{2}^{o}$. Assume that $\triangle:=\sup_{\hat{v}, \hat{\psi}\in L(C_{1})} d_{C_{2}}(\hat{v}, \hat{w}).$ Then for all $v,w\in C_{1}$, we have 
\[d_{C_{2}}(Lv, Lw) \leq \tanh(\frac{\triangle}{4})d_{C_{1}}(v, w),\]
where we use the convention that $\tanh\infty = 1.$
\end{thm}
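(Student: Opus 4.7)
The plan is to reprove the classical Birkhoff--Hopf contraction theorem for strictly positive linear maps. Any linear $L$ with $L(C_1)\subset C_2$ is automatically Hilbert non-expansive; what needs to be extracted from the stronger hypothesis $L(C_1)\subset C_2^o$ is the sharp contraction rate $\tanh(\Delta/4)$. I first dispose of the trivial cases: if $d_{C_1}(v,w)=\infty$ or $\Delta=\infty$, then $\tanh(\Delta/4)=1$ by convention and the conclusion reduces to the non-expansion $d_{C_2}(Lv,Lw)\leq d_{C_1}(v,w)$. This in turn follows at once from (\ref{Hil1}), because, setting $\alpha:=\alpha(v,w)$ and $\beta:=\beta(v,w)$ (computed in $C_1$), both $w-\alpha v$ and $\beta v-w$ lie in $\overline{C_1}$, so their images under $L$ lie in $C_2$ by linearity, giving $\alpha\leq\alpha_{C_2}(Lv,Lw)\leq\beta_{C_2}(Lv,Lw)\leq\beta$.

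Assume now $d_{C_1}(v,w),\Delta\in(0,\infty)$. Introduce the auxiliary vectors $u_1:=Lw-\alpha Lv=L(w-\alpha v)$ and $u_2:=\beta Lv-Lw=L(\beta v-w)$; by the strict positivity hypothesis, both lie in $L(C_1)\subset C_2^o$, and therefore $d_{C_2}(u_1,u_2)\leq\Delta$. A direct computation of the suprema and infima in (\ref{Hil1}) applied to the pair $(u_1,u_2)$, written in terms of $\alpha':=\alpha_{C_2}(Lv,Lw)$ and $\beta':=\beta_{C_2}(Lv,Lw)$, yields
\[
\alpha_{C_2}(u_1,u_2)=\frac{\beta-\beta'}{\beta'-\alpha},\qquad \beta_{C_2}(u_1,u_2)=\frac{\beta-\alpha'}{\alpha'-\alpha},
\]
so that the diameter bound $d_{C_2}(u_1,u_2)\leq\Delta$ becomes the cross-ratio inequality
\[
\frac{(\beta-\alpha')(\beta'-\alpha)}{(\beta-\beta')(\alpha'-\alpha)}\;\leq\;e^{\Delta}.
\]

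The main obstacle, and the crux of the argument, is to convert this cross-ratio bound into the asserted contraction $\log(\beta'/\alpha')\leq\tanh(\Delta/4)\log(\beta/\alpha)$. The observation that makes everything work is that the Hilbert metric on the interval $(\alpha,\beta)\subset\mathbb{R}$ is isometric to the hyperbolic line via the coordinate $h(x)=\tfrac12\log\tfrac{x-\alpha}{\beta-x}$, and under this identification the cross-ratio above is exactly $\exp(2(h(\beta')-h(\alpha')))$. Normalizing $\alpha=1$, $\beta=e^M$ with $M:=d_{C_1}(v,w)$, and writing $m:=\log(\beta'/\alpha')=d_{C_2}(Lv,Lw)$, the problem ``maximize $m$ subject to the cross-ratio being at most $e^{\Delta}$'' becomes a one-parameter optimization whose extremum, computed via elementary hyperbolic identities or equivalently by a Lagrange multiplier calculation showing that the maximizer is the symmetric configuration $h(\alpha')+h(\beta')=0$, is saturated exactly at $m=\tanh(\Delta/4)\cdot M$. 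This identity is the technical heart of Birkhoff--Hopf; once established it delivers $d_{C_2}(Lv,Lw)\leq\tanh(\Delta/4)\,d_{C_1}(v,w)$ and completes the proof.
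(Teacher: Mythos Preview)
The paper does not prove this statement at all: its entire proof is the single line ``See \cite{NL}'', deferring to Lemmens--Nussbaum for this classical Birkhoff--Hopf contraction theorem. Your proposal, by contrast, sketches the standard direct proof, so the two approaches are not really comparable --- you are supplying content the paper deliberately outsources.

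Your argument is essentially the correct classical one, and the cross-ratio computation for $\alpha_{C_2}(u_1,u_2)$ and $\beta_{C_2}(u_1,u_2)$ is right. Two points deserve more care. First, $w-\alpha v$ and $\beta v-w$ lie only in $\overline{C_1}$ (since $\alpha,\beta$ are a supremum and infimum), not necessarily in $C_1$ itself, so to place $u_1,u_2$ inside $L(C_1)$ and invoke the diameter bound you should either argue via a limiting approximation $\alpha_\varepsilon<\alpha$, $\beta_\varepsilon>\beta$, or use continuity of $L$ together with closedness of the cones. Second, the final optimization --- that the cross-ratio constraint $\frac{(\beta-\alpha')(\beta'-\alpha)}{(\beta-\beta')(\alpha'-\alpha)}\le e^{\Delta}$ forces $\log(\beta'/\alpha')\le\tanh(\Delta/4)\log(\beta/\alpha)$ --- is asserted rather than carried out; you indicate the mechanism (identify the interval with a hyperbolic line, reduce to the symmetric extremal configuration) but do not actually perform the calculation. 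Since this identity is, as you say, the technical heart of the theorem, a complete proof would need to execute it, for instance by the substitution $\alpha'=e^{(M-m)/2}$, $\beta'=e^{(M+m)/2}$ and the resulting hyperbolic-sine identity.
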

\begin{proof}
See \cite{NL}.
\end{proof}
We are also going to use the following lemma in Proposition \ref{prop:add}.

\begin{lem}\label{bounded_hilbert}Let $V$ be a finite dimensional vector space. Suppose that $C_{1}$ and $C_{2}$ are two convex cones in $V$ such that $C_1\subset C_2^o$ and $d_{C_{2}}$ is the Hilbert metric on $C_2$. Then $C_1$ is bounded in metric $d_{C_2}$. 
\end{lem}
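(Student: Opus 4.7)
The plan is to reduce the problem to a compact cross-section of $C_1$ and exploit the projective invariance of the Hilbert metric. Directly from \eqref{Hil1}, one checks that $\alpha(cv, dw) = (d/c)\alpha(v, w)$ and $\beta(cv, dw) = (d/c)\beta(v, w)$ for all $c, d > 0$, so $d_{C_2}(cv, dw) = d_{C_2}(v, w)$. Fixing any norm $\|\cdot\|$ on $V$ with unit sphere $S$, it therefore suffices to bound the $d_{C_2}$-diameter of the slice $K := \overline{C_1} \cap S$, since every nonzero $v \in C_1$ is a positive scalar multiple of an element of $K$.

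Finite-dimensionality then yields a uniform inner radius. Reading the hypothesis $C_1 \subset C_2^o$ in the standard strict sense for cones (namely $\overline{C_1}\setminus\{0\} \subset C_2^o$), the slice $K$ is closed and norm-bounded, hence compact, and it is contained in the open set $C_2^o$. Consequently there exists $\epsilon \in (0,1)$ such that $B(v, \epsilon) \subset C_2$ for every $v \in K$.

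The desired diameter bound now falls out by direct computation. Fix $v, w \in K$. Since $\|(w - \lambda v) - w\| = \lambda$ and $B(w,\epsilon) \subset C_2$, we have $w - \lambda v \in C_2$ whenever $\lambda \leq \epsilon$, so $\alpha(v, w) \geq \epsilon$. For the other inequality, exploiting that $C_2$ is a cone, $\mu v - w \in C_2$ iff $v - w/\mu \in C_2$, and the latter holds whenever $1/\mu \leq \epsilon$, giving $\beta(v, w) \leq 1/\epsilon$. Hence $d_{C_2}(v,w) = \log(\beta/\alpha) \leq 2\log(1/\epsilon)$ uniformly on $K$, and projective invariance lifts this bound to all of $C_1\setminus\{0\}$. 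The one delicate point is securing the compactness of $K$: if $C_1$ is not closed, one passes to its closure (which still satisfies the containment in $C_2^o$ under the intended reading of the hypothesis), and the argument goes through unchanged.
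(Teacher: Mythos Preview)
Your proof is correct and arrives at the same conclusion, but by a different route than the paper. The paper works in the projective picture and invokes the cross-ratio description of the Hilbert metric: it bounds $d_{C_2}(v,w)$ in terms of the projective distance from $C_1$ to $\partial C_2$ and the projective diameter of $C_1$, using that for collinear $A,v,w,B$ with $A,B\in\partial C_2$ the Hilbert distance is (the log of) a cross ratio. You instead stay on a norm cross-section of $C_1$ and estimate $\alpha(v,w)$ and $\beta(v,w)$ directly from \eqref{Hil1} via a uniform $\epsilon$-ball contained in $C_2$. Both arguments rest on the same compactness input (a positive separation between $\overline{C_1}\setminus\{0\}$ and $\partial C_2$); your version has the advantage of being entirely self-contained from the definitions \eqref{Hil1}--\eqref{Hil2}, whereas the paper's appeals to the cross-ratio formula without deriving it. Your explicit reading of the hypothesis as $\overline{C_1}\setminus\{0\}\subset C_2^o$ is exactly what both arguments require, and it matches how the lemma is applied in Proposition~\ref{prop:add}.
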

\begin{proof}
Let us denote $d$ as the usual distance on the projective space. Since $C_{1} \subset C_{2}^{o}$, $d(C_{1}, \partial C_{2})>0$. Hence, for every $v,w \in C_{1}$  the distances $d(A,v), d(B,w)$ are uniformly bounded from below by $c_1=d(C_1, \partial C_2)$, where $A,B$ are the intersections of the line $\overline{vw}$ with $\partial C_1$. On the other hand, $d(v,w)$ is uniformly bounded from above by $c_2=\diam_d(C_1)$. Thus, $d_{C_2}(v,w) \leq \log ( (c_1+c_2)/c_1)$.

\end{proof}

\begin{prop} \label{prop:add}
Let $X$ be a compact metric space, and let $\mathcal{A}: X\rightarrow GL(k, \R)$ be a matrix cocycle over a TDS $(X,T)$. Let $(C_r)_{r\in X}$ be a 1-dimensional invariant cone field on $X$. Then, there exists $\kappa>0$ such that for every $m,n>0$ and for every $x\in X$ we have

\[
||\mathcal{A}^{m+n}(x)|| \geq \kappa ||\mathcal{A}^m(x)|| \cdot ||\mathcal{A}^n(T^m(x))||.
\]
\end{prop}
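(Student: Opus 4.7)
The strategy is to combine Theorem~\ref{contraction} (the Birkhoff--Hopf contraction of the Hilbert metric under strict cone invariance) with a basis-decomposition argument inside the invariant cones, in order to upgrade the trivial submultiplicativity $\|\mathcal{A}^{m+n}(x)\|\leq\|\mathcal{A}^m(x)\|\,\|\mathcal{A}^n(T^m(x))\|$ to the desired reverse estimate. The first ingredient is a \emph{uniform Hilbert contraction}: by continuity of $x\mapsto C_x$ and of $\mathcal{A}$, together with compactness of $X$ and Lemma~\ref{bounded_hilbert}, I would extract a constant $\Delta<\infty$ with $\diam_{d_{C_{T(x)}}}(\mathcal{A}(x)C_x)\leq\Delta$ for every $x\in X$; iterating via Theorem~\ref{contraction} then yields exponentially shrinking diameters $\diam_{d_{C_{T^n(x)}}}(\mathcal{A}^n(x)C_x)\leq\Delta\tanh(\Delta/4)^{n-1}$.

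The second ingredient is that cone-restricted expansion captures the operator norm up to a uniform constant: by continuity of $x\mapsto C_x$ on compact $X$, pick unit sections $e_1(x),\ldots,e_k(x)\in C_x^o$ spanning $\R^k$ with uniformly bounded condition number $M$; every $v\in\R^k$ splits as $v=v^+-v^-$ with $v^\pm\in C_x$ and $\|v^\pm\|\leq M\|v\|$, giving $\|\mathcal{A}^n(x)\|\leq 2M\sup\{\|\mathcal{A}^n(x)v\|:v\in C_x,\,\|v\|=1\}$. The third ingredient is a \emph{Hilbert-to-norm translation}: for a constant $K=K(\Delta)$, if $u_1,u_2\in C_y$ are unit vectors with $d_{C_y}(u_1,u_2)\leq\Delta$ and $L\in GL(k,\R)$ sends $C_y$ into some cone in $\R^k$, then $\|Lu_1\|/\|Lu_2\|\in[K^{-1},K]$. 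I would prove this first in an equivalent norm adapted to the cone order (so that $\alpha u_2\leq u_1\leq\beta u_2$ with $\beta/\alpha\leq e^\Delta$ yields the bound directly), and then transfer it to the Euclidean norm with a uniform factor using continuity and compactness of $X$.

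To assemble, fix $x\in X$, $m,n\geq1$, and set $y=T^m(x)$. By the basis step pick a unit $v_1\in C_x$ with $\|\mathcal{A}^m(x)v_1\|\geq(2M)^{-1}\|\mathcal{A}^m(x)\|$ and write $\mathcal{A}^m(x)v_1=\rho w_1$ with $w_1\in C_y^o$ a unit vector. By the first step, $w_1$ lies in the image cone $\tilde C:=\mathcal{A}(T^{m-1}(x))C_{T^{m-1}(x)}\subset C_y^o$, which has Hilbert diameter $\leq\Delta$ in $C_y$. Apply the basis argument again inside $\tilde C$ (which still spans $\R^k$ with uniformly bounded condition number $M'$, because $\mathcal{A}$ and its inverse are uniformly bounded on the compact $X$) to choose $v_2\in\tilde C$ unit with $\|\mathcal{A}^n(y)v_2\|\geq(2M')^{-1}\|\mathcal{A}^n(y)\|$. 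Since $d_{C_y}(w_1,v_2)\leq\Delta$, the Hilbert-to-norm step yields $\|\mathcal{A}^n(y)w_1\|\geq K^{-1}\|\mathcal{A}^n(y)v_2\|$, hence
\[
\|\mathcal{A}^{m+n}(x)\|\geq\|\mathcal{A}^{m+n}(x)v_1\|=\rho\,\|\mathcal{A}^n(y)w_1\|\geq\kappa\,\|\mathcal{A}^m(x)\|\,\|\mathcal{A}^n(T^m(x))\|,
\]
with $\kappa:=(2M)^{-1}(2M')^{-1}K^{-1}$, uniform in $x,m,n$.

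The main obstacle is the Hilbert-to-norm translation of the second paragraph: passing from projective closeness in $d_{C_y}$ to bounded ratios of Euclidean norm expansions requires a careful choice of adapted norm and uniform-in-$x$ control of its equivalence constants. A subsidiary delicate point is ensuring that the basis argument applies inside the reduced image cone $\tilde C$ with constants independent of $x$ and $m$; this uses in an essential way that $\mathcal{A}$ takes values in $GL(k,\R)$ and is continuous on the compact $X$, so that $\mathcal{A}(z)$ is uniformly nondegenerate as $z$ varies.
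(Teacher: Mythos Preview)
Your proposal is correct and follows a route that is recognisably in the same spirit as the paper's proof but packages the key steps differently. Both arguments rest on the Birkhoff--Hopf contraction and on the fact that the cones are full-dimensional; where they diverge is in how they pass from cone-restricted expansion to the operator norm.

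The paper first builds an auxiliary invariant cone field $D_x\subset C_x^o$ (the convex hull of the one-step images $\mathcal A(y)C_y$, $y\in T^{-1}(x)$), and then runs a telescoping estimate: writing $\gamma_x(v)=\log(\|\mathcal A(x)v\|/\|v\|)$, the Lipschitz bound on $\gamma_{T^i(x)}$ together with the geometric decay of $d_{T^i(x)}(\mathcal A^i(x)v,\mathcal A^i(x)w)$ gives $|\gamma_x^m(v)-\gamma_x^m(w)|\le K_4$ uniformly for $v,w\in D_x$. The bridge to $\|\mathcal A^m(x)\|$ is then a separate SVD-based lemma (their Lemma~\ref{lem:supp}): if $\log\|Av\|$ oscillates by at most $K$ on a $d$-ball of radius $r$, then it is within $\rho(K,r)$ of $\log\|A\|$. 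Since each $D_x$ contains a ball of uniform radius, this closes the argument.

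Your route replaces both of these devices. Instead of the telescoped Lipschitz sum you use a single Hilbert-to-norm comparison (bounded $d_{C_y}$-distance between unit vectors forces a bounded ratio of their expansions under any map sending $C_y$ into one of the cones $C_z$), and instead of the SVD lemma you use a basis decomposition inside the cone to show that the operator norm is controlled, up to a fixed factor, by the cone-restricted supremum. In effect you only need the one-step diameter bound $\diam_{d_{C_{T(z)}}}(\mathcal A(z)C_z)\le\Delta$; the exponentially shrinking diameters you mention are never used and can be dropped. The trade-off is that your third ingredient requires an order-unit (adapted) norm on each $C_z$ with equivalence constants uniform in $z$; this is standard and follows from compactness, but note that as stated your constant $K$ must depend on the \emph{family} $(C_z)$ and not merely on ``some cone'', since in the application the image cone is $C_{T^n(y)}$. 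The paper's version avoids adapted norms entirely by working with the Euclidean Lipschitz constant of $\gamma_x$ and the explicit singular-value computation, at the cost of the extra cone field $D_x$ and the telescoping sum.
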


\begin{proof}
Let us start from the notation. Denote by $\pi$ the natural projection from $\R^{k}$ to the projective space $\mathbb{P}\R^{k}$ and by $d$ the natural metric on $\mathbb{P}\R^{k}$. For a family of convex cones $(C_r)_{r\in X}$, all contained in the interior $C^o$ of another convex cone $C$, we define their convex hull as

\[
\conv(C_r) = \{v\in C; \pi(v) = \pi(\sum_i \alpha_i v_i) \hspace{0.1cm}\textrm{for some}\hspace{0.1cm} \alpha_i\geq 0, \sum_i \alpha_i=1, v_i\in C_{r_i}\}
\]
The Hausdorff distance in metric $d$ between $C$ and $\conv(C_r)$ equals the supremum of Hausdorff distances between $C$ and $C_r$ (to be absolutely precise, the Hausdorff distance is defined for compact sets and the metric $d$ is defined on the projective space, so we mean here the Hausdorff distance between $\overline{\pi(C)}$ and $\overline{\pi(\conv(C_r))}$).This supremum is positive, hence $\conv(C_r)\subset C^o$.

For every $x\in X$ the set $T^{-1}(x)$ is compact. Thus, we can define

\[
D_x = \conv(\{\mathcal{A}(y)C_y; y\in T^{-1}(x)\})
\]
for $x\in T(X)$ and, by compactness, we have $D_x\subset C_x^o$. 
 We choose $D_x$ as any convex cone contained in $C_x^o$ for $x\in X\setminus T(X)$, we only demand that $x\to D_x$ is a continuous map (this can be done because $X$ is compact, hence $X\setminus T(X)$ is open in $X$).
One can check that, as $D_x\subset C_x^o$, we have

\[
\mathcal{A}(x) D_x \subset (\mathcal{A}(x) C_x)^o \subset D_{T(x)}^o.
\]
Hence, $(D_x)$ is another invariant cone field, strictly contained in $(C_x)$.

Let for each $x\in X$ $d_x$ be the Hilbert metric in $C_x$. Let $d$ be the usual metric on $\mathbb{P}\R^{k}$. We have the following properties:

\begin{itemize}
\item Each $D_x$ is bounded in $d_x$ by Lemma \ref{bounded_hilbert}. By compactness of $X$, there exists $K_1>0$ such that $\diam_{d_x}(D_x)<K_1$ for all $x\in X$.
\item In each $D_x$ the metric $d_x$ is equivalent to $d$. By compactness of $X$, there exists $K_2>1$ such that for every $x\in X$ for every $v,w\in D_x$ we have $K_2^{-1} d_x(v,w) \leq d(v,w) \leq K_2 d_x(v,w)$.
\item Each $\mathcal{A}(x): D_x \to D_{T(x)}$ is a contraction by Theorem \ref{contraction}. By compactness of $X$, there exists $\lambda<1$ such that for every $x\in X$ for every $v,w\in D_x$ we have $d_{T(x)}(\mathcal{A}(x)v, \mathcal{A}(x)w) \leq \lambda d_x(v,w)$.
\item For $v\in C_x$ denote \begin{equation}\label{fun}
\gamma_x(v):= \log (\|\mathcal{A}(x)v\|/\|v\|) \in \R.
\end{equation} The map $v\to\gamma_x(v)$ is Lipschitz (in metric $d$) on $D_x$. By compactness of $X$, there exists $K_3>0$ such that for every $x\in X$ the map $\gamma_x$ is $K_3$-Lipschitz (in metric $d$) on $D_x$.
\item For every $x\in X$ the convex cone $D_x$ contains (for some $v_x\in D_x\cap \mathbb{P}\R^{k}$ and $r_x>0$) a ball $B(v_x,r_x)=\{w\in \mathbb{P}\R^{k}; d(v_x,w)<r_x\}$. By compactness of $X$, there exists $r>0$ such that for every $x\in X$ we have $D_x \supset B(v_x,r)$ for some $v_x\in D_x\cap \mathbb{P}\R^{k} $.
\end{itemize}

Choose some $x\in X$ and $v,w\in D_x$. Fix $m>0$. Denote

\[
\gamma_x^m(v) = \log \frac {\|\mathcal{A}^m(x)v\|} {\|v\|} = \sum_{i=0}^{m-1} \gamma_{T^i(x)}(\mathcal{A}^i(x)v).
\]
Note three obvious properties of this function:
\begin{itemize}
\item $\gamma_x$ is a projective function, that is $\gamma_x (v)=\gamma_x (\alpha v)$ for $\alpha>0$. Thus, we can define $\gamma_x$ on the projective space $\mathbb{P}\R^{k}$. The same holds for $\gamma_x^m$.
\item $\gamma_x^m(v) \leq \log ||\mathcal{A}^m(x)||$,
\item $\gamma_x^{m+n}(v) = \gamma_x^m(v) + \gamma_{T^m(x)}^n(\mathcal{A}^m(x) v)$.
\end{itemize}

We have
\begin{align*}
d(\mathcal{A}^i(x)v, \mathcal{A}^i(x)w) &\leq K_2 d_{T^i(x)} (\mathcal{A}^i(x)v, \mathcal{A}^i(x)w)\\ &\leq K_2 \lambda^i d_x(v,w) \leq K_2 \lambda^i K_1.
\end{align*}
Hence (see \eqref{fun}),
\[
|\gamma_{x}^m(v)-\gamma_{x}^m(w)| \leq K_3 \sum_{i=0}^{m-1} d(\mathcal{A}^i(x)v, \mathcal{A}^i(x)w)  \leq K_4:= K_1 K_2 K_3 \frac 1 {1-\lambda}
\]
for every $v,w\in D_x$.

To finish the proof we need the following lemma.

\begin{lem} \label{lem:supp}
Let $A\in GL(k, \R)$ and let $K,r>0$. Assume $|\gamma(v)-\gamma(w)|<K$ for some $v\in \mathbb{P}\R^{k}$ and all $w\in B(v,r)$, where $\gamma(v)=\log \|Av\|$. Then there exists a constant $\rho=\rho(K,r)$, depending on $K$ and $r$ but not on $A$, such that $\gamma(v) \geq \log ||A|| - \rho(K,r)$.
\end{lem}

Before proving Lemma \ref{lem:supp} let us observe that it indeed implies the assertion of Proposition \ref{prop:add}. As $D_x$ contains some ball $B(v,r)$ with $v\in D_x\cap \mathbb{P}\R^{k}$, we can apply the lemma to the matrix $\mathcal{A}^m(x)$, obtaining $\log ||\mathcal{A}^m(x)|| \leq \rho(K_4,r) + \gamma_x^m(v)$. Hence, for every $w\in D_x$ we have

\[
\log ||\mathcal{A}^m(x)|| \leq \rho(K_4,r) + K_4+ \gamma_x^m(w).
\]
Similarly, $D_{T^m(x)}$ contains a ball of size $r$, hence for every $u\in D_{T^m(x)}$ we have
\[
\log ||\mathcal{A}^n(T^m(x))|| \leq \rho(K_4,r) + K_4+ \gamma_{T^m(x)}^n(u).
\]
Thus, choosing $u=\mathcal{A}^m(x) w$ we get

\begin{align*}
\log ||\mathcal{A}^m(x)|| + \log ||\mathcal{A}^n(T^m(x))|| &\leq 2 \rho(K,r) + 2K_4 + \gamma_x^{m+n}(w)\\ & \leq 2 \rho(K,r) + 2K_4 + \log ||\mathcal{A}^{m+n}(x)||
\end{align*}
which is our assertion.

Now, let us come back and prove Lemma \ref{lem:supp}.

\begin{proof}
We start by a decomposition $A=O_1 D O_2$, where $O_1, O_2$ are orthogonal matrices and $D$ is a diagonal matrix with elements $\pm(\sigma_i(A))$ (the singular values of $A$). It is enough to prove the assertion for the matrix $D$.

So, let $D$ be a diagonal matrix. Let $e$ be the eigenvector corresponding to the maximal eigenvalue: $|De|=||D||$. Even when $v.e=0$, the ball $B(v,r)$ still must contain a vector $w$ such that $|w.e|\geq 1/2 \cdot\sin r$. We have $w=|w.e|e + (1-(w.e)^2)^{1/2}e'$, where $e.e'=0$. Hence,

\[
\gamma(w) = \log |Dw| \geq \log (|w.e| \cdot |De|) = \log |w.e| + \log ||D|| \geq \log (\frac 12 \sin r) + \log ||D||.
\]
Thus, for every $u\in B(v,r)$ we have

\[
\gamma(u) \geq \gamma(w) - K \geq \log ||D|| + \log (\frac 12 \sin r)  - K.
\]
We are done.
\end{proof}
\end{proof}

Given $l$-dimensional subspace $E(x)\subset T_{x} \R^{k}$, we define the convex cone
\[C_{x}=\left\{(u, v) \in E(x) \oplus E(x)^{\perp}:\|v\| \leq \|u\|\right\}.\]
We say that a differentiable map $f: \mathbb{R}^{k} \rightarrow \mathbb{R}^{k}$ satisfies a $l-$\textit{cone condition} on a set $\Lambda \subset \mathbb{R}^{k}$ if for each $x \in \Lambda$ a $l$-dimensional subspace $E(x) \subset T_{x} \mathbb{R}^{k}$ varying continuously with $x$ such that
\[
\left(d_{x} f\right) C_{x} \subset C_{f(x)}^{o}.
\]
\begin{cor}
Let $M$ be a compact manifold in dimension $d$. Let $T\in \Diff^1(M)$. Let $\mathcal{A}(x)=DT(x)\in GL(k,\R)$ satisfying a $1-$cone condition. Then, there exists $\kappa>0$ such that for every $m,n>0$ and for every $x\in M$ we have

\[
||\mathcal{A}^{m+n}(x)|| \geq \kappa ||\mathcal{A}^m(x)|| \cdot ||\mathcal{A}^n(T^m(x))||.
\]
\end{cor}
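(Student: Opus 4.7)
The plan is to derive this corollary as an immediate consequence of Proposition \ref{prop:add}, once the derivative cocycle on $M$ is cast in the form required by that proposition and the $1$-cone condition is re-interpreted as a 1-dimensional invariant cone field.

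First I would fix a Riemannian metric $g$ on the compact smooth manifold $M$; this makes $M$ into a compact metric space and simultaneously equips every fiber $T_xM$ with an inner product, so that $\|\mathcal{A}^n(x)\|$ is the intrinsic operator norm of $D_xT^n : T_xM \to T_{T^n(x)}M$. Since $T\in\Diff^1(M)$, the cocycle $\mathcal{A}(x)=D_xT$ is a continuous cocycle over the topological dynamical system $(M,T)$. The $1$-cone condition gives, for each $x\in M$, a $1$-dimensional subspace $E(x)\subset T_xM$ varying continuously in $x$, along with the associated convex cones
\[
C_x = \{(u,v)\in E(x)\oplus E(x)^{\perp} : \|v\|\leq \|u\|\},
\]
and the hypothesis $(D_xT) C_x \subset C_{T(x)}^o$ is precisely the statement that $(C_x)_{x\in M}$ is a $1$-dimensional invariant cone field in the sense used just before Proposition \ref{prop:add}.

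The second step is to apply Proposition \ref{prop:add}. Although that proposition is phrased for cocycles $\mathcal{A}:X\to GL(k,\R)$, its proof is entirely local in $x$ and the only global ingredients are compactness of $X$, continuity of $x\mapsto \mathcal{A}(x)$ and $x\mapsto C_x$, and the elementary properties of the Hilbert metric on convex cones. All the uniform constants extracted there, namely the Hilbert diameter bound $K_1$ on $D_x$, the comparison constant $K_2$ between the Hilbert and projective metrics, the contraction rate $\lambda<1$ coming from Theorem \ref{contraction}, the Lipschitz constant $K_3$ of the maps $\gamma_x$, and the uniform ball radius $r$, depend only on these features; they carry over verbatim when the trivial bundle $X\times\R^k$ is replaced by the tangent bundle $TM$, because $M$ is compact and the Riemannian data vary continuously. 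The conclusion of Proposition \ref{prop:add} then yields the desired $\kappa>0$ with
\[
\|\mathcal{A}^{m+n}(x)\|\geq \kappa \|\mathcal{A}^m(x)\|\cdot \|\mathcal{A}^n(T^m(x))\|
\]
for every $x\in M$ and every $m,n>0$.

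The only conceptual obstacle is the routine translation from the $GL(k,\R)$-valued cocycle setting to the bundle setting in which $D_xT$ acts between distinct fibers $T_xM\to T_{T(x)}M$. I expect no genuine analytic difficulty here: one may either work intrinsically with the Riemannian norm on each fiber (as above), or pull everything back to $\R^k$ using a finite atlas of orthonormal frames, where compactness controls the comparison constants between adjacent trivializations. Either route reduces the corollary to a direct citation of Proposition \ref{prop:add}.
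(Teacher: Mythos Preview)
Your proposal is correct and matches the paper's own proof, which simply states ``It follows from the proof of Proposition \ref{prop:add}.'' Your additional discussion of the bundle-versus-trivial-bundle issue and the extraction of uniform constants via compactness is a reasonable elaboration of what the paper leaves implicit.
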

\begin{proof}
It follows from the proof of Proposition \ref{prop:add}.
\end{proof}

One needs to be careful that quasi-multiplicativity is not equivalent of almost additivity. For instance, let  $T:\{0,1\}^{\Z}\rightarrow \{0,1\}^{\Z}$ be a shift map. We define a linear cocycle $(T, \mathcal{A})$ such that
\begin{equation}\label{example}
A_{0}=\left[\begin{array}{ccccc} 2 & 0\\ 0 & \frac{1}{2}
\end{array} \right],  \hspace{0.2cm} A_{1}=R_{\theta},
\end{equation}
where $\theta$ is an irrational angle. It is easy to see that the locally constant cocycle $(T, \mathcal{A})$ is strongly irreducible. Feng \cite[Proposition 2.8]{F} showed that the irreducible matrix cocycles are quasi-multiplicative.

\subsection{The proof of Theorem D}

We say that $\hat{\rho}(\mathcal{A})$ is  the \textit{upper joint spectral radius} of $\mathcal{A}: X \rightarrow GL(k, \R)$ if
\[ \log \hat{\rho}(\mathcal{A})=\beta(\mathcal{A}).\]

 Similarly, we say that $\check{\rho}(\mathcal{A})$ is the \textit{lower joint spectral radius} of $\mathcal{A}: X \rightarrow GL(k, \R)$ if 
\[ \log\check{\rho}(\mathcal{A})=\alpha(\mathcal{A}).\]

Assume that $f:X\rightarrow X$ is a convex continuous function on the compact metric space $X$. We have $\overline{\partial f(\R)}=\partial f(\R) \cup  \{f^{'}(\infty)\}\cup  \{f^{'}(-\infty)\}$,  where $\partial f(\R) $ is defined as in (\ref{dense222}).

\begin{thm}\label{app}
Let $(X, T)$ be a TDS such that the entropy
map $\mu \mapsto h_{\mu}(T)$ is upper semi-continuous and $h_{top}(T)<\infty$. Suppose that $\mathcal{A}:X\rightarrow GL(k, \R)$ is a matrix cocycle over the TDS $(X, T)$. Assume that $(C_x)_{x\in X}$ is a 1-dimensional invariant cone field on $X$. Then $\alpha(\mathcal{A})$ can be approximated by 
the Lyapunov exponents of the equilibrium measures for the almost additive potential $t\Phi_{\mathcal{A}}$, where $t\in \R$. Moreover, a Lyapunov minimizing measure for $\mathcal{A}$ exists.
\end{thm}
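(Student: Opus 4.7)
The plan is to turn the cone condition into almost additivity of $\Phi_{\mathcal{A}}$ via Proposition \ref{prop:add}, then run the almost additive thermodynamic formalism to produce equilibrium measures $\mu_t$ whose Lyapunov exponents decrease to $\alpha(\mathcal{A})$ as $t\to -\infty$, and finally extract a minimizing measure by a weak-$\ast$ compactness argument.

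First, by Proposition \ref{prop:add} the 1-dimensional invariant cone field $(C_x)_{x\in X}$ yields a constant $\kappa>0$ with $\|\mathcal{A}^{m+n}(x)\|\geq \kappa\,\|\mathcal{A}^m(x)\|\,\|\mathcal{A}^n(T^m(x))\|$ for all $m,n\in\N$, $x\in X$. Combined with the submultiplicativity of the operator norm, this shows that $\Phi_{\mathcal{A}}=\{\log\|\mathcal{A}^n\|\}_{n=1}^{\infty}$ is almost additive, hence in particular quasi-multiplicative. Using quasi-multiplicativity together with the upper semi-continuity of the entropy map and $h_{\rm top}(T)<\infty$, the argument of Theorem \ref{convex11} (which only needs quasi-multiplicativity, not typicality) applies and gives that $P_{\Phi_{\mathcal{A}}}$ is a continuous convex function on $\R$ with
\[
\alpha(\mathcal{A})\;=\;P'_{\Phi_{\mathcal{A}}}(-\infty)\;=\;\lim_{t\to -\infty}\frac{P_{\Phi_{\mathcal{A}}}(t)}{t}.
\]

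Next, for each $t\in \R$ the potential $t\Phi_{\mathcal{A}}$ is still almost additive. The subadditive variational principle (Theorem \ref{var1}) attains its supremum by upper semi-continuity of the entropy map, so $\Eq(\Phi_{\mathcal{A}},t)\neq \emptyset$; pick $\mu_t\in\Eq(\Phi_{\mathcal{A}},t)$. By Theorem \ref{derivative:aa} (the almost additive extension of Proposition \ref{eq11} removing the restriction $t\in \R_{+}$) we have $\chi(\mu_t,\Phi_{\mathcal{A}})\in \partial P_{\Phi_{\mathcal{A}}}(t)$. A standard fact about convex functions on $\R$ says that as $t\to -\infty$ the whole subdifferential $\partial P_{\Phi_{\mathcal{A}}}(t)$ collapses to the left-end slope $P'_{\Phi_{\mathcal{A}}}(-\infty)$; consequently
\[
\chi(\mu_t,\mathcal{A})\;\longrightarrow\;\alpha(\mathcal{A})\qquad \text{as}\quad t\to -\infty,
\]
which is exactly the desired approximation statement.

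Finally, for the existence of a minimizing measure I would invoke weak-$\ast$ compactness of $\mathcal{M}(X,T)$: choose a sequence $t_n\to -\infty$ along which $\mu_{t_n}\to \mu_\ast$ weak-$\ast$. By Lemma \ref{AAP}, the map $\mu\mapsto \chi(\mu,\Phi_{\mathcal{A}})$ is continuous on $\mathcal{M}(X,T)$ for almost additive potentials, so
\[
\chi(\mu_\ast,\mathcal{A})\;=\;\lim_{n\to\infty}\chi(\mu_{t_n},\mathcal{A})\;=\;\alpha(\mathcal{A}),
\]
and $\mu_\ast$ is a minimizing measure. The main obstacle is the negative-parameter behaviour: one must ensure that both the subdifferential characterization (Proposition \ref{eq11}) and the convex-function machinery (Theorem \ref{convex11}) remain valid for $t\in \R$ rather than $t\in \R_{+}$. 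Both go through precisely because almost additivity implies quasi-multiplicativity and lets one dispense with any typicality hypothesis, which is exactly what the cone condition buys us through Proposition \ref{prop:add}.
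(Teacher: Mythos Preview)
Your proposal is correct and follows essentially the same route as the paper's own proof: both use Proposition \ref{prop:add} to obtain almost additivity, invoke Theorem \ref{convex11} for convexity of $P_{\Phi_{\mathcal{A}}}$ and the identification $\alpha(\mathcal{A})=P'_{\Phi_{\mathcal{A}}}(-\infty)$, apply Theorem \ref{derivative:aa} to link equilibrium measures with (sub)derivatives of the pressure, and conclude via weak-$\ast$ compactness and Lemma \ref{AAP}. The only cosmetic difference is that the paper selects a sequence $t_j\to -\infty$ of differentiability points of $P_{\Phi_{\mathcal{A}}}$ rather than working with the full subdifferential as you do.
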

\begin{proof}
We know that $\mathcal{A}$ is almost multiplicative by Proposition \ref{prop:add}. 
Let $\alpha:=\alpha(\mathcal{A})=P_{\Phi_{\mathcal{A}}}^{'}(-\infty)$ (see Theorem \ref{convex11}). 

By the convexity of $P_{\Phi_{\mathcal{A}}}()$, there exists a sequence $(t_{j} )$ such that $ P_{\Phi_{\mathcal{A}}}^{'}(t_j )=:
\alpha_{j}$ exists for every $j\in \N$ and $\alpha_{j}\rightarrow \alpha$ as $j \rightarrow \infty$. There exists $\mu_{j} \in \Eq(\Phi_{\mathcal{A}}, t_{j} )$
such that $\chi(\mu_{j}, \Phi)=\alpha_{j}$ for all $j$, by Theorem \ref{derivative:aa}. Extract a subsequence if necessary so that $\mu_{j} \to \mu$ (weak* topology) \footnote{$\Eq(
\Phi_{\mathcal{A}}, t)$ is compact in weak* topology.} as $j\rightarrow \infty$. By Lemma \ref{AAP}, we have
\[\chi(\mu_{j}, \mathcal{A}) \rightarrow \chi(\mu, \mathcal{A})=\alpha.\]

Furthermore, our proof shows that a Lyapunov minimizing measure exists.

\end{proof}
Now, we can show the continuity of the minimal Lyapunov exponent.

 \begin{proof}[Proof of Theorem D]
According to Theorem \ref{app}, $\alpha(\mathcal{A})$ can be approximated by 
 Lyapunov exponents of equilibrium measures for the almost additive potential $t\Phi_{\mathcal{A}}$, where $t\in \R$. Thus, it is enough to show that \begin{equation}\label{r88}
 \chi(\mu_{n}, \mathcal{A}_{n})\rightarrow \chi(\mu, \mathcal{A}),
\end{equation}  
 where $\mu, \mu_{n}$ are the equilibrium measures.
 
By Proposition \ref{prop:add}, $\mathcal{A}$ is almost multiplicative and $\Phi_{\mathcal{A}}$ has bounded distortion. Hence, there exist a unique equilibrium measure for the almost additive potential $t\Phi_{\mathcal{A}}$ by Theorem \ref{almost-unique}. Therefore, (\ref{r88}) follows from  Lemma \ref{AAP}.
 \end{proof}

 Domination can be characterized in terms of the existence of invariant cone fields for derivative cocycles (\cite[Theorem 2.6]{CP}). This fact shows that 1-domination implies that $\mathcal{A}$ is almost multiplicative. Hence, one can prove Theorem D for fiber bunched cocycles \footnote{See \cite[Lemma 3.10]{P}.} under 1-domination assumption.

  It is possible to obtain the generalization of Theorem \ref{app} to the joint spectrum of all Lyapunov exponents. One can also obtain the continuity of the lower joint spectral radius for all Lyapunov exponents.

\bibliography{ref}

\end{document}